\newtheorem{prethm}{{\bf Theorem}}[section]
\newenvironment{thm}{\begin{prethm}{\hspace{-0.5
em}{\bf.}}}{\end{prethm}}
\newtheorem{prepro}{{\bf Theorem}}
\newtheorem{precor}[prethm]{{\bf Corollary}}
\newenvironment{cor}{\begin{precor}{\hspace{-0.5
em}{\bf.}}}{\end{precor}}
\newtheorem{preconj}[prethm]{{\bf Conjecture}}
\newenvironment{conj}{\begin{preconj}{\hspace{-0.5
em}{\bf.}}}{\end{preconj}}
\newtheorem{preremark}[prethm]{{\bf Remark}}
\newenvironment{remark}{\begin{preremark}\em{\hspace{-0.5
em}{\bf.}}}{\end{preremark}}
\newtheorem{prelem}[prethm]{{\bf Lemma}}
\newenvironment{lem}{\begin{prelem}{\hspace{-0.5
em}{\bf.}}}{\end{prelem}}
\newtheorem{preque}[prethm]{{\bf Question}}
\newtheorem{preobserv}[prethm]{{\bf Observation}}
\newtheorem{predef}[prethm]{{\bf Definition}}
\newtheorem{preproposition}[prethm]{{\bf Proposition}}
\newtheorem{preproof}{{\bf Proof.}}
\newtheorem{preprooff}{{\bf Proof}}
\newenvironment{proof}[1]{\begin{preproof}{\rm
#1}\hfill{$\Box$}}{\end{preproof}}
\newtheorem{preproofs}{{\bf The second proof of }}
\newtheorem{preprooft}{{\bf Third proof of }}
\newtheorem{preproofF}{{\bf Proof of}}
\title{\bf\Large 
Modulo factors with bounded degrees
}
\author{{\normalsize{\sc Morteza Hasanvand${}$} }\vspace{3mm}
\\{\footnotesize{${}$\it Department of Mathematical
 Sciences, Sharif
University of Technology, Tehran, Iran}}
{\footnotesize{}}\\{\footnotesize{ $\mathsf{morteza.hasanvand@alum.sharif.edu }$ }}}
\date{}
\begin{document}
\maketitle
\begin{abstract}{
Let $G$ be a bipartite graph with bipartition $(X,Y)$, let $k$ be a positive integer, and let $f:V(G)\rightarrow \{-1,\ldots, k-2\}$ be a mapping with $\sum_{v\in X}f(v) \stackrel{k}{\equiv}\sum_{v\in Y}f(v)$. In this paper, we show that if $G$ is essentially $(3k-3)$-edge-connected and for each vertex $v$, $d_G(v)\ge 2k-1+f(v)$, then it admits a factor $H$ such that for each vertex $v$, $d_H(v)\stackrel{k}{\equiv} f(v)$, and $$\lfloor\frac{d_G(v)}{2}\rfloor-(k-1)\le d_{H}(v)\le \lceil\frac{d_G(v)}{2}\rceil+k-1.$$ Next, we generalize this result to general graphs and derive sufficient conditions for a highly edge-connected general graph $G$ to have a factor $H$ such that for each vertex $v$, $d_H(v)\in \{f(v),f(v)+k\}$. Finally, we show that every $(4k-1)$-edge-connected essentially $(6k-7)$-edge-connected graph admits a bipartite factor whose degrees are positive and divisible by $k$. 
\\\\
\noindent {\small {\it Keywords}: Modulo factor; edge-connected; partition-connected; bipartite graph; vertex degree. }} {\small
}
\end{abstract}
%
%
%
%
\section{Introduction}
In this article, graphs may have loops and multiple edges.
Let $G$ be a graph. The vertex set, the edge set, and the minimum degree of the vertices of $G$ are denoted by $V(G)$, $E(G)$, and $\delta(G)$, respectively.
We denote by $d_G(v)$ the degree of a vertex $v$ in the graph $G$.
If $G$ has an orientation, the out-degree and in-degree of $v$ are denoted by $d_G^+(v)$ and $d_G^-(v)$.
For a vertex set $A$ of $G$ with at least two vertices, the number of edges of $G$ with exactly one end in $A$ is denoted by $d_G(A)$.
Also, we denote by $e_G(A)$ the number of edges with both ends in $A$ 
and denote by $d_G(A,B)$ the number of edges with one end in $A$ and one end in $B$, where $B$ is a vertex set.
We denote by $G[A]$ the induced subgraph of $G$ with the vertex set $A$ containing
precisely those edges of $G$ whose ends lie in $A$, and denote by $G[A, B]$ the induced bipartite factor of $G$
with the bipartition $(A, B)$.
A graph $G$ is called 
{\it $m$-tree-connected} if it contains $m$ edge-disjoint spanning trees. 
Note that by the result of Nash-Williams~\cite{Nash-Williams-1961} and Tutte~\cite{Tutte-1961} every $2m$-edge-connected graph is $m$-tree-connected. In fact, the result of them says that a loopless graph $G$ is $m$-tree-connected if and only if 
$e_G(P)\ge m(|P|-1)$ for every partition $P$ of $V(G)$, where $e_G(P)$ denotes the number of edges of $G$ joining different parts of $P$.
A graph is termed 
{\it essentially $\lambda$-edge-connected},
 if all edges of any edge cut of size strictly less than $\lambda$ are incident with a common vertex.
Let $k$ be a positive integer. 
The cyclic group of order $k$ is denoted by $\mathbb{Z}_k$.
For two integers $x$ and $y$, we say that $x\stackrel{k}{\equiv}y$, if $x-y$ is divisible by $k$.
For any integer $n$, we denote by $[n]_k$ the unique integer $n_0$ such that $n_0\stackrel{k}{\equiv}n$ and 
$n_0\in \{-1,0,\ldots, k-2\}$.
An orientation of $G$ is said to be 
{\it $p$-orientation} if for each vertex $v$, $d_G^+(v)\stackrel{k}{\equiv}p(v) $,
 where $p:V(G)\rightarrow \mathbb{Z}_k$ is a mapping.
Likewise, an {\it $f$-factor} refers to a spanning subgraph $H$ such that for each vertex $v$, 
$d_H(v)\stackrel{k}{\equiv}f(v) $, where $f:V(G)\rightarrow \mathbb{Z}_k$.
For a graph $G$, we say that a mapping $f:V(G)\rightarrow \mathbb{Z}_k$ is 
{\it compatible with $G$} with respect to a bipartition $X,Y$ of $V(G)$ if 
$\sum_{v\in X} f(v)-2x\stackrel{k}{\equiv} \sum_{v\in Y}f(v)-2y$ for two integers $x$ with $0\le x\le e_G(X)$ and $0\le y\le e_G(Y)$. Note that one of $x$ and $y$ can be zero.
Likewise, we say that a mapping $f$ is 
{\it compatible with $G$} if it is compatible with $G$ with respect to every bipartition $X,Y$ of $V(G)$.
We will show that when $G$ is a $(2k-3)$-edge-connected bipartite graph with bipartition $(X,Y)$, $f$ is compatible with $G$ if and only if $f$ is compatible with respect to the bipartition $X,Y$ of $V(G)$; see Theorem~\ref{thm:compatible:sufficient}.
It is easy to see that if $G$ has an $f$-factor, then $f$ must be compatible with $G$.
The {\it bipartite index $bi(G)$} of a graph $G$ is the smallest number of all $|E(G)\setminus E(H)|$ taken over all bipartite factors~$H$.
A {\it modulo $k$-regular} graph refers to a graph whose degrees are positive and divisible by $k$. 
A proper coloring of a graph $G$ is to assign colors to vertices such that any two adjacent vertices receive different colors. The {\it chromatic number} $\chi(G)$ refers to the minimum number of necessary colors among all proper colorings.
Throughout this article, all variables $m$ are nonnegative integers and all variables $k$ are positive integers.

In 2008 Shirazi and Verstra{\"e}te~\cite{Shirazi-Verstraete-2008} introduced the concept of modulo factors and established a sufficient condition for the existence of $f$-factors modulo $k$ in graphs with average degree $2k-2$.

\begin{thm}{\rm(\cite{Shirazi-Verstraete-2008})}\label{thm:intro:Shirazi-Verstraete-2008}
{Let $G$ be a graph, let $k$ be a prime number, and let $f:V(G)\rightarrow \mathbb{Z}_k$ be an arbitrary mapping. 
If the number of orientations with out-degrees $k-1$ is not divisible by $k$, then $G$ has an $f$-factor.
}\end{thm}

In 2014 Thomassen formulated the following result about the existence of $f$-factors modulo $k$ in $(3k-3)$-edge-connected bipartite graphs. In this paper, we improve Theorem~\ref{Inro:factor:thm:Thomassen} by giving a sharp bound on degrees as mentioned in the abstract. Our result is based on a new improvement of the main result in~\cite{Lovasz-Thomassen-Wu-Zhang-2013} about the existence of modulo orientations with bounded out-degrees~\cite{ModuloBounded}.
\begin{thm}{\rm(\cite{Thomassen-2014})}\label{Inro:factor:thm:Thomassen}
{Let $G$ be a bipartite graph with bipartition $(X,Y)$, let $k$ be an integer, $k\ge 2$, and let $f:V(G)\rightarrow \mathbb{Z}_k$ be a mapping with $\sum_{v\in X}f(v) \stackrel{k}{\equiv}\sum_{v\in Y}f(v)$. 
If $G$ is $(3k-3)$-edge-connected, then it has an $f$-factor.
}\end{thm}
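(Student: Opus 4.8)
The plan is to translate the existence of an $f$-factor of the bipartite graph $G$ into the existence of an orientation of $G$ with prescribed out-degrees modulo $k$, and then to apply the modulo-$k$ orientation theorem for highly edge-connected graphs. Since $k\ge 2$, the hypothesis makes $G$ at least $3$-edge-connected, hence connected; we may therefore assume $|V(G)|\ge 2$ (the case $|V(G)|=1$ being trivial), so that $G$ has a unique bipartition $(X,Y)$. As $G$ is bipartite we have $e_G(X)=e_G(Y)=0$, and consequently the hypothesis that $f$ is compatible with $G$ --- in particular with respect to $(X,Y)$ --- forces $\sum_{v\in X}f(v)\stackrel{k}{\equiv}\sum_{v\in Y}f(v)$. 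This single congruence is the only arithmetic input the argument will need.

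Next I would set up a dictionary between the spanning subgraphs of $G$ and the orientations of $G$: to a spanning subgraph $H$ associate the orientation obtained by directing every edge of $H$ from its end in $X$ towards its end in $Y$, and every edge of $E(G)\setminus E(H)$ from $Y$ to $X$. This is a bijection, and under it $d_H(x)=d_G^+(x)$ for every $x\in X$, while $d_H(y)=d_G(y)-d_G^+(y)$ for every $y\in Y$. Hence $H$ is an $f$-factor precisely when the corresponding orientation is a $p$-orientation, where $p:V(G)\rightarrow Z_k$ is defined by $p(x)=f(x)$ for $x\in X$ and $p(y)=d_G(y)-f(y)$ for $y\in Y$. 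Since $\sum_{v\in V(G)}d_G^+(v)=|E(G)|$ for any orientation, a $p$-orientation can exist only if $\sum_{v}p(v)\stackrel{k}{\equiv}|E(G)|$; and because $\sum_{y\in Y}d_G(y)=|E(G)|$ for bipartite $G$, one has
$$\sum_{v\in V(G)}p(v)\ \stackrel{k}{\equiv}\ \Big(\sum_{x\in X}f(x)-\sum_{y\in Y}f(y)\Big)+|E(G)|\ \stackrel{k}{\equiv}\ |E(G)|$$
by the congruence established above, so $p$ does satisfy the necessary feasibility condition, and it only remains to realize it.

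This last step is where the $(3k-3)$-edge-connectivity is spent: I would invoke the modulo-$k$ orientation theorem, namely that every $(3k-3)$-edge-connected graph admits, for every $p:V(G)\rightarrow Z_k$ with $\sum_{v}p(v)\stackrel{k}{\equiv}|E(G)|$, an orientation in which $d_G^+(v)\stackrel{k}{\equiv}p(v)$ at each vertex; for $k=3$ this contains the assertion that every $6$-edge-connected graph has a nowhere-zero $3$-flow. A proof of it runs by induction on $|E(G)|$: one locates a contractible configuration --- say a pair of vertices joined by at least $k$ parallel edges, or a short structure around a vertex of degree not much larger than $3k-3$ --- contracts it, checks (after possibly lifting a few edges) that the resulting minor is still $(3k-3)$-edge-connected, applies the inductive hypothesis there, and then extends the orientation over the contracted part by rerouting along paths so as to absorb the residual deficiency. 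Applying this theorem to our $G$ and $p$ yields a $p$-orientation, and the spanning subgraph $H$ corresponding to it under the dictionary is an $f$-factor of $G$.

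The main obstacle is this orientation theorem itself, in particular getting the edge-connectivity requirement down to $3k-3$ rather than to a substantially larger function of $k$; the even-$k$ case is a further delicate point, since there one cannot simply deduce a $p$-orientation from $Z_k$-group-connectivity --- that passage loses information modulo $2$ --- so the contraction argument must be carried out with $p$-orientations directly. By contrast, everything else is routine: the bijection with orientations, the two degree identities, the feasibility computation, and the collapse of the compatibility hypothesis to the single congruence $\sum_{v\in X}f(v)\stackrel{k}{\equiv}\sum_{v\in Y}f(v)$ (immediate because $G$ has no edge inside $X$ or inside $Y$).
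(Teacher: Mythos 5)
Your proposal is correct and takes essentially the same route as the paper: the paper derives this statement (and its strengthened bounded-degree version) via exactly your one-to-one correspondence between factors and orientations of a bipartite graph, setting $p(x)=f(x)$ on $X$ and $p(y)=d_G(y)-f(y)$ on $Y$, verifying $\sum_{v}p(v)\stackrel{k}{\equiv}|E(G)|$, and then invoking the Lov\'asz--Thomassen--Wu--Zhang modulo $k$-orientation theorem in its out-degree form (which, as you note, is the formulation needed for even $k$). Your reduction of compatibility to the single congruence $\sum_{v\in X}f(v)\stackrel{k}{\equiv}\sum_{v\in Y}f(v)$ likewise matches the paper's treatment.
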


In 2016 Thomassen, Wu, and Zhang~\cite{Thomassen-Wu-Zhang-2016} generalized Theorem~\ref{Inro:factor:thm:Thomassen} to $(6k-7)$-edge-connected graphs with bipartite index at least $k-1$ provided that $k$ is odd. In Subsection~\ref{subsec:General graphs}, we extend their result to even integers $k$ and improve it to the following bounded-degree version. 

\begin{thm}\label{intro:thm:non-bipartite:6k-7}
{Let $G$ be a graph, let $k$ be a positive integer, and let $f:V(G)\rightarrow \mathbb{Z}_k$ be a mapping.
 Assume that $bi(G)\ge k-1$ and $(k-1)\sum_{v\in V(G)}f(v)$ is even.
If $G$ is $(6k-7)$-edge-connected, then it has an $f$-factor $H$ such that for each vertex~$v$,
$$\lfloor \frac{d_G(v)}{2}\rfloor-(k-1) \le d_H(v) \le \lfloor \frac{d_G(v)}{2}\rfloor+k.$$
}\end{thm}

As an application, we derive a sufficient condition for a highly edge-connected graph $G$ to have a factor whose degrees fall in predetermined integer sets as the following corollary. In Subsection~\ref{subsec:Improving-degree-bounds}, we 
 refine degree bounds for graphs with higher edge-connectivity.
\begin{cor}
{Let $G$ be a graph, let $k$ be a positive integer, and let $f$ be a positive integer-valued function on $V(G)$ satisfying
 $f(v) \le \frac{1}{2}d_G(v)< f(v)+k$ for each vertex $v$. 
Assume that $bi(G)\ge k-1$ and $(k-1)\sum_{v\in V(G)}f(v)$ is even.
If $G$ is $(6k-7)$-edge-connected, then it has a factor $H$ such that for each vertex $v$,
$$d_H(v)\in \{f(v),f(v)+k\}.$$
}\end{cor}

In~\cite{Thomassen-2014}, Thomassen established a sufficient edge-connectivity condition for the existence of a special type of bipartite modulo $k$-regular factors by concluding the following result from Theorem~\ref{Inro:factor:thm:Thomassen}.
In Section~\ref{sec:modulo regular}, we push down the needed edge-connectivity to $10k-3$ in essentially $(12k-7)$-edge-connected graphs. 
Moreover, we show that every $(4k-1)$-edge-connected essentially $(6k-7)$-edge-connected graph admits a bipartite modulo $k$-regular factor.
\begin{thm}{\rm (\cite{Thomassen-2014})}\label{thm:intro:2k-7}
{Every $(12k-7)$-edge-connected graph of even order has a bipartite modulo $k$-regular factor whose degrees are not divisible by $2k$. 
}\end{thm}

In 1984 Alon, Friedland, and Kalai proposed the following elegant conjecture and confirmed it for the case that $k$ is a prime power.
\begin{conj}{\rm (\cite{Alon-Friedland-Kalai-1984})}\label{intro:conj}
{Let $G$ be a loopless graph of order $n$ and let $k$ be a positive integer.
If $|E(G)|> (k-1)n$, then $G$ admits a modulo $k$-regular subgraph.
}\end{conj}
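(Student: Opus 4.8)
The plan is to bypass the edge-connectivity machinery of the previous sections and attack the conjecture through the polynomial method, since any route that first extracts a highly edge-connected subgraph from the density hypothesis loses a multiplicative constant and therefore cannot reach the sharp threshold $(k-1)n$. I would encode a candidate subgraph by its edge indicator $x=(x_e)_{e\in E(G)}\in\{0,1\}^{E(G)}$ and record that the support of any nonzero $x$ satisfying $\sum_{e\ni v}x_e\equiv 0\pmod{k}$ at every vertex $v$ is exactly a modulo $k$-regular subgraph: each vertex meeting a chosen edge then has positive degree divisible by $k$. Thus the task reduces to producing a nonzero $\{0,1\}$-solution of a system of $n$ congruences once $|E(G)|>(k-1)n$.

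First I would settle the prime case $k=p$ using the Combinatorial Nullstellensatz over the field $Z_p$. Consider
\[
F(x)=\prod_{v\in V(G)}\Bigl(1-\bigl(\textstyle\sum_{e\ni v}x_e\bigr)^{p-1}\Bigr)-\prod_{e\in E(G)}(1-x_e).
\]
On $\{0,1\}^{E(G)}$ the first product equals $1$ exactly when every vertex sum is $\equiv 0\pmod p$, while the second equals $1$ only at $x=0$; hence $F(x)\neq 0$ precisely at the nonzero modulo $p$-regular subgraphs. The first product has degree $n(p-1)$, so as soon as $|E(G)|>n(p-1)=(k-1)n$ the top-degree monomial $\prod_{e}x_e$ comes only from the second product, where it carries the nonzero coefficient $(-1)^{|E(G)|+1}$. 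Applying the Combinatorial Nullstellensatz with $S_e=\{0,1\}$ then yields a point of $\{0,1\}^{E(G)}$ at which $F$ is nonzero, that is, a modulo $k$-regular subgraph. This reproduces the sharp bound for prime $k$, and the same scheme extends to prime powers $k=p^{a}$ by running the count over $Z_p$ while tracking the $p$-adic valuation of each vertex sum, which is the argument of Alon, Friedland, and Kalai~\cite{Alon-Friedland-Kalai-1984}.

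For composite $k$ I would factor $k=\prod_i p_i^{a_i}$ and invoke the Chinese Remainder Theorem: a subgraph is modulo $k$-regular exactly when it is simultaneously modulo $p_i^{a_i}$-regular for every $i$. The hope is to feed the single hypothesis $|E(G)|>(k-1)n$ into each prime-power subsystem and then glue the resulting solutions together. The hard part, and the reason the conjecture remains open, lies precisely here: the prime-power arguments certify a nonzero $\{0,1\}$-solution of each subsystem separately, but they supply no mechanism to force one common edge set solving all of them at once, and over $Z_k$ with $k$ composite there is no field on which to run a single Nullstellensatz count. Closing this gap would require either a genuinely combinatorial substitute for the Nullstellensatz valid over an arbitrary $Z_k$, or a constructive way to intersect the prime-power solution sets while respecting the common degree bound $(k-1)n$; lacking such an idea, only the lossy density-to-connectivity reduction survives for all $k$, and it stops short of the conjectured constant.
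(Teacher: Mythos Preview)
The statement you are addressing is Conjecture~\ref{intro:conj}, and the paper does not contain a proof of it: it is stated explicitly as an open conjecture of Alon, Friedland, and Kalai, with the remark that they settled it only for prime powers. There is therefore no ``paper's own proof'' to compare against.

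Your proposal is an accurate summary of the known state of affairs rather than a proof. The Nullstellensatz argument you sketch for primes, and its $p$-adic refinement for prime powers, is exactly the Alon--Friedland--Kalai argument cited in the paper as Theorem~\ref{thm:AFK:k-subgraph}; you do not add anything new there. For composite $k$ you yourself identify the genuine obstruction: the polynomial method produces a separate nonzero solution for each prime-power factor of $k$, but the Chinese Remainder Theorem does not let you intersect these solution sets into a single common subgraph, and there is no field structure on $Z_k$ to run a unified Nullstellensatz. Your final paragraph is an honest admission that the argument stalls, not a resolution. So the proposal does not prove the conjecture, and you correctly diagnose why; but since the paper never claims to prove it either, there is no discrepancy to report beyond noting that what you have written is a survey of the difficulty rather than a proof.
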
 

Recently, Botler, Colucci, and Kohayakawa (2020)~\cite[Lemma 3]{Botler-Colucci-Kohayakawa} proved a weaker version of this conjecture by replacing the lower bound by $(24k-12)n$ based on Theorem~\ref{thm:intro:2k-7}, and applied it for their purpose. In this paper, we introduce a simpler technique to improve their result by replacing a better lower bound less than $4(k-1)n$.
\section{Basic tools and preliminary results}
\subsection{Tools: Orientations modulo $k$}
For making some results on the existence of modulo factors with bounded degree, we need to apply some results on the existence of modulo orientations with bounded out-degrees.
\begin{thm}{\rm (\cite{ModuloBounded}, see also Theorem 3.1 in \cite{Lovasz-Thomassen-Wu-Zhang-2013})}\label{thm:new-lower-bound}
{Let $G$ be a loopless graph with $z_0\in V(G)$, let $k$ be an integer, $k\ge 3$, and let $p:V(G)\rightarrow \mathbb{Z}_k$ be a mapping with
$|E(G)|\stackrel{k}{\equiv} \sum_{v\in V(G)}p(v)$. Let $D_{z_0}$ 
be an orientation of the set of edges incident with $z_0$.
 Assume that $G$ is essentially $(3k-3)$-edge-connected, and
\begin{enumerate}{

\item 
 $d_G(z_0)\le 2k-1+p(z_0)$, and the edges incident with $z_0$ are directed such that $d^+_G(z_0)\stackrel{k}{\equiv}p(z_0)$.

\item
$d_G(v)\ge 2k-1+[p(v)]_k$, for each $v\in V(G)\setminus \{ z_0\}$.
}\end{enumerate}

Then the orientation $D_{z_0}$ can be extended to a $p$-orientation $D$ of $G$ such that for each vertex~$v$,
$$\lfloor \frac{d_G(v)}{2}\rfloor - (k-1)\le\; d^+_G(v) \; \le \lceil \frac{d_G(v)}{2}\rceil+(k-1).
$$
}\end{thm}
\begin{cor}\label{cor:new-lower-bound}
{Let $G$ be a loopless graph with $z_0\in V(G)$, let $k$ be an integer, $k\ge 3$, and let $p:V(G)\rightarrow \mathbb{Z}_k$ be a mapping with
$|E(G)|\stackrel{k}{\equiv} \sum_{v\in V(G)}p(v)$. Let $D_{z_0}$ 
be an orientation of the set of edges incident with $z_0$. 
Let $s$ and $s_0$ be two nonnegative integer-valued functions on $V(G)$ satisfying $s_0(v)+s(v)< 2k$ for each vertex $v$, and $s(z_0)=s_0(z_0)=0$.
 Assume that $G$ is essentially $(3k-3)$-edge-connected, and
\begin{enumerate}{

\item 
 $d_G(z_0)+\sum_{v\in V(G)}\max\{s(v), s_0(v)\}< 2k$, and the edges incident with $z_0$ are directed such that $d^+_G(z_0)\stackrel{k}{\equiv}p(z_0)$.

\item
$d_G(v)\ge 2k-1+[p(v)]_k$, for each $v\in V(G)\setminus \{ z_0\}$.
}\end{enumerate}

Then the orientation $D_{z_0}$ can be extended to a $p$-orientation $D$ of $G$ such that for each vertex~$v$,
$$\lfloor \frac{d_G(v)+s(v)}{2}\rfloor - (k-1)\le\; d^+_G(v) \; \le \lceil \frac{d_G(v)-s_0(v)}{2}\rceil+(k-1).
$$
}\end{cor}
\begin{proof}
{Define $S $ to be the set of all $v\in V(G)\setminus \{z_0\}$ such that there is an integer $q(v)$ satisfying 
$q(v)\stackrel{k}{\equiv}p(v)$ and $d_G(v)/2 < q(v)\le d_G(v)/2+s(v)/2$. 
Likewise, define $S_0$ to be the set of all $v\in V(G)\setminus \{z_0\}$ such that 
there is an integer $q(v)$ satisfying $q(v)\stackrel{k}{\equiv}p(v)$ and $d_G(v)/2 -s_0(v)/2 \le q(v)< d_G(v)/2$.
Since $s(v)+s_0(v)< 2k$, we must have $S\cap S_0=\emptyset$.
Let $G'$ be the graph obtained from $G$ by 
adding $|2q(v)-d_G(v)|$ new parallel edges $vz_0$ for all $v\in S\cup S_0$.
We orient these new edges $vz_0$ toward $v$ when $v\in S$, and orient them toward $z_0$ when $v\in S_0$.
Define $p'(v)=p(v)$ for each $v\in V(G)\setminus (S_0\cup \{z_0\})$, 
$p'(v)=p(v)+|2q(v)-d_G(v)|$ for each $v\in S_0$, and 
$p'(z_0)=p(z_0)+\sum_{v\in S}|2q(v)-d_G(v)|$.
It is easy to check that $|E(G')|\stackrel{k}{\equiv}\sum_{v\in V(G')}p'(v)$, and
 $d_{G'}(v)\ge 2k-1+[p'(v)]_k$ for each $v\in V(G)\setminus\{z_0\}$, and also $d_{G'}(z_0)=d_{G}(z_0)+
\sum_{v\in S\cup S_0}|2q(v)-d_G(v)|\le d_{G}(z_0)+\sum_{v\in V(G)}\max\{s(v), s_0(v)\}\le
 2k-1\le 2k-1+p'(z_0)$.
Therefore, by Theorem~\ref{thm:new-lower-bound}, the graph $G'$ has a $p'$-orientation modulo $k$ such that for each vertex $v$, 
$|d^+_{G'}(v)-d_{G'}(v)/2|<k$. 
Since for each $v\in S\cup S_0$, $p'(v)\stackrel{k}{\equiv}d_{G'}(v)/2$, we must have $d^+_{G'}(v)=d_{G'}(v)/2$.
Thus this orientation induces a $p$-orientation for $G$ such that for each $v\in S\cup S_0$, $d^+_{G}(v)=q(v)$.
In addition, $|d^+_{G}(v)-d_{G}(v)/2|<k$ for each $v\in V(G)\setminus (S\cup S_0)$. 
According to the definition of $S$ and $S_0$, we must therefore have $d_G(v)/2 -s(v)/2-k < d^+_G(v) < d_G(v)/2 -s_0(v)/2+k$ for all $v\in V(G)$.
Hence the proof is completed.
}\end{proof}
\begin{cor}{\rm (Corollary 3.3 in \cite{ModuloBounded})}\label{Orientation:3k-3:cor:essentially:edge}
{Let $G$ be a loopless graph, let $k$ be an integer, $k\ge 3$, let $p:V(G)\rightarrow \mathbb{Z}_k$ be a mapping with 
$|E(G)|\stackrel{k}{\equiv}\sum_{v\in V(G)}p(v)$.
If $G$ is essentially $(3k-3)$-edge-connected and for each vertex $v$, $d_G(v)\ge 2k-1+[p(v)]_k$, then $G$ has a $p$-orientation such that for each vertex $v$,
$$\lfloor \frac{d_G(v)}{2}\rfloor - (k-1)\le\; d^+_G(v) \; \le \lceil \frac{d_G(v)}{2}\rceil+(k-1).
$$
Furthermore, for an arbitrary vertex $z$, $d^+_{G}(z)$
can be assigned to any plausible integer value in whose interval.
}\end{cor}
\begin{proof}
{It is enough to apply Corollary~\ref{cor:new-lower-bound} 
on the graph obtained from $G$ by adding a new artificial vertex $z_0$ with degree zero.
For the desired restriction on $d^+_{G}(z)$, we only need to set $\{s(z),s_0(z)\}=\{0,2k-1\}$ and $s(v)=s_0(v)=0$ for all vertices with $v$ with $v\neq z$.
}\end{proof}
\subsection{Compatible mappings}
\label{subsec:compatible}
In the following, we shall provide some sufficient conditions for a mapping to be compatible. Before doing so, let us make the following lemma which can conclude that every $(2bi(G)+1)$-edge-connected graph $G$ has a unique bipartition $X,Y$ of $V(G)$ satisfying $e_G(X)+e_G(Y)=bi(G)$. In particular, every connected bipartite graph has a unique bipartition.
\begin{lem}\label{lem:unique-bipartition}
{If $G$ is a $(2bi(G)+1+i)$-edge-connected graph and $i\ge 0$, then there is a unique bipartition $X,Y$ of $V(G)$ satisfying $e_G(X)+e_G(Y)\le bi(G)+i$.
}\end{lem}
\begin{proof}
{Let $X,Y$ be a bipartition of $V(G)$ satisfying $e_G(X)+e_G(Y)=bi(G)$.
Let $X',Y'$ be another bipartition of $V(G)$ with $\{X',Y'\}\neq \{X,Y\}$. Set $A=(X'\cap X)\cup (Y'\cap Y)$.
If $A$ is empty, then it is easy to check that $X'=Y$ and $Y'=X$. Likewise, if $V(G)\setminus A$ is empty, then we must have 
 $X'=X$ and $Y'=Y$. Thus $A$ is a nonempty proper subset of $V(G)$. 
Therefore, $e_G(X')+e_G(Y')\ge d_G(A)-(e_G(X)+e_G(Y))\ge (2bi(G)+1+i)-bi(G)>bi(G)+i$. Hence the proof is completed.
}\end{proof}
The following lemma introduces a lower bound on the bipartite index of $k$-tree-connected graphs.
\begin{lem}\label{lem:bi-lowebound}
{Let $G$ be a graph.
If $G[X,Y]$ is $k$-tree-connected and $e_G(X)+e_G(Y)\ge k$ for a bipartition $X, Y$ of $V(G)$, then $bi(G)\ge k$.
}\end{lem}
\begin{proof}
{Let $X',Y'$ be a partition of $V(G)$ with $e_G(X')+e_G(Y') = bi(G)$. 
Let $T_1,\ldots, T_k$ be $k$ edge-disjoint spanning trees of $G[X,Y]$ and let $e_1,\ldots, e_k$ be $k$ distinct edges of the graph $G[X]\cup G[Y]$. Since $T_i$ is a bipartite graph with the bipartition $(X,Y)$, the graph $T_i+e_i$ must contain an odd cycle $C_i$ which implies that $e_{C}(X')+e_{C}(Y')\ge 1$.
Therefore, $bi(G)=e_G(X')+e_G(Y') \ge \sum_{1\le i\le k}(e_{C_i}(X')+e_{C_i}(Y')) \ge k$.
}\end{proof}
The following theorem gives sufficient conditions for a mapping to be compatible with the main graph.
\begin{thm}\label{thm:compatible:sufficient}
{Let $G$ be a graph, let $k$ be a positive integer, and let $f:V(G)\rightarrow \mathbb{Z}_k$ be a mapping with $(k-1)\sum_{v\in V(G)}f(v)$ even.
Then $f$ is compatible with $G$ if one of the following conditions holds: 
\begin{enumerate}{
\item $k$ is even and $bi(G)\ge k/2-1$. 
\item $k$ is odd and $bi(G)\ge k-1$; see \cite{Thomassen-Wu-Zhang-2016}.

\item $G[X,Y]$ is $(2k-2)$-edge-connected and $e_G(X)+e_G(Y)\ge k-1$ for a bipartition $X, Y$ of $V(G)$.

\item $G$ is $(2k-3)$-edge-connected and $f$ is compatible with $G$ with respect to a bipartition $X,Y $ of $V(G)$ satisfying $e_G(X)+e_G(Y)\le k-2$.
}\end{enumerate}
}\end{thm}
\begin{proof}
{Let $X,Y$ be a bipartition of $V(G)$. 
First assume that $k$ is even and $e_G(X)+e_G(Y) \ge k/2-1$. 
By the assumption, $\sum_{v\in X}f(v)- \sum_{v\in Y}f(v)$ must be even.
Thus there are two integers $ x, y\in \{0,\ldots, k/2-1\}$ such that 
$\frac{1}{2}(\sum_{v\in X}f(v)-\sum_{v\in Y}f(v))\stackrel{k/2}{\equiv}x$ and $\frac{1}{2}(\sum_{v\in Y}f(v)-\sum_{v\in X}f(v))\stackrel{k/2}{\equiv}y$. Therefore, $x+y\stackrel{k/2}{\equiv}0$ which can conclude that $x \le e_G(X)$ or $y\le e_G(Y)$.
Now assume that $k$ is odd and $e_G(X)+e_G(Y) \ge k-1$. 
Let $ x, y\in \{0,\ldots, k-1\}$ be two integers such that $\sum_{v\in X}f(v)-\sum_{v\in Y}f(v)\stackrel{k}{\equiv}2x$ 
and $\sum_{v\in Y}f(v)-\sum_{v\in X}f(v)\stackrel{k}{\equiv}2y$.
Therefore, $x+y\stackrel{k}{\equiv}0$ which can conclude that $x \le e_G(X)$ or $y\le e_G(Y)$.
These imply that $f$ is compatible with $G$ with respect to $X,Y$. Hence the first two assertions hold.
To prove the third assertion, it is enough to apply Lemma~\ref{lem:bi-lowebound} together with the first two assertions.

Now, assume that $G$ is $(2k-3)$-edge-connected and $f$ is compatible with $G$ with respect to a bipartition $X,Y $ of $V(G)$ satisfying $ e_G(X)+e_G(Y)\le k-2=bi(G)+i$ and $i\ge 0$. Let $X',Y'$ be another bipartition of $V(G)$. Since $G$ is $(2bi(G)+1+i)$-edge-connected, by Lemma~\ref{lem:unique-bipartition}, we must have $e_G(X')+e_G(Y')\ge bi(G)+i+1\ge k-1$. Thus one can conclude that $f$ must be compatible with $G$ with respect to $X',Y'$ by repeating the proof of items (1) and (2).
This can complete the proof.
}\end{proof}
An immediate consequence of Theorem~\ref{thm:intro:Shirazi-Verstraete-2008} and the following corollary says that if $G$ is a graph satisfying $bi(G)\le k-2$, then the number of orientations with out-degrees $k-1$ 
 must be divisible by $k$, provided that $k$ is a prime number; for example, see~\cite[ Theorem 4]{Shirazi-Verstraete-2008}.
Recall that if a graph $G$ contains an $f$-factor modulo $k$, then $f$ must be compatible with $G$.
\begin{cor}
{Let $G$ be a graph, let $k$ be an integer with $k\ge 2$. Let $k_0=k$ when $k$ is odd, and let $k_0=k/2$ when $k$ is even.
Then $bi(G)\ge k_0-1$ if and only if every mapping $f:V(G)\rightarrow \mathbb{Z}_k$ with $(k-1)\sum_{v\in V(G)}f(v)$ even is compatible with $G$.
}\end{cor}
\begin{proof}
{Let $X, Y$ be a bipartition of $V(G)$ with $e_G(X)+e_G(Y)=bi(G)$. 
For a vertex $z\in X$, define $f(z)=2e_G(X)+2$ (mod $k$), and define $f(v)=0$ for all $v\in V(G)\setminus \{z\}$.
If $bi(G)<k_0-1$, then $f$ is not compatible with $G$.
Otherwise, there are two integers $x$ and $y$ with $0\le x\le e_G(X)$ and $0\le y\le e_G(Y)$ such that 
 $\sum_{v\in X} f(v)-2x\stackrel{k}{\equiv} \sum_{v\in Y}f(v)-2y$ which implies that $2(e_G(X)-x) +2y\stackrel{k}{\equiv}2(k-1)$. Therefore, $(e_G(X)-x) +y\stackrel{k_0}{\equiv}(k_0-1)$ and so $k_0 -1\le (e_G(X)-x) +y\le e_G(X)+e_G(Y)$, which is a contradiction.
Hence the proof can be completed using Theorem~\ref{thm:compatible:sufficient}.
}\end{proof}
\subsection{Bipartite index and tree-connectivity}
A well-known observation, attributed to Erd\H os (1965), says that every loopless graph with minimum degree at least $2m-1$ contains a bipartite factor with minimum degree at least $m$, see \cite[Theorem 2.4]{Bondy-Murty-2008}. This result is developed to an edge-connected version by Thomassen~(2008) as the following theorem.
\begin{thm}{\rm (\cite{Thomassen-2008-paths})}\label{thm:bipartite:factor:edge-version}
{Every $(2m-1)$-edge-connected graph has an $m$-edge-connected bipartite factor.
}\end{thm}
In the following, we shall provide a tree-connected version for it which will be used several times in this paper. This theorem is also generalized in \cite{B} for finding factors with bounded chromatic numbers.
\begin{thm}\label{thm:bipartite:factor}
{Every $2m$-tree-connected loopless graph $G$ has an $m$-tree-connected bipartite factor $H$ such that for every vertex set $A$,
$d_H(A)\ge \lceil d_G(A)/2 \rceil $.
}\end{thm}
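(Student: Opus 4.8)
The plan is to construct the bipartite factor $H$ greedily, choosing a bipartition $(A_0,B_0)$ of $V(G)$ that maximizes the number of crossing edges, and to show that the resulting $H=G[A_0,B_0]$ satisfies both requirements. The edge-connectivity bound $d_H(A)\ge \lceil d_G(A)/2\rceil$ is the easy half: for any vertex set $A$, each edge of $G$ leaving $A$ either crosses the cut $(A_0,B_0)$ (in which case it survives in $H$) or lies inside one side. If $A$ contains a vertex $v$ lying in, say, $A_0$ whose edges to $B_0$ are outnumbered by its edges to $A_0$, then moving $v$ to $B_0$ increases the number of crossing edges, contradicting maximality; so in fact $d_H(v)\ge d_G(v)/2$ for every vertex, and more generally a local-switching argument shows that for every vertex set $A$ at least half of $d_G(A)$ is retained. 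I would phrase this via the standard inequality $d_H(A) = d_G(A) - (\text{non-crossing edges leaving }A)$ together with the max-cut optimality condition applied to the appropriate subset of $A$.

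The harder half is the $m$-tree-connectedness of $H$. Here I would invoke the Nash-Williams--Tutte criterion: it suffices to show $e_H(P)\ge m(|P|-1)$ for every partition $P$ of $V(G)$. Since $G$ is $2m$-tree-connected, $e_G(P)\ge 2m(|P|-1)$ for every partition $P$. The issue is that the max-cut bipartition $H$ might, for a bad partition $P$, retain fewer than half of the edges joining the parts of $P$ — the per-vertex bound $d_H(v)\ge d_G(v)/2$ does not by itself give $e_H(P)\ge e_G(P)/2$. The key step will be to combine the two: apply the partition-connectivity hypothesis to a \emph{refinement} of $P$. Specifically, I would consider, for a putative counterexample partition $P=\{V_1,\dots,V_t\}$, the refinement obtained by splitting each $V_i$ according to the bipartition $(A_0,B_0)$, i.e. into $V_i\cap A_0$ and $V_i\cap B_0$; this has at most $2t$ parts, so $G$ retains $e_G(\text{refinement})\ge 2m(2t-1) \ge 2m\cdot(\text{something})$ crossing edges, and every such crossing edge that also crosses $(A_0,B_0)$ survives in $H$ and joins different parts of $P$ unless it lies within a single $V_i$. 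A counting argument balancing the edges within the $V_i$'s against the max-cut optimality (each $V_i\cap A_0$, $V_i\cap B_0$ is itself a candidate side-swap) should yield $e_H(P)\ge m(|P|-1)$.

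The main obstacle I anticipate is precisely making this refinement-counting argument tight enough to land at the factor of $m$ rather than something weaker like $m/2$ or $m - O(1)$. The subtlety is that edges internal to some $V_i$ but crossing $(A_0,B_0)$ \emph{do} survive in $H$ yet do \emph{not} contribute to $e_H(P)$, so naively one loses them; conversely edges internal to $(A_0,B_0)$ but crossing $P$ are lost from $H$ but must be controlled. I would handle this by writing $e_G(P) = e_H(P) + (\text{$P$-crossing edges inside a part of }(A_0,B_0))$ and $e_G(\text{refinement}) = e_H(P) + (\text{edges crossing }(A_0,B_0)\text{ but inside a part of }P)$, then using max-cut optimality of $(A_0,B_0)$ — applied to each set $V_i\cap A_0$ as a swap candidate — to bound the second error term by the first, and finally using $2m$-tree-connectedness on the refinement. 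With the bookkeeping arranged so the two error terms cancel, the bound $e_H(P)\ge \tfrac12 e_G(\text{refinement or }P) \ge m(|P|-1)$ should drop out. A cleaner alternative worth trying first: pick $H$ not as a global max-cut but as the bipartite factor produced by Corollary~\ref{cor:Eulerian:1/2}-style Eulerian splitting after adding a dummy vertex, which automatically halves every cut including every $e_G(P)$; I would pursue whichever of these two routes makes the tree-connectivity estimate cleanest.
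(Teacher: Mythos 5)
Your first half is exactly the paper's argument: take $H=G[A_0,B_0]$ for a bipartition maximizing the number of crossing edges, and observe that if $d_H(A)<d_G(A)/2$ for some vertex set $A$, then flipping the side of \emph{all} of $A$ (which preserves the crossing status of edges internal to $A$ and of edges disjoint from $A$, and toggles it on the cut $\partial_G(A)$) strictly increases the cut, a contradiction. So $d_H(A)\ge \lceil d_G(A)/2\rceil$ for every vertex set $A$, as required.

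The gap is in the second half, and it is a missed observation rather than a wrong idea: the set-version inequality you have just established already finishes the tree-connectivity step in one line, with no refinement of $P$ and no error-term cancellation. For any partition $P$ of $V(G)$, each edge of $H$ (resp.\ $G$) joining two different parts contributes to $d_H(A)$ (resp.\ $d_G(A)$) for exactly two parts $A\in P$, so
$e_H(P)=\tfrac12\sum_{A\in P}d_H(A)\ge \tfrac12\sum_{A\in P}\tfrac12 d_G(A)=\tfrac12 e_G(P)\ge m(|P|-1)$,
and Nash--Williams--Tutte gives $m$-tree-connectedness of $H$. This is precisely what the paper does. Your stated worry that ``the per-vertex bound does not give $e_H(P)\ge e_G(P)/2$'' is true of the per-vertex bound but irrelevant, since you proved the bound for arbitrary vertex sets, in particular for each part of $P$. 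The refinement-of-$P$ argument you sketch is therefore unnecessary, and as written it is not carried to completion (you yourself flag that the constants may not land at $m$). Your proposed ``cleaner alternative'' should also be discarded: an Eulerian-splitting factor in the spirit of Corollary~\ref{cor:Eulerian:1/2} halves all degrees but there is no reason for it to be bipartite, which is the whole point of the theorem.
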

\begin{proof}
{Let $H$ be a bipartite factor of $G$ with the maximum $|E(H)|$. 
We claim that $H$ is the desired factor.
Suppose, to the contrary, that $d_H(A)< d_G(A)/2 $ for a vertex set $A$.
Define $X_0=(X\setminus A)\cup (A\cap Y)$ and $Y_0=(Y\setminus A)\cup (A\cap X)$, where $(X,Y)$ is the bipartition of $H$. It is not difficult to see that the graph $G[X_0,Y_0]$ is a bipartite factor of $G$ with more edges than $H$ which is a contradiction.
Now, let $P$ be a partition of $V(G)$. Since $G$ is $2m$-tree-connected, we must have $e_G(P)\ge 2m(|P|-1)$, where
$e_G(P)$ denotes the number of edges of $G$ joining different parts of $G$.
Therefore, $e_H(P)=\sum_{A\in P}\frac{1}{2}d_H(A)\ge \sum_{A\in P}\frac{1}{4}d_G(A)=\frac{1}{2}e_G(P)\ge m(|P|-1).$
Hence by the well-known result of Nash-Williams~\cite{Nash-Williams-1961} and Tutte~\cite{Tutte-1961}, the graph $H$ must be $m$-tree-connected.
}\end{proof}
%
%
The following corollary gives a criterion for the existence of edge-disjoint odd cycles in $2k$-tree-connected graphs in terms of bipartite index. The edge-connected version of this corollary is mentioned in~\cite[Section 7]{Thomassen-2001-bi}.

\begin{cor}\label{cor:odd-cycles}
{Let $G$ be a $2k$-tree-connected graph.
Then $bi(G)\ge k$ if and only if $G$ contains $k$ edge-disjoint odd cycles.
}\end{cor}
\begin{proof}
{Let $X,Y$ be a partition of $V(G)$. If $C$ is an odd cycle, then obviously $e_{C}(X)+e_{C}(Y)\ge 1$.
This implies that $e_G(X)+e_G(Y)\ge k$ provided that $G$ contains $k$ edge-disjoint odd cycles.
Now, assume that $bi(G)\ge k$.
By Theorem~\ref{thm:bipartite:factor}, there exists a bipartition $X,Y$ of $V(G)$ such that 
$G[X,Y]$ is $k$-tree-connected. Since $e_G(X)+e_G(Y)\ge bi(G)\ge k$, by Lemma~\ref{lem:bi-lowebound}, the graph $G$ contains 
$k$ edge-disjoint odd cycles.
}\end{proof}
\begin{cor}
{Every $2k$-tree-connected graph $G$ satisfying $bi(G)\ge k$ contains a subgraph $H$ with maximum degree at most $2k$ satisfying $bi(H)\ge k$.
}\end{cor}
\begin{proof}
{By Corollary~\ref{cor:odd-cycles}, the graph $G$ contains $k$ edge-disjoint odd cycles which the union of them is the desired subgraph.
}\end{proof}
\begin{cor}\label{cor:decomposition:bipartite-index}
{Every $(2m+4)$-tree-connected graph $G$ can be decomposed into two factors $G_1$ and $G_2$ such that $G_1$ is Eulerian, $G_2[X,Y]$ is $m$-tree-connected for a bipartition $X,Y$ of $V(G)$, and
$$e_{G_2}(X)+e_{G_2}(Y) =\min\{k, bi(G)\},$$
where $k$ is an arbitrary nonnegative integer. 
}\end{cor}
\begin{proof}
{By Theorem~\ref{thm:bipartite:factor}, there exists a bipartition $X,Y$ of $V(G)$ such that 
$G[X,Y]$ is $(m+2)$-tree-connected. 
Decompose $G[X,Y]$ into two spanning trees $T_0$ and $T$ and 
an $m$-tree-connected factor $H$. Since $e_{G}(X)+e_{G}(Y) \ge bi(G)$, we can decompose $G[X] \cup G[Y]$ into two factors $M_0$ and $M$ such that $|E(M)|=\min\{k, bi(G)\}$. Let $F$ be a spanning forest of $T$ such that for each vertex $v$, $d_F(v)$ and $d_{T_0}(v)+d_{M_0}(v)$ have the same parity. It is enough to set $G_1=T_0\cup M_0\cup F$ and $G_2=G\setminus E(G_1)$ to complete the proof.
}\end{proof}
\begin{cor}
{Let $G$ be a graph with $bi(G)\ge k$.
If $G$ is $4k$-tree-connected, then it has $k$ edge-disjoint spanning Eulerian subgraphs with odd size.
}\end{cor}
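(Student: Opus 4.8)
The plan is to combine the decomposition given in Corollary~\ref{cor:decomposition:bipartite-index} (in the variant where the bipartite piece carries enough tree-connectivity) with the existence of odd cycles guaranteed by Corollary~\ref{cor:odd-cycles}. Since $G$ is $4k$-tree-connected and $bi(G)\ge k$, Theorem~\ref{thm:bipartite:factor} gives a bipartition $X,Y$ of $V(G)$ for which $G[X,Y]$ is $2k$-tree-connected; because $bi(G)\ge k$ we also have $e_G(X)+e_G(Y)\ge k$, so we may pick $k$ distinct edges $e_1,\dots,e_k$ lying inside $X$ or inside $Y$. Decompose $G[X,Y]$ into $2k$ edge-disjoint spanning trees $T_1,T'_1,\dots,T_k,T'_k$. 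As in the proof of Corollary~\ref{cor:odd-cycles}, each $T_i+e_i$ contains an odd cycle $C_i$, and these $C_1,\dots,C_k$ are edge-disjoint. This already produces $k$ edge-disjoint odd cycles; the remaining job is to enlarge each $C_i$ to a \emph{spanning Eulerian} subgraph of odd size while keeping them edge-disjoint.

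The natural way to do this is to use the spare spanning trees $T'_1,\dots,T'_k$, one per cycle: for each $i$, we want to add to $C_i$ a subgraph $R_i\subseteq T'_i$ so that $C_i\cup R_i$ becomes connected and spanning, all degrees become even, and the total number of edges stays odd. Connectivity and spanning-ness come for free once we throw in all of $T'_i$; the parity issue is handled by removing a suitable forest. Concretely, take the graph $C_i\cup T'_i$; at each vertex its degree has some parity, and since the number of odd-degree vertices is even, there is a forest $F_i\subseteq T'_i$ whose degree at each vertex matches that parity (this is the standard fact used in the proof of Corollary~\ref{cor:decomposition:bipartite-index}: in a tree one can select such a forest greedily from the leaves). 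Then $H_i := (C_i\cup T'_i)\setminus E(F_i)$ is spanning, connected (removing $F_i\subseteq T'_i$ from $C_i\cup T'_i$ cannot disconnect it, since $C_i$ already connects the two color classes and $T'_i\setminus E(F_i)$ still spans within them — this point needs a short verification), and has all degrees even, hence is Eulerian.

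It remains to arrange that $|E(H_i)|$ is odd. Note $|E(H_i)| = |E(C_i)| + |E(T'_i)| - |E(F_i)|$ and $|E(C_i)|$ is odd; so we need $|E(T'_i)|-|E(F_i)|$ even, i.e. $F_i$ should have the right parity of edges. The forest $F_i$ is not unique — at a vertex of odd discrepancy we must use an odd number of incident tree-edges, but we have freedom, and in particular we can always toggle whether a given leaf-edge is included by pushing the parity obligation one step up the tree. A cleaner route: first fix the parities, then if $|E(H_i)|$ turns out even, modify $H_i$ by removing one edge of $C_i$ and re-routing through two tree-paths in $T'_i$ (an ``ear'') that restores even degrees while changing the edge count by an odd amount. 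The main obstacle, and the place the argument must be written carefully, is exactly this simultaneous control of three conditions on each $H_i$ — spanning, Eulerian (even degrees), and odd size — using only the single reserved tree $T'_i$; everything else (edge-disjointness across different $i$, existence of the bipartition, existence of the odd cycles) is immediate from the already-established Theorem~\ref{thm:bipartite:factor} and Corollary~\ref{cor:odd-cycles}.
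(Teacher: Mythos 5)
There is a genuine gap, and it sits exactly where you flag uncertainty. Your plan is to form $C_i\cup T'_i$ and then \emph{delete} a parity-correcting forest $F_i\subseteq T'_i$. This can disconnect the graph: if $v$ is a leaf of $T'_i$ that does not lie on $C_i$, then $d_{C_i\cup T'_i}(v)=1$ is odd, so $F_i$ is forced to contain the unique $T'_i$-edge at $v$, and deleting it isolates $v$. So the "short verification" you defer would in fact fail, and the proposed repair (re-routing through "ears" in $T'_i$) is not worked out. The correct assembly, which is the paper's, inverts your construction: keep the spare tree intact by setting $H_i=T'_i+e_i$ (already spanning, connected, and containing an odd cycle), and then \emph{add} a spanning forest $F_i\subseteq T_i$ chosen so that $d_{F_i}(v)\equiv d_{H_i}(v)\pmod 2$ at every vertex (such a forest exists in any tree since the number of odd-parity prescriptions is even). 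Then $G_i=F_i\cup H_i$ is spanning, connected, and has all degrees even; edge-disjointness across $i$ is clear since the trees $T_1,\ldots,T_k,T'_1,\ldots,T'_k$ and the edges $e_1,\ldots,e_k$ are pairwise disjoint.

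The second point you struggle with, the odd size, is actually automatic and needs no parity juggling with $|E(F_i)|$: every edge of $G_i$ except $e_i$ joins $X$ to $Y$, so $|E(G_i)|=e_{G_i}(X,Y)+1$, and since all degrees of $G_i$ are even, $e_{G_i}(X,Y)\equiv\sum_{v\in X}d_{G_i}(v)\equiv 0\pmod 2$; hence $|E(G_i)|$ is odd. (The same observation would rescue the size count in any variant of the construction that uses exactly one intra-class edge per subgraph.) Your first half --- invoking Theorem~\ref{thm:bipartite:factor} to get a $2k$-tree-connected bipartite factor and picking $k$ intra-class edges as in Corollary~\ref{cor:odd-cycles} --- matches the paper and is fine; the assembly of the Eulerian subgraphs is what needs to be redone as above.
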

\begin{proof}
{By Theorem~\ref{thm:bipartite:factor}, $G[X,Y]$ is $2k$-tree-connected for a partition $X,Y$ of $V(G)$.
Thus $G[X,Y]$ contains $2k$ edge-disjoint spanning trees $T_1,\ldots, T_k$ and $T'_1,\ldots, T'_k$.
By the assumption, $e_G(X)+e_G(Y)\ge k$. 
Thus we can take $e_1,\ldots, e_k$ to be $k$ edges of $G[X]\cup G[Y]$.
Define $H_i=T'_i+e_i$.
Let $F_i$ be a spanning forest of $T_i$ such that for each vertex $v$, $d_{F_i}(v)$ and $d_{H_i}(v)$ have the same parity.
It is enough to set $G_i=F_i\cup H_i$ to construct the desired Eulerian factors.
}\end{proof}
%
%
%
%
%
%
%
%
%
%
%
%
%
%
%
%
\section{Factors modulo $2$}
In this section, we consider the existence of parity factors. 
Our results are based on the following lemma which is a special case of a result due 
to Lov{\'a}sz (1970) who gave a criterion for the existence of parity $(g,f)$-factors. Here, we denote by $\omega (G)$ the number of components of a graph $G$, and a {\it parity $(g,f)$-factor} refers to a spanning subgraph $H$ such that for each vertex $v$, $g(v)\le d_H(v)\le f(v)$ and $d_H(v)\stackrel{2}{\equiv}g(v) \stackrel{2}{\equiv} f(v)$.
where $g$ and $f$ are two integer-valued functions on $V(G)$.
\begin{lem}{\rm(\cite{Lovasz-1972}; see also \cite[ Lemma 6.1]{Equitable-Factorization})}\label{lem:Lovasz1972}
{Let $G$ be a connected graph and let $g$ and $f$ be two integer-valued functions on $V(G)$ with $g\le f$ satisfying
 $\sum_{v\in V(G)}f(v)\stackrel{2}{\equiv}0$, and $f(v)\stackrel{2}{\equiv}g(v)$ for each vertex $v$.
Then $G$ has a parity $(g,f)$-factor,
if for any two disjoint subsets $A$ and $B$ of $V(G)$ with $A\cup B\neq \emptyset$,
$$\omega(G \setminus (A\cup B))\le 1+ \sum_{v\in A}f(v)+ \sum_{v\in B}(d_{G}(v)-g(v))-d_G(A,B).$$
}\end{lem}
It is known that edge-connectivity $1$ is sufficient for a graph to have an $f$-factor modulo~$2$, see~\cite{Edmonds-Johnson-1973, Thomassen-Wu-Zhang-2016}. The following theorem shows that edge-connectivity $2$ is sufficient for a graph to have an $f$-factor modulo~$2$ whose degrees fall in predetermined short intervals.
\begin{thm}\label{Factor:modulo2:thm:2:edge}
{Let $G$ be a graph and let $f:V(G)\rightarrow \mathbb{Z}_2$ be a mapping with
$\sum_{v\in V(G)}f(v) \stackrel{2}{\equiv} 0$. 
If $G$ is $2$-edge-connected, then it has an $f$-factor $H$ such that for each vertex~$v$,
$$\lfloor \frac{d_{G}(v)}{2}\rfloor-1\le d_H(v)\le \lceil \frac{d_{G}(v)}{2}\rceil+1.$$
Furthermore, for an arbitrary vertex $z$, $d_{H}(z)$ 
can be assigned to any plausible integer value in whose interval.
}\end{thm}
\begin{proof}
{For each vertex $v$, define $g'(v)\in \{\lfloor d_{G}(v)/2\rfloor-1, \lfloor d_{G}(v)/2\rfloor\}$ and
 $f'(v)\in \{ \lceil d_{G}(v)/2\rceil, \lceil d_{G}(v)/2\rceil+1\}$ such that 
$g'(v)\stackrel{2}{\equiv}f(v)\stackrel{2}{\equiv}f'(v)$. 
Obviously, $g'(v)\le f'(v)$. 
Note that if $g'(z)<f'(z)$, we allow to replace $g'(z)$ by $g'(z)+2$ or 
 replace $f'(z)$ by $f'(z)-2$ with respect to our purpose related to $z$.
For the first choice, we have $|d_G(z)/2-g'(z)|\le 3/2$ 
and for the second choice, we have $|d_G(z)/2-f'(z)|\le 3/2$.
Let $A$ and $B$ be two disjoint subsets of $V(G)$ with $A\cup B\neq \emptyset$.
 Since $G$ is $2$-edge-connected, it is not hard to check that
$$\omega(G\setminus (A\cup B)) \le \sum_{A\cup B}\frac{1}{2}d_G(v)-e_G(A\cup B) \le \sum_{A\cup B}\frac{1}{2}d_G(v)-d_G(A,B).$$
Therefore,
$$\omega(G\setminus (A\cup B))< 2+\sum_{v\in A}f'(v)+\sum_{v\in B}(d_G(v)-g'(v)) -d_{G}(A,B),$$
whether $z\in A\cup B$ or not.
Thus by Lemma~\ref{lem:Lovasz1972}, the graph $G$ has a parity $(g',f')$-factor and the proof is completed.
}\end{proof}
The following well-known result on Eulerian graphs is a special case of Theorem~\ref{Factor:modulo2:thm:2:edge}.
\begin{cor}\label{cor:Eulerian:1/2}
{Every Eulerian graph $G$ with $z\in V(G)$ has a factor $H$ 
such that for each~$v\in V(G)\setminus \{z\}$, $d_H(v)=d_G(v)/2$, and $d_H(z)\in \{d_G(z)/2, d_G(z)/2+1\}$.
}\end{cor}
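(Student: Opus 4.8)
The plan is to read this off from Theorem~\ref{Factor:modulo2:thm:2:edge}. Recall that an Eulerian graph $G$ is connected with all degrees even; in particular each $d_G(v)/2$ is an integer and, for every vertex $v$, the interval $[\lfloor d_G(v)/2\rfloor-1,\lceil d_G(v)/2\rceil+1]$ from that theorem is just $\{\,d_G(v)/2-1,\ d_G(v)/2,\ d_G(v)/2+1\,\}$. To invoke Theorem~\ref{Factor:modulo2:thm:2:edge} I first need $G$ to be $2$-edge-connected: if $G$ had a bridge $e$, then one of the two components of $G\setminus e$ would have exactly one vertex of odd degree, which is impossible, so $G$ is bridgeless and hence, having at least two vertices, $2$-edge-connected. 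The degenerate case of a single vertex $z$ carrying $m$ loops is disposed of directly by letting $H$ keep $\lceil m/2\rceil$ of the loops, so that $d_H(z)=2\lceil m/2\rceil\in\{m,m+1\}=\{d_G(z)/2,d_G(z)/2+1\}$.

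Next I would choose the parity function. Let $f:V(G)\to Z_2$ satisfy $f(v)\stackrel{2}{\equiv}d_G(v)/2$ for every $v\neq z$, and pick the unique $f(z)\in Z_2$ making $\sum_{v\in V(G)}f(v)\stackrel{2}{\equiv}0$. Applying Theorem~\ref{Factor:modulo2:thm:2:edge} to $G$, $f$, and the distinguished vertex $z$ produces an $f$-factor $H$ with $\lfloor d_G(v)/2\rfloor-1\le d_H(v)\le\lceil d_G(v)/2\rceil+1$ for all $v$, in which $d_H(z)$ may moreover be prescribed to any parity-admissible value of its interval. For $v\neq z$ the three integers in the interval are $d_G(v)/2-1,\ d_G(v)/2,\ d_G(v)/2+1$, and the two outer ones have the parity opposite to that of $d_G(v)/2\stackrel{2}{\equiv}f(v)$; hence $d_H(v)$ is forced to equal $d_G(v)/2$, exactly as wanted.

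It remains to pin down $d_H(z)$. Since $\sum_{v}d_H(v)$ is even and $\sum_{v\neq z}d_H(v)=\sum_{v\neq z}d_G(v)/2$, the value $d_H(z)$ is congruent mod $2$ to $f(z)$ (which is just how $f(z)$ was chosen). If $f(z)\stackrel{2}{\equiv}d_G(z)/2$, the only parity-admissible integer in $z$'s interval is $d_G(z)/2$, so $d_H(z)=d_G(z)/2$. If instead $f(z)\stackrel{2}{\equiv}d_G(z)/2+1$, the parity-admissible integers are $d_G(z)/2-1$ and $d_G(z)/2+1$, and I would use the ``furthermore'' clause of Theorem~\ref{Factor:modulo2:thm:2:edge} to select $d_H(z)=d_G(z)/2+1$. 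Either way $d_H(z)\in\{d_G(z)/2,d_G(z)/2+1\}$, which finishes the argument.

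I do not expect a genuine obstacle here: all the substance sits in Theorem~\ref{Factor:modulo2:thm:2:edge} (and, behind it, Lov\'asz's parity $(g,f)$-factor criterion, Lemma~\ref{lem:Lovasz1972}). The only points needing a moment's care are the observation that an Eulerian graph is $2$-edge-connected and the remark that the width-$2$ interval together with the parity constraint leaves exactly one feasible degree at each vertex other than $z$; the ``furthermore'' clause then accounts for precisely the single unit of slack allowed at $z$ in the statement. A self-contained alternative would be to orient $G$ along an Euler circuit, giving $d^+_G(v)=d^-_G(v)=d_G(v)/2$ for all $v$, and take $H$ to be the set of out-edges; but phrasing it as a corollary of Theorem~\ref{Factor:modulo2:thm:2:edge} fits the present section better.
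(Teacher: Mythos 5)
Your proof is correct and takes essentially the same approach as the paper: it invokes Theorem~\ref{Factor:modulo2:thm:2:edge} with the parity function $f(v)\stackrel{2}{\equiv}d_G(v)/2$ and uses the ``furthermore'' clause to place $d_H(z)$ in $\{d_G(z)/2,\,d_G(z)/2+1\}$. The paper's own proof is merely a terser version of yours, omitting the checks you supply explicitly (that an Eulerian graph is $2$-edge-connected, the one-vertex degenerate case, and the parity bookkeeping that forces $d_H(v)=d_G(v)/2$ for $v\neq z$).
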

\begin{proof}
{For each $v\in V(G)\setminus \{z\}$, define $f(v)=d_G(v)/2$ (mod $2$), and also
 define $f(z)=\sum_{v\in V(G)\setminus \{z\}}f(v)$ (mod $2$). 
Since $G$ is $2$-edge-connected, by Theorem~\ref{Factor:modulo2:thm:2:edge}, the graph $G$ has an $f$-factor $H$ such that for each~$v\in V(G)\setminus \{z\}$, 
$d_{G}(v)/2-1\le d_H(v)\le d_{G}(v)/2+1$, and 
$d_{G}(z)/2\le d_H(z)\le d_{G}(z)/2+1$.
Hence $H$ is the desired factor.
}\end{proof}
An interesting application of Theorem~\ref{Factor:modulo2:thm:2:edge} is given in the following corollary.
\begin{cor}
{Every connected $2r$-regular graph $G$ with $(r+1)|V(G)|$ even can be decomposed into two factors whose degrees lie in the set $\{r-1,r+1\}$.
}\end{cor}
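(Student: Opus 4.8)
The plan is to reduce the required decomposition to the construction of a single factor. Since $G$ is $2r$-regular, any factor $H$ of $G$ determines a complementary factor $H'$ with $E(H')=E(G)\setminus E(H)$ satisfying $d_{H'}(v)=2r-d_H(v)$ for every vertex $v$; hence $d_H(v)\in\{r-1,r+1\}$ for all $v$ if and only if $d_{H'}(v)\in\{r-1,r+1\}$ for all $v$. So it suffices to exhibit one factor $H$ of $G$ with $d_H(v)\in\{r-1,r+1\}$ for every $v$, and then take $\{H,H'\}$ as the desired decomposition.

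To produce such an $H$, I would invoke Theorem~\ref{Factor:modulo2:thm:2:edge} applied to the constant mapping $f:V(G)\to Z_2$ with $f(v)\stackrel{2}{\equiv}r+1$ for all $v$ (recall that $r-1\stackrel{2}{\equiv}r+1$). Two hypotheses must be checked. First, $G$ is $2$-edge-connected: a connected graph all of whose vertices have even degree is bridgeless, since deleting a bridge would leave a component containing exactly one vertex of odd degree, contradicting the handshake lemma, and $G$ is $2r$-regular and connected. Second, the parity hypothesis $\sum_{v\in V(G)}f(v)\stackrel{2}{\equiv}0$ holds, because $\sum_{v\in V(G)}f(v)\stackrel{2}{\equiv}(r+1)|V(G)|$, which is even by assumption. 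Theorem~\ref{Factor:modulo2:thm:2:edge} then yields an $f$-factor $H$ with
$$\lfloor\frac{d_G(v)}{2}\rfloor-1\le d_H(v)\le\lceil\frac{d_G(v)}{2}\rceil+1$$
for each $v$. Since $d_G(v)=2r$, this becomes $r-1\le d_H(v)\le r+1$, and the congruence $d_H(v)\stackrel{2}{\equiv}r+1$ excludes the value $d_H(v)=r$; hence $d_H(v)\in\{r-1,r+1\}$, as needed, and the first paragraph finishes the argument.

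I do not anticipate a genuine obstacle here: the statement is essentially a corollary of Theorem~\ref{Factor:modulo2:thm:2:edge}. The only point that takes a moment of reflection is recognizing that the hypothesis ``$(r+1)|V(G)|$ even'' is exactly the divisibility condition $\sum_{v\in V(G)}f(v)\stackrel{2}{\equiv}0$ required to apply that theorem, and that $2r$-regularity together with connectedness automatically supplies the needed $2$-edge-connectivity, so no extra assumption is necessary. It is worth remarking that this example illustrates how the ``$\pm 1$ around $d_G(v)/2$'' slack in Theorem~\ref{Factor:modulo2:thm:2:edge} is exactly what is needed precisely when $d_G(v)/2=r$ has the opposite parity to the value one wishes to prescribe.
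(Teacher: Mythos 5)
Your proof is correct and follows essentially the same route as the paper: apply Theorem~\ref{Factor:modulo2:thm:2:edge} with the constant mapping $f\stackrel{2}{\equiv}r+1$ to get a factor $H$ with $r-1\le d_H(v)\le r+1$ and $d_H(v)\stackrel{2}{\equiv}r+1$, then pair $H$ with its complement. Your write-up is merely more explicit than the paper's in verifying the two hypotheses (that connectedness plus even regularity gives $2$-edge-connectivity, and that $(r+1)|V(G)|$ even is exactly the required parity condition), which the paper leaves implicit.
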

\begin{proof}
{For each vertex $v$, define $f(v)=r+1$ (mod $2$). By Theorem~\ref{Factor:modulo2:thm:2:edge}, the graph $G$ has an $f$-factor such that for each vertex $v$, $r-1\le d_{G}(v)/2-1\le d_H(v)\le d_{G}(v)/2+1 \le r+1$.
Hence $H$ and its complement are the desired factors whose degrees lie in the set $\{r-1,r+1\}$.
}\end{proof}
\begin{remark}
{Note that Theorem~\ref{Factor:modulo2:thm:2:edge} can conclude Theorem 11 in~\cite{Bujtas-Jendrol-Tuza}. 
A generalization of it is formulated in \cite[Theorem 6.2]{Equitable-Factorization}.
}\end{remark}
%
%
%
%
%
%
%
%
%
In the following theorem, 
we develop Theorem~\ref{Factor:modulo2:thm:2:edge} to a partition-connected version.
A graph $G$ is said to be 
{\it $(m,l_0)$-partition-connected},
 if it can be decomposed into an $m$-tree-connected factor and a factor $F$ which admits an orientation such that for each vertex $v$, $d^+_F(v)\ge l_0(v)$,
where $l_0$ is a nonnegative integer-valued function on $V(G)$. 
\begin{thm}\label{Factor:modulo2:thm:2:partition}
{Let $G$ be a graph and let $f:V(G)\rightarrow \mathbb{Z}_2$ be a mapping with 
$ \sum_{v\in V(G)}f(v) \stackrel{2}{\equiv} 0$. 
Let $s$, $s_0$, and $l_0$ be three integer-valued functions on $V(G)$ satisfying $s(v)+s_0(v)< d_G(v)$ 
and $\max\{s(v),s_0(v)\}\le l_0(v)$ for each vertex $v$. 
If $G$ is $(1, l_0)$-partition-connected, then it has an $f$-factor $H$ such that for each vertex $v$, 
$$s(v)\le d_{H}(v)\le d_G(v)-s_0(v).$$
}\end{thm}
\begin{proof}
{For each vertex $v$, define $g'(v)\in \{s(v), s(v)+1\}$ and $f'(v)\in \{d_G(v)-s_0(v)-1, d_G(v)-s_0(v)\}$ such that 
$g'(v)\stackrel{2}{\equiv}f(v)\stackrel{2}{\equiv}f'(v)$. 
Note that the condition $s(v)+s_0(v)< d_G(v)$ implies that $g'(v)\le f'(v)+1$ and hence $g'(v)\le f'(v)$, because those have the same parity. 
By the assumption, the graph $G$ can be decomposed into two factors $T$ and $F$ such that $T$ is a spanning tree and $F$ admits an orientation such that for each vertex $v$, $d^+_F(v)\ge l_0(v)$.
Let $A$ and $B$ be two disjoint subsets of $V(G)$. It is not hard to check that
$$\omega(G\setminus (A\cup B)) \le \omega(T\setminus (A\cup B))= \sum_{A\cup B}(d_T(v)-1)+1-e_T(A\cup B)\le \sum_{A\cup B}(d_T(v)-1)+1-d_T(A,B).$$
Since $d^+_F(v)\ge l_0(v)\ge \max \{s(v),s_0(v)\}$ for each vertex $v$, we must have 
$$0\le \sum_{v\in A\cup B}d^-_F(v)-d_F(A,B)\le 
\sum_{v\in A}(d_F(v)-s_0(v))+\sum_{v\in B}(d_F(v)-s(v))-d_F(A,B).$$
 Therefore,
$$\omega(G\setminus (A\cup B))\le \sum_{v\in A}(d_G(v)-s_0(v)-1)+
\sum_{v\in B}(d_G(v)-s(v)-1) -d_{G}(A,B)+1,$$
which implies that 
$$\omega(G\setminus (A\cup B))\le \sum_{v\in A}f'(v)+
\sum_{v\in B}(d_G(v)-g'(v)) -d_{G}(A,B)+1.$$
Thus by Lemma~\ref{lem:Lovasz1972}, the graph $G$ has a parity $(g',f')$-factor and the proof is completed.
}\end{proof}
%
%
%
%
%
%
%
%
%
%
%
%
%
\section{Factors modulo $k$: Almost bipartite graphs}
\label{sec:Factors:edge-connected graphs}
\subsection{Bipartite graphs}

There is a special one-to-one mapping between orientations
and factors of any bipartite graph, which was
utilized
by Thomassen in~\cite{Thomassen-2014} in order to establish Theorem~\ref{Inro:factor:thm:Thomassen}.
Using the same arguments, we derive the following strengthened version. 

\begin{thm}\label{thm:essentially:factor:k}
{Let $G$ be a bipartite graph with bipartition $(X,Y)$, let $k$ be a positive integer,
and let $f:V(G)\rightarrow \mathbb{Z}_k$ be a mapping with $\sum_{v\in X}f(v) \stackrel{k}{\equiv}\sum_{v\in Y}f(v)$.
If $G$ is essentially $(3k-3)$-edge-connected and 
$d_G(v)\ge 2k-1+[f(v)]_k$ for each vertex $v$,
then $G$ has an $f$-factor $H$ such that for each vertex~$v$,
$$ \lfloor \frac{d_G(v)}{2}\rfloor-(k-1)
\le d_H(v) \le
 \lceil \frac{d_G(v)}{2}\rceil +(k-1).$$
Furthermore, for an arbitrary vertex $z$, $d_{H}(z)$ 
can be assigned to any plausible integer value in whose interval.
}\end{thm}
\begin{proof}
{The special case $k=2$ follows from Theorem~\ref{Factor:modulo2:thm:2:edge}. Assume $k\ge 3$.
For each $v\in X$, define $ p(v) = f(v)$, and for each $v\in Y$, define $ p(v) = d_G(v)-f(v)$.
Since $f$ is compatible with $G$, $\sum_{v\in X}f(v)\stackrel{k}{\equiv}\sum_{v\in Y}f(v)$ which can conclude that $|E(G)|\stackrel{k}{\equiv}\sum_{v\in V(G)}p(v)$. 
By the assumption for each $v\in X$, $d_G(v)\ge 2k-1+[f(v)]_k =2k-1+[p(v)]_k$.
Also, for each $v\in Y$, $d_G(v)\ge 2k-1+[f(v)]_k =2k-1+[d_G(v)-p(v)]_k$ which can imply that $d_G(v) \ge 2k-1+[p(v)]_k$.
More precisely, if we let $i = d_G(v)- 2k$, then since $i-[f(v)]_k \ge -1$, we must have $d_G(v) \ge 2k+i-[f(v)]_k-1 \ge 2k-1+[i-f(v)]_k=2k-1+[p(v)]_k$.
Thus by Corollary~\ref{Orientation:3k-3:cor:essentially:edge},
the graph $G$ has a $p$-orientation modulo $k$ 
such that for each vertex $v$, $ \lfloor d_G(v)/2\rfloor-(k-1)
\le d^+_G(v) \le
 \lceil d_G(v)/2\rceil +(k-1)$.
Take $H$ to be the factor of $G$ consisting of all edges directed from $X$ to $Y$.
Since for all vertices $v\in X$, $d_H(v)=d_G^+(v)$, 
and for all vertices $v\in Y$, $d_H(v)=d_G(v)-d^+_G(v)$, the graph $H$ is the desired $f$-factor.
The remaining case $k= 1$ can be proved similarly, because it is known that every graph $G$ has an orientation such that for each vertex $v$, $ \lfloor d_G(v)/2\rfloor \le d^+_G(v) \le \lceil d_G(v)/2\rceil$; in particular, we can arbitrarily have $d^+_G(z)=\lfloor d_G(z)/2\rfloor$ or $d^+_G(z)=\lceil d_G(z)/2\rceil$ by reversing the orientation (if necessary).
}\end{proof}
Bensmail, Merker, and Thomassen (2017)~\cite{Bensmail-Merker-Thomassen-2017}
applied a weaker version of the following corollary to deduce that every $16$-edge-connected bipartite graph
admits a decomposition into at most two locally irregular subgraphs. Their proof is based on Theorem 5.2 in~\cite{Bensmail-Merker-Thomassen-2017} for the special case $k=6$. By replacing the following result in their proof, this number can be pushed down to $15$.
\begin{cor}\label{cor:bipartite:factor}
{Let $G$ be a bipartite graph with bipartition $(X,Y)$, let $k$ be a positive integer, and let $f:V(G)\rightarrow \mathbb{Z}_k$ be a mapping with $\sum_{v\in X}f(v) \stackrel{k}{\equiv}\sum_{v\in Y}f(v)$. 
If $G$ is $(3k-3)$-edge-connected, then it has an $f$-factor $H$ such that for each vertex $v$,
$$ \lfloor \frac{d_G(v)}{2}\rfloor-(k-1)
\le d_H(v) \le
 \lceil \frac{d_G(v)}{2}\rceil +(k-1).$$
Furthermore, for an arbitrary vertex $z$, $d_{H}(z)$ 
can be assigned to any plausible integer value in whose interval.
}\end{cor}
\begin{proof}
{Apply Theorem~\ref{thm:essentially:factor:k} with the fact that for each vertex $v$, $d_G(v)\ge 3k-3 \ge 2k-1+[f(v)]_k$.
}\end{proof}
We have the following immediate conclusions similar to Corollaries~3.8 and~3.9 in \cite{ModuloBounded}.
Note the required edge-connectivity $3k-3$ of the first one can be replaced by odd edge-connectivity $3k-2$
and the second one can be replaced by the condition $d_G(A)\ge 6k-2$, where $A$ is a subset of $V(G)$ with odd size.
\begin{cor}
{Let $G$ be a bipartite graph and let $k$ be an odd positive integer.
If $G$ is $(3k-3)$-edge-connected, then it has a factor $H$ such that for each vertex $v$,
$$d_H(v)\in \{\frac{d_G(v)}{2}-\frac{k}{2},\; \frac{d_G(v)}{2},\; \frac{d_G(v)}{2}+\frac{k}{2}\}.$$
}\end{cor}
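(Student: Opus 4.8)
The plan is to obtain this as an immediate corollary of Corollary~\ref{cor:bipartite:factor}, by choosing the target residue function~$f$ cleverly and then reading off which integers inside the guaranteed degree window are actually admissible.

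First I would use that $k$ is odd: the integer $h=(k+1)/2$ then satisfies $2h\stackrel{k}{\equiv}1$, so $h$ represents the inverse of~$2$ in~$Z_k$. Define $f\colon V(G)\to Z_k$ by $f(v)\stackrel{k}{\equiv}h\,d_G(v)$. The first step is to check that $f$ is compatible with~$G$. Let $(X,Y)$ be the bipartition of~$G$. Since each edge has exactly one end in~$X$, $\sum_{v\in X}d_G(v)=|E(G)|=\sum_{v\in Y}d_G(v)$, hence $\sum_{v\in X}f(v)\stackrel{k}{\equiv}h\,|E(G)|\stackrel{k}{\equiv}\sum_{v\in Y}f(v)$; and since $e_G(X)=e_G(Y)=0$, this says precisely that $f$ is compatible with~$G$ with respect to the natural bipartition. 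As $3k-3\ge 2k-1$ for $k\ge 2$ (the case $k=1$ being vacuous, since $Z_1$ is trivial), $G$ is $(2k-1)$-edge-connected, so by the observation in the introduction that, for a $(2k-1)$-edge-connected bipartite graph, compatibility with respect to the natural bipartition already implies full compatibility, $f$ is compatible with~$G$.

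Next I would apply Corollary~\ref{cor:bipartite:factor} to get an $f$-factor~$H$ with $\lfloor d_G(v)/2\rfloor-(k-1)\le d_H(v)\le\lceil d_G(v)/2\rceil+(k-1)$ for all~$v$, and then pin down each~$d_H(v)$. Fix~$v$, write $d=d_G(v)$, and note that, $H$ being an $f$-factor, $d_H(v)\stackrel{k}{\equiv}f(v)\stackrel{k}{\equiv}h\,d$; a one-line computation using $2h\stackrel{k}{\equiv}1$ shows $h\,d\stackrel{k}{\equiv}d/2$ when $d$ is even and $h\,d\stackrel{k}{\equiv}(d+k)/2\stackrel{k}{\equiv}(d-k)/2$ when $d$ is odd. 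If $d$ is even, then $d_H(v)-d/2$ is a multiple of~$k$ of absolute value at most $k-1$, hence $d_H(v)=d/2$. If $d$ is odd, the window equals $[\,(d-2k+1)/2,\,(d+2k-1)/2\,]$, whose endpoints are integers (as $d$ is odd and $2k$ is even) and whose length is $2k-1$; it is therefore a block of exactly $2k$ consecutive integers, so it contains exactly two integers in each residue class modulo~$k$, and since $(d-k)/2$ and $(d+k)/2$ are distinct integers lying in this window that are congruent to~$f(v)$, they must be those two, whence $d_H(v)\in\{(d-k)/2,(d+k)/2\}$. In all cases $d_H(v)\in\{\,d/2-k/2,\ d/2,\ d/2+k/2\,\}$, as required.

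I do not foresee a genuine obstacle: the one idea worth isolating is the use of $h=(k+1)/2$ as $2^{-1}$ in~$Z_k$, which is precisely what makes the residue class of~$f(v)$ sit symmetrically around~$d_G(v)/2$; the remainder is the compatibility check (a single line, given that both parts of the bipartition have total degree~$|E(G)|$, together with the introduction's reduction of compatibility to the natural bipartition under $(2k-1)$-edge-connectivity) and the parity bookkeeping in the final interval count, where the only care needed is to keep track of whether $d_G(v)$ is even or odd, since that decides which of the three listed quantities are integers at all.
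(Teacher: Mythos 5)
Your proposal is correct and follows essentially the same route as the paper: your $f(v)=h\,d_G(v)$ with $h=(k+1)/2$ is exactly the paper's choice $f(v)\equiv d_G(v)/2$ for even degree and $(d_G(v)+k)/2$ for odd degree, and both proofs then invoke Corollary~\ref{cor:bipartite:factor} and pin down $d_H(v)$ by counting the residues modulo $k$ in the guaranteed degree window. The only (harmless) differences are cosmetic: you verify compatibility via $\sum_{v\in X}d_G(v)=|E(G)|=\sum_{v\in Y}d_G(v)$ instead of the paper's parity count of odd-degree vertices, and you are slightly more explicit about reducing full compatibility to the natural bipartition.
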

\begin{proof}
{For each vertex $v$, define $f(v)=d_G(v)/2$ (mod $k$) when $d_G(v)$ is even, 
and define $f(v)=(d_G(v)+k)/2$ (mod $k$) when $d_G(v)$ is odd.
Let $(X,Y)$ be the bipartition of $G$.
Obviously, $\sum_{v\in X}f(v)\stackrel{k}{\equiv}|E(G)|/2+n_xk/2$ and 
 $\sum_{v\in Y}f(v)\stackrel{k}{\equiv}|E(G)|/2+n_yk/2$,
where $n_x$ and $n_y$ are the number of vertices in $X$ and $Y$ with odd degrees, respectively.
Since $n_x$ and $n_y$ have the same parity, $\sum_{v\in X}f(v)\stackrel{k}{\equiv}\sum_{v\in Y}f(v)$.
Thus by Corollary~\ref{cor:bipartite:factor}, the graph $G$ has an $f$-factor such that for each vertex $v$,
$\lfloor d_G(v)/2\rfloor-(k-1)\le d_H(v) \le \lceil d_G(v)/2\rceil+(k-1)$.
If $d_G(v)$ is even, then $d_H(v)=d_G(v)/2$.
Otherwise, since $d_G(v)/2-3k/2 < d_H(v) < d_G(v)/2+3k/2$, we must have $d_H(v)\in \{d_G(v)/2-k/2,d_G(v)/2+k/2\}$.
Hence $H$ is the desired factor.
}\end{proof}
\begin{cor}\label{cor:Eulerian:even-k}
{Let $G$ be a bipartite Eulerian graph of even order and let $k$ be a positive integer.
If $G$ is $(6k-2)$-edge-connected, then it has a factor $H$ such that for each vertex $v$,
$$d_H(v)\in \{\frac{d_G(v)}{2}-k,\; \frac{d_G(v)}{2}+k\}.$$
}\end{cor}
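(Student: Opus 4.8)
The plan is to reduce the statement to Corollary~\ref{cor:bipartite:factor} exactly as in the proof of the preceding corollary, but now exploiting that $G$ is Eulerian (so every degree is even) and of even order. First I would set $f(v) \equiv d_G(v)/2 \pmod{k}$ for every vertex $v$; since $G$ is Eulerian this is well-defined, and no parity correction term is needed. Let $(X,Y)$ be the bipartition of $G$. Counting edges twice, $2\sum_{v\in X} d_G(v)/2 = \sum_{v\in X} d_G(v) = |E(G)| = \sum_{v\in Y} d_G(v)$, so $\sum_{v\in X} f(v) \equiv \sum_{v\in Y} f(v) \pmod k$ whenever, say, $k$ is odd; for even $k$ one instead notes that $|E(G)| = 2e$ is even (it equals $d_G(X,Y)$, and each vertex contributes an even amount), whence both sums are congruent to $|E(G)|/2 \bmod k$. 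So $f$ is compatible with the unique bipartition of $G$, and since $G$ is $(6k-2)$-edge-connected — in particular $(2k-1)$-edge-connected — the remark in the introduction (that compatibility with the unique bipartition suffices for highly edge-connected bipartite graphs) shows $f$ is compatible with $G$.

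Next I would check the edge-connectivity hypothesis of Corollary~\ref{cor:bipartite:factor}: since $6k-2 \ge 3k-3$ for $k \ge 1$, the graph $G$ has an $f$-factor $H$ with $\lfloor d_G(v)/2\rfloor - (k-1) \le d_H(v) \le \lceil d_G(v)/2\rceil + (k-1)$ for every $v$. Because $d_G(v)$ is even, this reads $d_G(v)/2 - (k-1) \le d_H(v) \le d_G(v)/2 + (k-1)$, i.e. $|d_H(v) - d_G(v)/2| \le k-1 < k$. Combined with $d_H(v) \equiv d_G(v)/2 \pmod k$, the only admissible values are $d_H(v) \in \{d_G(v)/2 - k,\ d_G(v)/2,\ d_G(v)/2 + k\}$ — one more value than the statement claims.

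To eliminate the middle value $d_G(v)/2$ I would modify the target congruence: instead of $f(v) \equiv d_G(v)/2$, set $f(v) \equiv d_G(v)/2 + k/2 \pmod k$ when $k$ is even, or handle the sharper range by invoking the stronger interval. Actually the cleanest route uses Theorem~\ref{intro:thm:non-bipartite:6k-7}-style sharpness or, more directly, the strengthened Theorem~\ref{thm:essentially:factor:k}: here the point is that $G$ is $(6k-2)$-edge-connected and Eulerian, so one has room to force $d_H(v)$ into a half-open interval of length $k$ not containing $d_G(v)/2$. Concretely, apply Corollary~\ref{cor:bipartite:factor} with $f(v) \equiv d_G(v)/2 + k \pmod k$ is the same as before, so instead I would use the "furthermore" clause of Theorem~\ref{thm:essentially:factor:k} iteratively, or observe that the even order of $G$ lets the global parity absorb a shift so that the target is $d_G(v)/2 \pm k$ uniformly; then the interval $[d_G(v)/2-(k-1), d_G(v)/2+(k-1)]$ forces exactly $d_H(v) \in \{d_G(v)/2 - k, d_G(v)/2 + k\}$ — wait, that interval does not contain $d_G(v)/2 \pm k$. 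The right fix, which I expect to be the main obstacle, is to run the argument with the asymmetric bound of Theorem~\ref{intro:thm:non-bipartite:6k-7} specialized to the bipartite case, giving $d_G(v)/2 - (k-1) \le d_H(v) \le d_G(v)/2 + k$ (or its mirror), an interval of length $2k-1$; choosing $f(v) \equiv d_G(v)/2 + k \equiv d_G(v)/2$ still fails, so one must instead pick the congruence class $d_G(v)/2 + $ (a fixed nonzero residue) and verify global compatibility using evenness of $|V(G)|$ to fix up the single "slack" vertex. Thus the real work is bookkeeping: choosing residues $f$ so that (i) $f$ is compatible (using $|V(G)|$ even), and (ii) the degree interval supplied by the relevant theorem, being of length $< 2k$ and appropriately positioned, meets the residue class $f(v)$ in exactly the two points $d_G(v)/2 \pm k$. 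Everything else is routine.
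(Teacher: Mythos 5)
Your proposal stalls exactly at the crux and never recovers: you correctly observe that taking $f(v)\equiv d_G(v)/2 \pmod k$ only pins $d_H(v)$ down to the three values $d_G(v)/2-k$, $d_G(v)/2$, $d_G(v)/2+k$, and that shifting by $k$ modulo $k$ changes nothing, but the sequence of attempted fixes that follows (shifting by $k/2$, invoking an asymmetric interval, ``fixing up a slack vertex'') does not converge to a working argument. The missing idea is to change the modulus, not the shift: apply Corollary~\ref{cor:bipartite:factor} with the modulus $2k$ and the mapping $f(v)\equiv d_G(v)/2+k \pmod{2k}$. This residue class meets the interval $\bigl(d_G(v)/2-3k,\; d_G(v)/2+3k\bigr)$ in exactly the two points $d_G(v)/2\pm k$, and the corollary (for modulus $2k$) delivers $d_G(v)/2-(2k-1)\le d_H(v)\le d_G(v)/2+(2k-1)$, which lies strictly inside that interval. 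This also explains the two hypotheses you left unexploited: the edge-connectivity $6k-2\ge 3(2k)-3$ is exactly what Corollary~\ref{cor:bipartite:factor} needs for modulus $2k$, and the even order is what makes $f$ compatible, since $\sum_{v\in X}f(v)-\sum_{v\in Y}f(v)\equiv k(|X|-|Y|) \pmod{2k}$ and $|X|-|Y|$ is even precisely when $|V(G)|$ is. Your first-paragraph compatibility discussion (and the worry about odd versus even $k$) is thus aimed at the wrong congruence altogether.
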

\begin{proof}
{For each vertex $v$, define $f(v)=d_G(v)/2+k$ (mod $2k$), and let $(X,Y)$ be the bipartition of $G$. 
Obviously, $\sum_{v\in X}d_G(v)/2=|E(G)|/2= \sum_{v\in Y}d_G(v)/2$.
Since $|V(G)|$ is even, we must have $\sum_{v\in X}f(v)\stackrel{2k}{\equiv}\sum_{v\in Y}f(v)$.
Thus by Corollary~\ref{cor:bipartite:factor}, the graph $G$ has an $f$-factor $H$ such that for each vertex $v$,
$d_G(v)/2-3k <d_G(v)/2-(2k-1)\le d_H(v) \le d_G(v)/2+2k-1<d_G(v)/2+3k$.
Hence $H$ is the desired factor.
}\end{proof}
The following corollary is an improved version of Lemma 4.1 in~\cite{Merker-2017}.
Note the required edge-connectivity 3k-3 can be replaced by the condition $d_G(A)\ge 3k-3$, where $A$ is a subset of $V(G)$ 
 satisfying $0< |A\cap Y|< |Y|$, by replacing Corollary 3.8 in \cite{ModuloBounded} in the proof.
\begin{cor}
{Let $G$ be a $(3k-3)$-edge-connected bipartite graph on classes $X$ and $Y$, where each vertex in $X$ has even degree.
For every function $f:Y\rightarrow \mathbb{Z}_k$ satisfying
$\sum_{v\in Y}f(v)\stackrel{k}{\equiv}\frac{1}{2}|E(G)|$,
there exists a factor $H$ of $G$ such that 
\begin{enumerate}{
\item $d_{H}(v)=\frac{1}{2}d_G(v)$ for each $v\in X$.
\item $|d_H(v)- \frac{1}{2}d_G(v)|<k$ and 
$d_H(v)\stackrel{k}{\equiv} f(v)$ for each $v\in Y$.
}\end{enumerate}
}\end{cor}
\begin{proof}
{For each $v\in X$, define $f(v)=d_G(v)/2$ (mod $k$). According to the assumption, $\sum_{v\in X}f(v) \stackrel{k}{\equiv}\sum_{v\in X}d_G(v)/2=\frac{1}{2} |E(G)|\stackrel{k}{\equiv}\sum_{v\in Y}f(v)$.
Thus by Corollary~\ref{cor:bipartite:factor}, the graph $G$ has an $f$-factor $H$ such that for each vertex $v$,
$ d_G(v)/2+k <\lfloor d_G(v)/2 \rfloor-(k-1)
\le d_H(v) \le \lceil d_G(v)/2\rceil +(k-1)<d_G(v)/2+k$ which implies that
$|d_H(v)- \frac{1}{2}d_G(v)|<k$. For each $v\in X$, we therefore have $d_{H}(v)=d_G(v)/2$. This completes the proof.
}\end{proof}
For proving the remaining two theorems, we
need the following two lemmas.
Recall that a graph $G$ is said to be 
{\it $(m,l_0)$-partition-connected},
 if it can be decomposed into an $m$-tree-connected factor and a factor $F$ which admits an orientation such that for each vertex $v$, $d^+_F(v)\ge l_0(v)$,
where $l_0$ is a nonnegative integer-valued function on $V(G)$. 
\begin{lem}{\rm (\cite{ModuloBounded})}\label{Tree:lem:(k-1)-bound}
{Let $G$ be a graph, let $k$ be an integer, $k\ge 3$, and 
 let $p:V(G)\rightarrow \mathbb{Z}_k$ be a mapping with $|E(G)|\stackrel{k}{\equiv}\sum_{v\in V(G)}p(v)$.
Let $s$, $s_0$, and $l_0$ be three integer-valued functions on $V(G)$ satisfying $s(v)+s_0(v)+k-1\le d_G(v)$ and
$\max\{s(v),s_0(v)\}\le l_0(v)+(k-1)$ for each vertex $v$, and $\max\{s(z),s_0(z)\}\le l_0(z)$ for a vertex $z$. 
If $G$ is $(2k-2, l_0)$-partition-connected, then it has a $p$-orientation such that for each vertex $v$, 
$$s(v)\le d_{G}^+(v)\le d_G(v)-s_0(v).$$
}\end{lem}
\begin{lem}{\rm (\cite{ModuloBounded})}\label{lem:pre-orientation:tree-connected}
{Let $G$ be a graph of order at least two, let $k$ be an integer, $k\ge 3$, and let $p:V(G)\rightarrow \mathbb{Z}_k$ be a mapping with $|E(G)|\stackrel{k}{\equiv}\sum_{v\in V(G)}p(v)$. If $G$ is $(2k-2)$-tree-connected, then it has a $p$-orientation such that for each vertex~$v$, $$k/2-1\le d^+_G(v)\le d_G(v)-k/2+1.$$
}\end{lem}
The following theorem provides a partition-connected version for Theorem~\ref{thm:essentially:factor:k}.
\begin{thm}
{Let $G$ be a bipartite graph with bipartition $(X,Y)$, let $k$ be an integer, $k\ge 3$, and let $f:V(G)\rightarrow \mathbb{Z}_k$ be a mapping with $\sum_{v\in X}f(v) \stackrel{k}{\equiv}\sum_{v\in Y}f(v)$.
Let $s$, $s_0$, and $l_0$ be three integer-valued functions on $V(G)$ satisfying $s(v)+s_0(v)+k-1\le d_G(v)$ and
$\max\{s(v),s_0(v)\}\le l_0(v)+(k-1)$ for each vertex $v$, and $\max\{s(z),s_0(z)\}\le l_0(z)$ for a vertex $z$. 
If $G$ is $(2k-2, l_0)$-partition-connected, then it admits
an $f$-factor $H$ such that for each vertex $v$,
$$s(v) \le d_H(v) \le d_G(v)-s_0(v).$$
}\end{thm}
\begin{proof}
{
For each $v\in X$, define $ p(v) = f(v)$, $s'(v)=s(v)$, $s_0'(v)=s_0(v)$, and for each $v\in Y$, 
define $ p(v) = d_G(v)-f(v)$, $s'(v)=s_0(v)$, $s_0'(v)=s(v)$.
By the assumption, we must have $|E(G)|\stackrel{k}{\equiv}\sum_{v\in V(G)}p(v)$. 
By Lemma~\ref{Tree:lem:(k-1)-bound}, 
the graph $G$ has a $p$-orientation modulo $k$ such that for each vertex $v$,
$ s'(v) \le d^+_G(v) \le d_G(v)-s'_0(v)$.
Take $H$ to be the factor of $G$ consisting of all edges directed from $X$ to $Y$.
Since for all vertices $v\in X$, $d_H(v)=d_G^+(v)$, 
and for all vertices $v\in Y$, $d_H(v)=d_G(v)-d^+_G(v)$, the graph $H$ is the desired $f$-factor we are looking for.
}\end{proof}
An application of the following theorem is stated in Section~\ref{sec:modulo regular}.
\begin{thm}\label{thm:two-factors:modulo}
{Let $G$ be a bipartite graph of order at least two with bipartition $(X,Y)$, let $k$ be an integer, $k\ge 3$, and 
 let $f:V(G)\rightarrow \mathbb{Z}_k$ be a mapping with $\sum_{v\in X}f(v) \stackrel{k}{\equiv}\sum_{v\in Y}f(v)$.
If $G$ is $(2k-2)$-tree-connected, then it has an $f$-factor $H$ such that for each vertex $v$, 
$k/2-1 \le d_H(v)\le d_G(v)-k/2+1$.
}\end{thm}
\begin{proof}
{
For each $v\in X$, define $ p(v) = f(v)$, and for each $v\in Y$, define $ p(v) = d_{G}(v)-f(v)$.
By the assumption, we must have $|E(G)|\stackrel{k}{\equiv}\sum_{v\in V(G)}p(v)$. 
By Lemma~\ref{lem:pre-orientation:tree-connected}, the graph $G$ has a $p$-orientation
such that for each vertex $v$, $k/2-1\le d^+_G(v)\le d_G(v)-k/2+1$.
Define $H$ to be the factor of $G$ consisting of all edges directed from $X$ to $Y$.
It is easy to check that $H$ is the desired factor we are looking for. 
}\end{proof}
\subsection{Graphs with small bipartite index}
The following theorem establishes a non-bipartite version for Theorem~\ref{thm:essentially:factor:k} with a stronger version.
\begin{thm}\label{thm:bi:at-most:k-1}
{Let $G$ be a graph, let $k$ be an integer, $k\ge 3$, and let $f:V(G)\rightarrow \mathbb{Z}_k$ be a compatible mapping.
If $G$ is essentially $(3k-3)$-edge-connected and 
$d_G(v)\ge 2k-1+[f(v)]_k$ for each vertex $v$, and $e_G(X)+e_G(Y)\le k-1$ for a bipartition of $X,Y $ of $V(G)$,
then $G$ has an $f$-factor $H$ such that for each vertex~$v$,
$$\lfloor \frac{d_G(v)+s(v)}{2}\rfloor -(k-1)
\le d_H(v)\le 
 \lceil \frac{d_G(v)-s_0(v)}{2}\rceil +(k-1),$$
where $s$ and $s_0$ are two nonnegative integer-valued functions on $V(G)$ satisfying $s_0(v)+s(v)< 2k$ for each vertex $v$, and $e_G(X)+e_G(Y)+\frac{1}{2}\sum_{v\in V(G)}\max\{s(v),s_0(v)\}<k$.
}\end{thm}
\begin{proof}
{Let $M$ be the graph $G[X]\cup G[Y]$. Since $f$ is compatible, 
 there are two integers $x$ and $y$ with $0\le x\le e_G(X)=e_M(X)$ and $0\le y\le e_G(Y)=e_M(Y)$ such that 
$2y-2x \stackrel{k}{\equiv} \sum_{v\in Y}f(v)-\sum_{v\in X}f(v)$.
Let $M_1$ be a factor of $M$ such that $e_{M_1}(X)=x$ and $e_{M_1}(Y)=y$, and let $M_0=M\setminus E(M_1)$.
We are going to construct a new graph $L$ that plays an important role in the proof.
First, we add a new vertex $z_0$ to $G\setminus E(M)$. Next, for each edge $uv\in E(M)$,
 we add two edges $z_0u$ and $z_0v$ directed as follows.
Both them are directed toward $z_0$,
 if either $uv\in E(M_1)\cap E(G[X])$ or $uv\in E(M_0)\cap E(G[Y])$, 
and also directed away from $z_0$, 
 if either $uv\in E(M_0)\cap E(G[X])$ or $uv\in E(M_1)\cap E(G[Y])$.
 Note that we might have some multiple edges incident with $z_0$.
\begin{figure}[h]
 \centering
 \includegraphics[scale =2.2]{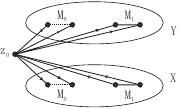}
 \caption{An orientation of all edges incident with $z_0$.}
 \label{L-z0}
\end{figure}
Call the resulting loopless graph $L$. Note also that $d_G(v)=d_L(v)$ for all $v\in V(G)$.
Define $p(z_0)=d^+_L(z_0)=2e_{M_0}(X)+2e_{M_1}(Y)$, and for each $v\in V(L)\setminus \{z_0\}$, define 
$$p(v)=
 \begin{cases}
d_{L}(v)-f(v), &\text{if $v\in Y$};\\
f(v), &\text{if $v\in X$}.
 \end{cases}$$
Thus
$$\sum_{v\in V(L)}p(v) 
\stackrel{k}{\equiv}
p(z_0)+\sum_{v\in V(G)}p(v)
\stackrel{k}{\equiv} 
2e_{M_0}(X)+2e_{M_1}(Y)+
\sum_{v\in X}f(v)+\sum_{v\in Y}(d_L(v)-f(v)),$$
and so
$$\sum_{v\in V(L)}p(v) 
\stackrel{k}{\equiv}2e_{M_1}(Y)-2e_{M_1}(X)+
\sum_{v\in X}f(v)- \sum_{v\in Y}f(v)+|E(L)|\stackrel{k}{\equiv}|E(L)|.$$
Obviously, 
$L$ is essentially $(3k-3)$-edge-connected, 
$d_{L}(v) = d_{G}(v)\ge 2k-1+[f(v)]_k=2k-1+[p(v)]_k$ for each $v\in X$, 
and $d_{L}(v) = d_{G}(v)\ge 2k-1+[f(v)]_k$ for each $v\in Y$
which can imply that $d_{L}(v) \ge 2k-1+[d_{L}(v) -f(v)]_k =2k-1+[p(v)]_k$.
In addition, $d_{L}(z_0)+\sum_{v\in V(G)}\max\{s'(v),s'_0(v)\}= 2|E(M)|+\sum_{v\in V(G)}\max\{s'(v),s'_0(v)\}< 2k$, 
where $s'(v)=s(v)$ and $s'_0(v)=s_0(v)$ when $v\in X$ and 
 $s'(v)=s_0(v)$ and $s'_0(v)=s(v)$ when $v\in Y$.
Therefore, 
by Theorem~\ref{thm:new-lower-bound}, the orientation of the edges of $L$ incident with $z_0$
 can be extended to a $p$-orientation of $L$ such that 
for each vertex $v$, 
$\lfloor (d_{L}(v)+s'(v))/2\rfloor-(k-1)\le d^+_{L}(v) \le \lceil (d_{L}(v)-s'_0(v)/2\rceil+(k-1).$
Let $F$ be the factor of $G$ consisting of all edges of $L-z_0$ directed from $X$ to $Y$.
Define $H= M_1\cup F$.
According to the construction of $H$, for each $v\in V(H)$, we have 
$$d_H(v)=d_{M_1}(v)+ d_{F}(v)=
 \begin{cases}
d^-_{L}(v), &\text{if $v\in Y$};\\
d^+_{L}(v), &\text{if $v\in X$}.
 \end{cases}$$
Thus
$\lfloor (d_L(v)+s'(v))/2\rfloor-(k-1)
\le d_H(v)\le 
 \lceil (d_L(v)-s'_0(v))/2\rceil+(k-1)$.
Hence it is not hard to check that $H$ is the desired $f$-factor we are looking for.
}\end{proof}
The following corollary plays an essential role in the proof of Theorem~\ref{thm:high-enough-tree-connectivity}
in Subsection~\ref{subsec:Improving-degree-bounds}.
\begin{cor}\label{cor:bi:at-most:k-1}
{Let $G$ be a graph, let $k$ be an integer, $k\ge 3$, and let $f:V(G)\rightarrow \mathbb{Z}_k$ be a compatible mapping.
If $G$ is $(3k-3)$-edge-connected and $e_G(X)+e_G(Y)\le k-1$ for a bipartition of $X,Y $ of $V(G)$,
then $G$ has an $f$-factor $H$ such that for each vertex~$v$,
$$\lfloor \frac{d_G(v)}{2}\rfloor -(k-1)
\le d_H(v)\le 
 \lceil \frac{d_G(v)}{2}\rceil +(k-1).$$
Furthermore, for an arbitrary given vertex $z$ of odd degree, the upper bound and the lower bound can be reduced by one.
}\end{cor}
\begin{proof}
{Apply Theorem~\ref{thm:bi:at-most:k-1} by setting $s(z)=s_0(z)=1$ and $s(v)=s_0(v)=0$ for all $v\in V(G)\setminus \{z\}$.
}\end{proof}
\label{subsec:Non-bipartite general graphs}
The following theorem establishes a non-bipartite version for Corollary~\ref{cor:bipartite:factor} and plays an essential role in the subsequent section.
Let $Q$ be a trail-decomposition of the edges of a graph $G$ and let $X\subseteq V(G)$. 
Here, we say that $Q$ is 
{\it $X$-parity trail-decomposition}, 
if every trail in $Q$ of odd size has exactly one end in $X$ 
and every trail in $Q$ of even size has both ends in either $X$ or $V(G)\setminus X$.
\begin{thm}\label{thm:non-bipartite:trail}
{Let $G$ be a graph, let $k$ be an integer, $k\ge 3$, and let $f:V(G)\rightarrow \mathbb{Z}_k$ be a compatible 
mapping. Let $G_0$ be a factor of $G$ such that its complement admits an $X$-parity trail-decomposition and
$e_{G_0}(X)+e_{G_0}(Y)=k-1$, where $X,Y $ is a bipartition of $V(G)$.
If $G_0$ is $(3k-3)$-edge-connected,
then $G$ has an $f$-factor $H$ such that for each vertex~$v$,
$$\lfloor \frac{d_G(v)}{2}\rfloor-(k-1) \le d_H(v) \le \lceil \frac{d_G(v)}{2}\rceil+(k-1).$$
Furthermore, for an arbitrary given vertex $z$ of odd degree, the upper bound and the lower bound can be reduced by one.
}\end{thm}
\begin{proof}
{Let $Q$ be an $X$-parity trail-decomposition of $G\setminus E(G_0)$.
Let $T$ be a graph with $V(T)=V(G)$ and for each trail in $Q$ having different end vertices $v$ and $u$ add the edge $uv$ in $T$; adding parallel edges if necessary. Note that $T$ is loopless.
For each vertex $v$, we denote by $t(v)$ the number of times trails pass through $v$ but not finish and start at $v$ plus the number of closed trails started at $v$.
Let $M=G_0[X]\cup G_0[Y]$ so that $|E(M)|= k-1$.
Since $f$ is compatible, $\sum_{v\in X}f(v)-\sum_{v\in Y}f(v)$ is even when $k$ is even.
On the other hand, since $Q$ is an $X$-parity trail-decomposition, one can conclude that $\sum_{v\in Y}t(v)-\sum_{v\in X}t(v)+e_T(Y)-e_T(X)$ must be even (by splitting into similar expressions corresponding to trails of $Q$).
Thus there is a factor $M_1$ of $M$ such that 
 $2e_{M_1}(Y)-2e_{M_1}(X) \stackrel{k}{\equiv} \sum_{v\in Y}(f(v)-t(v))- \sum_{v\in X}(f(v)-t(v))-e_T(Y)+e_T(X)$.
Let $M_0=M\setminus E(M_1)$.

We are going to construct a new graph $L$ similarly to the proof of Theorem~\ref{thm:bi:at-most:k-1}.
First, we add a new vertex $z_0$ to $G_0\setminus E(M)$. Next, for each edge $uv\in E(M)$,
 we add two edges $z_0u$ and $z_0v$ directed as follows.
Both them are directed toward $z_0$,
 if either $uv\in E(M_1)\cap E(G[X])$ or $uv\in E(M_0)\cap E(G[Y])$, 
and also directed away from $z_0$, 
 if either $uv\in E(M_0)\cap E(G[X])$ or $uv\in E(M_1)\cap E(G[Y])$.
Call the resulting loopless graph $L$.
 Note that for each $v\in V(G)$, 
$d_G(v)=d_{L}(v)+d_{T}(v)+2t(v)$.
Define $p(z_0)=d^+_L(z_0)=2e_{M_0}(X)+2e_{M_1}(Y)$, and 
for each $v\in V(L)\setminus \{z_0\}$, define 
$$p(v)=
 \begin{cases}
d_{L}(v)+d_{T}(v)-(f(v)-t(v)), &\text{if $v\in Y$};\\
f(v)-t(v), &\text{if $v\in X$}.
 \end{cases}$$
Thus
$$\sum_{v\in V(L)}p(v) 
\stackrel{k}{\equiv}
p(z_0)+\sum_{v\in V(G)}p(v)
\stackrel{k}{\equiv} 
2e_{M_0}(X)+2e_{M_1}(Y)+
\sum_{v\in X}(f(v)-t(v))+\sum_{v\in Y}(d_{L}(v)+d_{T}(v)-(f(v)-t(v))),$$
and so
$$\sum_{v\in V(L)}p(v) 
\stackrel{k}{\equiv}2e_{M_1}(Y)-2e_{M_1}(X)+e_T(Y)-e_T(X)+
\sum_{v\in X}(f(v)-t(v))- \sum_{v\in Y}(f(v)-t(v))+|E(L)|+|E(T)|.$$
This implies that $\sum_{v\in V(L)}p(v)\stackrel{k}{\equiv}|E(L)|+|E(T)|$.
Obviously, $L$ is essentially $(3k-3)$-edge-connected, 
$d_{L}(v) = d_{G_0}(v)\ge 3k-3 \ge 2k-1+[p(v)]_k$ for all $v\in V(L)\setminus \{z_0\}$, 
and $d_{L}(z_0)+d_{T}(z_0)+1\le 2|E(M)|+1= 2k-1$. 
Thus
by Theorem~\ref{thm:new-lower-bound}, the orientation of the edges of $L$ incident with $z_0$
 can be extended to a $p$-orientation of $L\cup T$ such that 
for each vertex $v$, 
$\lfloor \frac{1}{2}(d_{L}(v)+d_{T}(v))\rfloor-(k-1) \le 
d^+_{T}(v)+d^+_{L}(v) \le 
\lceil \frac{1}{2}(d_{L}(v)+d_{T}(v))\rceil+(k-1).$
In addition, we can have 
$\lfloor \frac{1}{2}(d_{L}(z)+d_{T}(z)+1)\rfloor-(k-1) \le 
d^+_{T}(z)+d^+_{L}(z) \le 
\lceil \frac{1}{2}(d_{L}(z)+d_{T}(z)-1)\rceil+(k-1).$

Define $F$ to be the factor of $G_0$ consisting of all edges of $L-z_0$ directed from $X$ to $Y$.
Let $v_0,\ldots, v_n$ be an arbitrary trail in $Q$ such that the edge $v_0v_n$ directed from $v_0$ to $v_n$ in $T$.
If $v_0\in X$, we select all edges $v_{2i}v_{2i+1}$ of this trail, and if $v_0\in Y$, we select all edges $v_{2i+1}v_{2i+2}$.
Let $T'$ be the factor of $T$ consists of all selected edges.
Since $Q$ is an $X$-parity trail-decomposition, we must have
 $d_{T'}(v)=d^+_{T}(v)+t(v)$ for each $v\in X$,
and $d_{T'}(v)=d^-_{T}(v)+t(v)$ for each $v\in Y$.
Define $H= M_1\cup F\cup T'$.
According to the construction of $H$, for each $v\in V(H)$, we have 
$$d_H(v)=d_{M_1}(v)+ d_{F}(v)+d_{T'}(v)=
 \begin{cases}
d^-_{L}(v)+d^-_{T}(v)+t(v), &\text{if $v\in Y$};\\
d^+_{L}(v)+d^+_{T}(v)+t(v), &\text{if $v\in X$}.
 \end{cases}$$
Therefore, 
$\lfloor d_G(v)/2\rfloor-(k-1)
\le d_H(v)\le 
 \lceil d_G(v)/2\rceil+(k-1)$,
and also $\lfloor (d_G(z)+1)/2\rfloor-(k-1)
\le d_H(z)\le 
 \lceil (d_G(z)-1)/2\rceil+(k-1)$.
Hence it is not hard to check that $H$ is the desired $f$-factor we are looking for.
}\end{proof}
%
%
%
%
%
%
%
%
%
%
%
%
\section{Factors modulo $k$: General graphs}
Our aim in this section is to generalize Corollary~\ref{cor:bipartite:factor} 
to general graphs using the same degree bounds and characterize the exceptional graphs with high enough edge-connectivity. We begin with a similar version by increasing the upper bound $\lceil d_G(v)/2\rceil+k-1$ to 
 $\lfloor d_G(v)/2\rfloor+k$.
\subsection{General graphs}
\label{subsec:General graphs}
The following theorem improves the condition of edge-connectivity of Theorem~\ref{intro:thm:non-bipartite:6k-7}.
\begin{thm}\label{thm:3k-3:non-bipartite}
{Let $G$ be a graph, let $k$ be a positive integer, and let $f:V(G)\rightarrow \mathbb{Z}_k$ be a compatible 
mapping.
If $G$ contains a $(3k-3)$-edge-connected bipartite factor,
then $G$ has an $f$-factor $H$ such that for each vertex~$v$,
$$\lfloor \frac{d_G(v)}{2}\rfloor-(k-1) \le d_H(v) \le \lfloor \frac{d_G(v)}{2}\rfloor+k.$$
}\end{thm}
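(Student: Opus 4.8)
The plan is to reduce to the almost-bipartite case already handled by Theorem~\ref{thm:non-bipartite:trail} by peeling off an Eulerian factor so that what remains is a $(3k-3)$-edge-connected bipartite factor together with a small amount of "extra" structure. Concretely, let $B$ be a $(3k-3)$-edge-connected bipartite factor of $G$ with bipartition $(X,Y)$, chosen with the maximum number of edges (so that, by the argument in the proof of Theorem~\ref{thm:bipartite:factor}, $d_B(A)\ge\lceil d_G(A)/2\rceil$ for every vertex set $A$; in particular $d_B(A)\ge (3k-3)$ cannot be improved but the new inequality will help control the leftover graph). Set $G_0=G\setminus E(B)$, so $G_0=G[X]\cup G[Y]$ together with whatever edges of $G[X,Y]$ were not placed in $B$. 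The key point is that $E(G[X])\cup E(G[Y])$ may be large, so unlike in Corollary~\ref{cor:bi:at-most:k-1} we cannot assume $e_G(X)+e_G(Y)\le k-1$; instead we must absorb most of those edges into an Eulerian factor whose degrees are all even, hence contribute $0$ modulo the parity bookkeeping while shifting each vertex degree by a controlled even amount.

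First I would split $G_0$ into $G_0=E\cup G_0'$ where $E$ is an Eulerian factor of $G_0$ chosen so that $G_0'=G[X,Y]\setminus E(B)$ together with at most $k-1$ of the non-crossing edges; more precisely, I want $e_{G_0'}(X)+e_{G_0'}(Y)=\min\{k-1,\,e_G(X)+e_G(Y)\}$, exactly the hypothesis of Theorem~\ref{thm:non-bipartite:trail}. This is possible because $G[X]$ and $G[Y]$ each decompose into an Eulerian part (a maximal even subgraph) plus a forest, and one can move all but $k-1$ of the "forest" edges into the Eulerian factor by adjusting along the spanning trees of $B$ — this is the standard parity-correction trick used in Corollary~\ref{cor:decomposition:bipartite-index}, and the $(3k-3)$-edge-connectedness (hence high tree-connectedness after passing to $B$, or directly using that $B$ contains many spanning trees) of $B$ guarantees enough room to carry it out while keeping $B\cup(\text{small part})$ still $(3k-3)$-edge-connected. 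Write $T$ for the trail factor: take $T$ to be $E$ together with $B\setminus(\text{the sub-bipartite-factor we keep as }G_0')$, viewed via an $X$-parity trail-decomposition — an Eulerian graph trivially has such a decomposition (into closed trails, all of even size with both ends equal), and the bipartite edges we throw back into $T$ can be appended as length-one trails, which are odd and have one end in $X$, matching the definition of $X$-parity trail-decomposition.

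Then I would apply Theorem~\ref{thm:non-bipartite:trail} to the decomposition $G=T\cup G_0'$: its hypotheses are precisely that $T$ admits an $X$-parity trail-decomposition, that $e_{G_0'}(X)+e_{G_0'}(Y)=\min\{k-1,e_G(X)+e_G(Y)\}$, and that $G_0'$ is $(3k-3)$-edge-connected — the last holding because $G_0'$ contains the $(3k-3)$-edge-connected factor $B$ minus at most $k-1$ edges, so I must actually be careful here and instead keep $B$ itself inside $G_0'$ and route only the truly extra edges into $T$; this is the one delicate bookkeeping point. Theorem~\ref{thm:non-bipartite:trail} then yields an $f$-factor $H$ with $\lfloor d_G(v)/2\rfloor-(k-1)\le d_H(v)\le\lceil d_G(v)/2\rceil+(k-1)$, and with the upper bound reduced by one at any prescribed odd-degree vertex. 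To get the claimed sharper upper bound $\lfloor d_G(v)/2\rfloor+k$ uniformly, observe that $\lceil d_G(v)/2\rceil+(k-1)=\lfloor d_G(v)/2\rfloor+k$ when $d_G(v)$ is odd and $=\lfloor d_G(v)/2\rfloor+(k-1)\le\lfloor d_G(v)/2\rfloor+k$ when $d_G(v)$ is even, so the bound in Theorem~\ref{thm:non-bipartite:trail} already implies the one stated here — no extra work is needed for the upper bound, and the lower bound is identical.

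The main obstacle I expect is the decomposition step: arranging that the leftover non-bipartite edges beyond the budget of $k-1$ can all be absorbed into an Eulerian factor while simultaneously (i) preserving $(3k-3)$-edge-connectedness of the retained bipartite factor $G_0'$ and (ii) maintaining the $X$-parity structure of $T$. The parity-correction along spanning trees of $B$ is routine in isolation (it is exactly Corollary~\ref{cor:decomposition:bipartite-index}'s mechanism), but here it must be done without destroying the edge-connectivity margin, which forces one to choose $B$ with the max-edge / "$d_B(A)\ge\lceil d_G(A)/2\rceil$" property of Theorem~\ref{thm:bipartite:factor} rather than an arbitrary $(3k-3)$-edge-connected bipartite factor; getting that interplay right is the crux of the argument.
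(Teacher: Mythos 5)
Your overall strategy---split off the excess non-crossing edges into a factor $T$ with an $X$-parity trail-decomposition and invoke Theorem~\ref{thm:non-bipartite:trail}---is the same as the paper's, but the execution has a genuine gap in exactly the step you flag as delicate. The problem is that the leftover edges of $G[X]\cup G[Y]$ (beyond the budget of $k-1$ kept in $G_0$) cannot in general be packed into \emph{any} factor admitting an $X$-parity trail-decomposition: every trail consisting of edges inside $X$ has both of its ends in $X$, so by the definition it is admissible only if it has even size. Your proposed absorption into an Eulerian factor does not fix this---your claim that closed trails are ``all of even size'' is false, and an odd closed trail inside $X$ has both ends in $X$ and odd size, hence is inadmissible; more basically, $G[X]\cup G[Y]$ need not contain a spanning Eulerian subgraph covering the excess edges at all (take $G[X]$ to be a large matching). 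Your fallback, parity-correction along spanning trees of $B$, moves crossing edges out of the bipartite factor and threatens its $(3k-3)$-edge-connectedness; you notice this and retract it, but then no mechanism remains for the edges that cannot be placed in $T$. Without handling them, the hypotheses of Theorem~\ref{thm:non-bipartite:trail} are simply not met whenever $e_G(X)+e_G(Y)>k-1$ and the excess is not decomposable into even same-side trails.

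The paper's resolution is the missing idea: after fixing $G_0$ to contain \emph{all} crossing edges of $G$ plus $\min\{k-1,e_G(X)+e_G(Y)\}$ non-crossing ones, greedily extract length-two trails from the remaining non-crossing edges to form $T$ (each such trail lies inside one side, so it has even size with both ends on that side, which is admissible). What is left over is a matching $M$; one then applies Theorem~\ref{thm:non-bipartite:trail} to $G_0\cup T$ with the shifted target $f'(v)=f(v)-d_M(v)$ and adds $M$ back at the end. The $\pm 1$ degree contribution of $M$ is precisely what converts the bound $\lceil (d_G(v)-d_M(v))/2\rceil+(k-1)$ into the stated $\lfloor d_G(v)/2\rfloor+k$. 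Note also that the maximum-edge choice of $B$ with $d_B(A)\ge\lceil d_G(A)/2\rceil$, which you identify as the crux, is not needed: one only uses the bipartition supplied by the given $(3k-3)$-edge-connected bipartite factor, and adding all remaining crossing edges to it can only help.
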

\begin{proof}
{The special case $k=1$ can be proved by Corollary~\ref{cor:Eulerian:1/2}.
More precisely, 
when $G$ is not Eulerian, 
we need to add an artificial vertex $z$ and join it to all vertices with odd degrees.
Moreover, Theorem~\ref{Factor:modulo2:thm:2:edge} confirms the case $k=2$.
So, suppose $k\ge 3$. Let $X,Y$ be a bipartition of $V(G)$ such that $G[X,Y]$ is $(3k-3)$-edge-connected.
 Define $G_0$ to be a factor of $G$ containing all edges of $G[X,Y]$ such that 
$e_{G_0}(X)+e_{G_0}(Y)=\min\{k-1, e_G(X)+e_G(Y)\}$. 
If $G_0=G$, then the assertion follows from Corollary~\ref{cor:bi:at-most:k-1}.
Thus we may assume that $e_{G_0}(X)+e_{G_0}(Y)\ge k-1$.
Let $T$ be a factor of $G$ obtained by selecting edge-disjoint trails of length two from $G\setminus E(G_0)$ as long as possible. Since both ends of each selected trails lie either in $X$ or in $Y$, the graph $T$ admits an $X$-parity trail-decomposition. Let $M=G\setminus (E(G_0)\cup E(T))$. According to the construction of $T$, the graph $M$ must be a matching. For each vertex $v$, define $f'(v)=f(v)-d_M(v)$ (mod $k$).
Since $f$ is compatible with $G$, $\sum_{v\in V(G)}f'(v)$ must be even when $k$ is even.
Therefore, $f'$ must be compatible with $G_0\cup T$ by applying Theorem~\ref{thm:compatible:sufficient} (iii).
Thus by Theorem~\ref{thm:non-bipartite:trail}, the graph $G_0\cup T$ has an $f'$-factor $F$ such that for each vertex $v$, 
$\lfloor (d_G(v)-d_M(v))/2 \rfloor -(k-1)
\le d_F(v)\le 
 \lceil (d_G(v)-d_M(v))/2\rceil +(k-1).$
Hence it is not hard to check that $F\cup M$ is the desired $f$-factor we are looking for.
}\end{proof}
\begin{cor}\label{cor:non-bipartite:6k-7}
{Let $G$ be a graph, let $k$ be a positive integer, and let $f:V(G)\rightarrow \mathbb{Z}_k$ be a mapping.
 Assume that $(k-1)\sum_{v\in V(G)}f(v)$ is even and $bi(G)\ge k_0-1$, where $k_0=k$ when $k$ is odd, and $k_0=k/2$ when $k$ is even. If $G$ is $(6k-7)$-edge-connected, then it has an $f$-factor $H$ such that for each vertex~$v$,
$$\lfloor \frac{d_G(v)}{2}\rfloor-(k-1) \le d_H(v) \le \lfloor \frac{d_G(v)}{2}\rfloor+k.$$
}\end{cor}
\begin{proof}
{By Theorem~\ref{thm:bipartite:factor:edge-version}, the graph $G$ has a bipartite $(3k-3)$-edge-connected factor.
Also, by Theorem~\ref{thm:compatible:sufficient}, the mapping $f$ is compatible with $G$.
Thus the assertion follows from Theorem~\ref{thm:3k-3:non-bipartite}.
}\end{proof}
The following corollary is a supplement of Corollary~\ref{cor:Eulerian:even-k} for the special case $k=2$.
\begin{cor}
{Every non-bipartite $18$-edge-connected Eulerian graph $G$ of even size admits a factor $H$ 
such that for each vertex $v$, $$d_H(v)\in \{d_G(v)/2-2,d_G(v)/2+2\}.$$
}\end{cor}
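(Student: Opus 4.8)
The plan is to prove this as the non-bipartite analogue of Corollary~\ref{cor:Eulerian:even-k} with $k=2$: I would run that argument using Corollary~\ref{cor:non-bipartite:6k-7} in place of Corollary~\ref{cor:bipartite:factor}, and --- the one thing to get right --- with the modulus $4$ rather than $2$, since the target degrees differ from $d_G(v)/2$ by $2$. For every vertex $v$ set $f(v)=d_G(v)/2+2 \pmod{4}$; this is legitimate because $G$ is Eulerian, so every degree is even. I would then apply Corollary~\ref{cor:non-bipartite:6k-7} with the integer $4$ in the role of its $k$.

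To do so I would verify the hypotheses of that corollary for $k=4$. Since $4$ is even, the ``moreover'' version requires only $bi(G)\ge 4/2-1=1$, which holds exactly because $G$ is non-bipartite. The parity hypothesis, that $(4-1)\sum_{v\in V(G)}f(v)$ be even, reduces to $\sum_v f(v)$ being even; as reduction modulo $4$ preserves parity, $\sum_v f(v)\equiv \sum_v d_G(v)/2=|E(G)| \pmod{2}$, which is even precisely because $G$ has even size. Finally $G$ is $18$-edge-connected, hence $(6\cdot 4-7)=17$-edge-connected. Corollary~\ref{cor:non-bipartite:6k-7} then yields an $f$-factor $H$ with $\lfloor d_G(v)/2\rfloor-3\le d_H(v)\le\lfloor d_G(v)/2\rfloor+4$ for every vertex $v$.

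It remains to read off the degree set. Since $G$ is Eulerian, $\lfloor d_G(v)/2\rfloor=d_G(v)/2$, so $d_H(v)$ is an integer in $[d_G(v)/2-3,\; d_G(v)/2+4]$ satisfying $d_H(v)\equiv d_G(v)/2+2 \pmod{4}$. The only integers in that interval congruent to $d_G(v)/2+2$ modulo $4$ are $d_G(v)/2-2$ and $d_G(v)/2+2$, which gives the assertion.

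I do not expect a serious obstacle, as the real content lies in Corollary~\ref{cor:non-bipartite:6k-7}; the delicate steps are only bookkeeping --- working with modulus $4$ instead of $2$, checking that the parity hypothesis is not lost under reduction modulo $4$, and noting that non-bipartiteness is precisely what supplies $bi(G)\ge 1$. The one genuine technicality is the role of loops: a loop lies in no bipartite factor and in no edge cut, so the natural reading --- consistent with Corollary~\ref{cor:Eulerian:even-k}, whose bipartite hypothesis already excludes loops --- is to take $G$ loopless, which I would assume at the outset or reduce to first.
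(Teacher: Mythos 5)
Your proposal is correct and follows essentially the same route as the paper: define $f(v)=d_G(v)/2+2 \pmod 4$, verify $bi(G)\ge 1$ and the parity condition from non-bipartiteness and even size, and apply Corollary~\ref{cor:non-bipartite:6k-7} with $k=4$ to get $d_H(v)\in[d_G(v)/2-3,\,d_G(v)/2+4]$, which pins down the two admissible values. The extra remark about loops is a harmless refinement not present in the paper's own proof.
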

\begin{proof}
{For each $v$, define $f(v)=d_G(v)/2+2$ (mod $4$).
Since $G$ is non-bipartite and $|E(G)|$ is even,
 $bi(G)\ge 1=4/2-1$ and $\sum_{v\in V(G)}f(v)\stackrel{2}{\equiv}|E(G)|\stackrel{2}{\equiv}0$.
 This implies that $f$ is compatible with $G$ (modulo $4$) using Theorem~\ref{thm:compatible:sufficient}.
Thus by applying Corollary~\ref{cor:non-bipartite:6k-7} with $k=4$,
the graph $G$ has an $f$-factor $H$ such that for each vertex~$v$, $d_G(v)/2-3 \le d_H(v) \le d_G(v)/2+4$.
Since $d_H(v)\stackrel{4}{\equiv}d_G(v)/2+2$, we must have $d_H(v)\in \{d_G(v)/2-2,d_G(v)/2+2\}$.
Hence the proof is completed.
}\end{proof}
The following result is an interesting application of Theorem~\ref{thm:3k-3:non-bipartite}.
This result will be refined for $6k$-tree-connected graphs
by replacing Theorem~\ref{thm:high-enough-tree-connectivity} in the proof.
\begin{cor}\label{cor:k-1/2}
{Let $G$ be a graph, let $k$ be a positive integer, and let $f$ be a positive integer-valued function on $V(G)$ satisfying
 $f(v) \le \frac{1}{2}d_G(v)< f(v)+k$ for each vertex $v$. 
Assume that $f$ is compatible with $G$ (modulo $k$).
If $G$ is $(6k-7)$-edge-connected, then $G$ has a factor $H$ such that for each vertex $v$,
$$d_H(v)\in \{f(v),f(v)+k\}.$$
}\end{cor}
\begin{proof}
{By Theorem~\ref{thm:bipartite:factor:edge-version}, the graph $G$ has a bipartite $(3k-3)$-edge-connected factor. Thus by Theorem~\ref{thm:3k-3:non-bipartite}, the graph $G$ has an $f$-factor $H$ (mod $k$) such that for each vertex $v$, $f(v) -(k-1)\le \lfloor d_G(v)/2\rfloor -(k-1)\le d_H(v) \le \lfloor d_G(v)/2\rfloor+k\le f(v)+2k-1$.
Hence $H$ is the desired factor.
}\end{proof}
We can also formulate the following theorem similar to Theorem~\ref{thm:3k-3:non-bipartite}.
\begin{thm}
{Let $G$ be a graph of order at least two, let $k$ be an integer, $k\ge 3$, and let $f:V(G)\rightarrow \mathbb{Z}_k$ be a compatible mapping. 
If $G$ contains a $(2k-2)$-tree-connected bipartite factor, then it has an $f$-factor $H$ such that for each vertex $v$, 
$k/2-1\le d_H(v)\le d_G(v)-k/2+1$.
}\end{thm}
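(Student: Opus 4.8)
The plan is to mirror Theorem~\ref{thm:3k-3:non-bipartite}, but to exploit that the estimate furnished by Theorem~\ref{thm:two-factors:modulo} leaves a large gap below $d_G(v)$; this slack lets one bypass the trail decomposition of Theorem~\ref{thm:non-bipartite:trail} entirely and simply discard all but a bounded number of the monochromatic edges. Fix a $(2k-2)$-tree-connected bipartite factor $B=G[X,Y]$ of $G$, with bipartition $(X,Y)$, and put $R=E(G)\setminus E(B)$, the set of edges of $G$ with both ends in $X$ or both ends in $Y$. Since $k\ge 3$, the graph $B$ is connected and non-trivial and $d_B(v)\ge 2k-2$ for every vertex $v$, so $d_G(v)\ge 2k-2$; in particular the target interval $[\,k/2-1,\; d_G(v)-k/2+1\,]$ is non-degenerate.

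First I would extract a small correction set $M^{*}\subseteq R$ from the compatibility of $f$ with respect to the bipartition $(X,Y)$. That compatibility supplies either an integer $x$ with $0\le x\le e_G(X)$ and $\sum_{v\in X}f(v)-2x\equiv\sum_{v\in Y}f(v)\pmod k$, or an integer $y$ with $0\le y\le e_G(Y)$ and $\sum_{v\in X}f(v)\equiv\sum_{v\in Y}f(v)-2y\pmod k$. Take $M^{*}$ to be any $x$ edges of $G[X]$ in the first case and any $y$ edges of $G[Y]$ in the second (available because $e_G(X)\ge x$, resp.\ $e_G(Y)\ge y$), and set $f'(v)=f(v)-d_{M^{*}}(v)$. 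Since $\sum_{v\in X}d_{M^{*}}(v)-\sum_{v\in Y}d_{M^{*}}(v)$ equals $2x$ in the first case and $-2y$ in the second, a one-line check gives $\sum_{v\in X}f'(v)\equiv\sum_{v\in Y}f'(v)\pmod k$.

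Next I would apply Theorem~\ref{thm:two-factors:modulo} to $B$ and $f'$ — its proof (via Theorem~\ref{thm:pre-orientation:tree-connected}) uses only $\sum_{v\in X}f'(v)\equiv\sum_{v\in Y}f'(v)$, which is exactly what we arranged — to obtain an $f'$-factor $H'$ of $B$ with $k/2-1\le d_{H'}(v)\le d_B(v)-k/2+1$ for every $v$. Set $H=H'\cup M^{*}$. Since $H'\subseteq E(B)$ and $M^{*}\subseteq R$ are edge-disjoint, $d_H(v)=d_{H'}(v)+d_{M^{*}}(v)\equiv f'(v)+d_{M^{*}}(v)=f(v)\pmod k$, and since $d_{M^{*}}(v)\le d_R(v)=d_G(v)-d_B(v)$,
$$k/2-1\le d_{H'}(v)\le d_H(v)\le\big(d_B(v)-k/2+1\big)+d_{M^{*}}(v)\le d_G(v)-k/2+1,$$
so $H$ is the desired $f$-factor.

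I do not foresee a real obstacle here: the argument is a one-step reduction to Theorem~\ref{thm:two-factors:modulo} via the device of pulling a bounded correction set out of the monochromatic edges. The only care needed is the bookkeeping around $M^{*}$ — checking that \emph{each} alternative in the definition of compatibility produces a workable $M^{*}$, and that a loop placed in $M^{*}$ still contributes $2$ to $d_{M^{*}}(v)$ consistently with $d_R$ and $d_G$. The conceptual difference from Theorem~\ref{thm:3k-3:non-bipartite}, where the $+(k-1)$ in the bipartite estimate sits \emph{above} $d_B(v)/2$ and forces the trail machinery, is exactly that here the bipartite tree-connected bound $d_{H'}(v)\le d_B(v)-k/2+1$ lies far enough below $d_B(v)$ that the discarded monochromatic edges at $v$ can be re-inserted at no cost.
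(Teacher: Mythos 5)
Your proof is correct, and it takes a genuinely different (and cleaner) route than the paper. The paper's own proof is a one-line instruction to apply Theorem~\ref{thm:two-factors:modulo} ``using a similar argument'' to the proof of Theorem~\ref{thm:3k-3:non-bipartite}; spelled out, that argument splits the monochromatic edges into a bounded piece absorbed into the bipartite factor, a family of length-two trails forming an $X$-parity trail-decomposition, and a leftover matching, with the trail-based Theorem~\ref{thm:non-bipartite:trail} doing the heavy lifting. You replace all of that with a single observation: since the upper bound $d_G(v)-k/2+1$ is anchored at $d_G(v)$ rather than at $d_G(v)/2$, every monochromatic edge re-inserted at $v$ raises the bound by exactly as much as it raises $d_H(v)$, so one may discard all monochromatic edges except a correction set $M^{*}$ of size $x$ (or $y$) dictated by compatibility with respect to $(X,Y)$, apply Theorem~\ref{thm:two-factors:modulo} to the bipartite factor with $f-d_{M^{*}}$, and glue $M^{*}$ back on. The arithmetic ($\sum_{v\in X}d_{M^{*}}(v)=2x$, hence $\sum_{v\in X}f'(v)\stackrel{k}{\equiv}\sum_{v\in Y}f'(v)$) and the degree bookkeeping both check out, and your remark that the proof of Theorem~\ref{thm:two-factors:modulo} consumes only this balance condition --- not compatibility of $f'$ with respect to every bipartition of $B$ --- correctly closes the one formal gap in invoking that theorem, since $(2k-2)$-tree-connectivity falls just short of the $(2k-1)$-edge-connectivity the paper uses to upgrade one-bipartition compatibility to full compatibility. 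What your route buys is a short, self-contained reduction that avoids the trail machinery entirely; the trail decomposition is genuinely needed only when the target degrees are centered at $d_G(v)/2$, as in Theorem~\ref{thm:3k-3:non-bipartite}, and nothing is lost here by bypassing it.
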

\begin{proof}
{Let $X,Y$ be a bipartition of $V(G)$ such that $G[X,Y]$ is $(2k-2)$-tree-connected.
Let $G_0=G[X,Y]$.
Since $f$ is compatible, there are two nonnegative integers $x$ and $y$ with $x\le e_G(X)$ and 
$y\le e_G(Y)$ and
 $\sum_{v\in X}f(v)-2x \stackrel{k}{\equiv} \sum_{v\in Y}f(v)-2y$.
Let $M$ be a factor of $G[X]\cup G[Y]$ such that $e_M(X)=x$ and $e_M(Y)=y$.
 For each vertex $v$, define $f'(v)=f(v)-d_M(v)$. 
Obviously, $\sum_{v\in X}f'(v) \stackrel{k}{\equiv}\sum_{v\in Y}f'(v)$.
Thus by Theorem~\ref{thm:two-factors:modulo}, the graph $G_0$ has an $f'$-factor $F$ such that for each vertex $v$, 
$k/2-1\le d_F(v)\le d_{G_0}(v)-k/2+1$.
Hence it is not hard to check that $F\cup M$ is the desired $f$-factor we are looking for.
}\end{proof}
%
%
%
%
%
%
%
%
\subsection{Improving degree bounds: highly edge-connected graphs}
\label{subsec:Improving-degree-bounds}
The following result improves the upper bound stated in Theorem~\ref{thm:3k-3:non-bipartite} for highly edge-connected graphs.
\begin{thm}\label{thm:high-enough-tree-connectivity}
{Let $G$ be a graph with $z\in V(G)$, let $k$ be a positive integer, and let $f:V(G)\rightarrow \mathbb{Z}_k$ be a compatible
mapping. 
If $G$ is $(6k-2)$-tree-connected, 
then $G$ has an $f$-factor $H$ such that for each vertex~$v$,
$$\lfloor \frac{d_G(v)}{2}\rfloor-(k-1)\le d_H(v) \le 
 \begin{cases}
\lfloor \frac{d_G(v)}{2}\rfloor+k, &\text{when $v= z$};\\
\lceil \frac{d_G(v)}{2}\rceil+(k-1), &\text{otherwise}.
 \end{cases}$$
Furthermore, for the vertex $z$, the upper bound can be reduced to $\lceil d_G(z)/2\rceil+(k-1)$ if and only if one of the following conditions hold:
\begin{enumerate}{
\item [$\bullet$] $k$ is even.
\item [$\bullet$] $G$ has a vertex of odd degree.
\item [$\bullet$] $G$ is an Eulerian graph of even size.
\item [$\bullet$] There is a vertex $v$ for which $f(v)\not \stackrel{k}{\equiv}d_G(v)/2$.
}\end{enumerate}
}\end{thm}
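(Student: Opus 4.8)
The plan is to reduce to two tools already at hand: the bounded-degree modulo-$k$ factor theorem for graphs of bipartite index at most $k-1$ (Corollary~\ref{cor:bi:at-most:k-1}), and the ``half-factor'' of an Eulerian graph (Corollary~\ref{cor:Eulerian:1/2}); the hypothesis of $(6k-2)$-tree-connectedness is used only to split off an Eulerian factor while still leaving a bipartite factor that is $(3k-3)$-tree-connected. The case $k=2$ follows from Theorem~\ref{Factor:modulo2:thm:2:edge} (there the sharp bound $\lceil d_G(v)/2\rceil+(k-1)$ holds at every vertex and no exceptional vertex arises, consistent with the ``$k$ odd'' clause). So assume $k\ge 3$. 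If $bi(G)\le k-1$, take the bipartition $(X,Y)$ with $e_G(X)+e_G(Y)=bi(G)$ furnished by Theorem~\ref{thm:bipartite:factor} (unique by Lemma~\ref{lem:unique-bipartition}, since $6k-2\ge 2bi(G)+1$); then $G[X,Y]$ is $(3k-1)$-tree-connected, hence $(3k-3)$-edge-connected, and Corollary~\ref{cor:bi:at-most:k-1}, applied with $z$ and its reduction when $d_G(z)$ is odd, already gives an $f$-factor with $\lfloor d_G(v)/2\rfloor-(k-1)\le d_H(v)\le\lceil d_G(v)/2\rceil+(k-1)$ at \emph{every} vertex. So assume in addition $bi(G)\ge k$. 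Running the argument of Corollary~\ref{cor:decomposition:bipartite-index} with the parameter $k-1$ in place of $k$ (note $6k-2=2(3k-3)+4$), decompose $G=G_1\cup G_2$ with $G_1$ Eulerian, $G_2[X,Y]$ being $(3k-3)$-tree-connected for the above bipartition, and $e_{G_2}(X)+e_{G_2}(Y)=k-1$.

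Since $G_1$ is Eulerian, Corollary~\ref{cor:Eulerian:1/2} applied to $G_1$ with the vertex $z$ gives $H_1\subseteq G_1$ with $d_{H_1}(v)=d_{G_1}(v)/2$ for $v\ne z$ and $d_{H_1}(z)=d_{G_1}(z)/2+\varepsilon$, where $\varepsilon=|E(G_1)|\bmod 2$. Put $f'(v)=[f(v)-d_{H_1}(v)]_k$; a short parity count (using that $d_{G_1}$ is even, and choosing suitably which $k-1$ inner edges are placed into $G_2$) shows that $f'$ is compatible with $G_2$ relative to $(X,Y)$, hence, $G_2$ being $(2k-2)$-edge-connected, compatible with $G_2$ by Theorem~\ref{thm:compatible:sufficient}(3). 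Apply Corollary~\ref{cor:bi:at-most:k-1} to $(G_2,f')$ with $z$ to get an $f'$-factor $H_2$ of $G_2$ with $\lfloor d_{G_2}(v)/2\rfloor-(k-1)\le d_{H_2}(v)\le\lceil d_{G_2}(v)/2\rceil+(k-1)$ for all $v$, the upper bound reduced by one at $z$ if $d_{G_2}(z)$ (equivalently $d_G(z)$) is odd. Then $H:=H_1\cup H_2$ is an $f$-factor, and since $d_{G_1}$ is even, $d_{G_1}(v)/2+\lfloor d_{G_2}(v)/2\rfloor=\lfloor d_G(v)/2\rfloor$ and $d_{G_1}(v)/2+\lceil d_{G_2}(v)/2\rceil=\lceil d_G(v)/2\rceil$; adding the two degree estimates gives the claimed bounds, the only possible loss at $z$ being the ``$+\varepsilon$'' from $H_1$, which the odd-degree reduction on $H_2$ absorbs unless $d_G(z)$ is even and $\varepsilon=1$, in which case $d_H(z)$ may reach $\lfloor d_G(z)/2\rfloor+k$.

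For the ``furthermore'', the forward direction is a parity count: if $H$ is an $f$-factor with $d_H(v)\in[\lfloor d_G(v)/2\rfloor-(k-1),\lceil d_G(v)/2\rceil+(k-1)]$ for all $v$, $G$ is Eulerian with $f(v)\equiv d_G(v)/2\pmod k$, and $k$ is odd, then each $d_H(v)-d_G(v)/2$ is a multiple of $k$ lying in $(-k,k)$, hence $0$, so $2|E(H)|=\sum_v d_H(v)=|E(G)|$ is odd --- impossible. The same count shows that under the \emph{widened} bound $d_H(z)\in[\lceil d_G(z)/2\rceil-k,\lceil d_G(z)/2\rceil+(k-1)]$ the only admissible degree sequence is $d_H(v)=d_G(v)/2$ for $v\ne z$ and $d_H(z)=d_G(z)/2-k$, whose total $|E(G)|-k$ is even because $k$ is odd; this exact $f$-factor is supplied by Lemma~\ref{lem:Lovasz1972} with $g=f$ equal to this sequence, the cut inequality holding by a short edge-count from $(6k-2)$-edge-connectedness when $\omega(G\setminus(A\cup B))\ge 1$, and from $bi(G)\ge k$ (valid in the exceptional case) when $A\cup B=V(G)$. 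For the converse, suppose $G$ is not Eulerian, or $|E(G)|$ is even, or $k$ is even, or $f(v)\not\equiv d_G(v)/2\pmod k$ for some $v$; if $bi(G)\le k-1$ we are done by the first paragraph, and otherwise one runs the decomposition above with $|E(G_1)|$ even --- tracking the parity of $|E(G_1)|$ through the construction against the compatibility relation for $f$, this is obstructed only in the exceptional case --- so $\varepsilon=0$, no loss occurs at $z$, and $d_H(z)\le\lceil d_G(z)/2\rceil+(k-1)$.

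The hard part is the ``furthermore'': determining exactly when the parity of the Eulerian part $G_1$ is forced against the compatibility of $f$, and checking that in the exceptional case Corollary~\ref{cor:bi:at-most:k-1} genuinely cannot deliver the reduced bound --- it is forced, by parity inside $G_2$, to place $d_{H_2}(z)$ at the top of its interval --- so the reduced version must come directly from Lov\'asz's criterion instead of from the decomposition. Everything else is routine bookkeeping of the kind in the proof of Theorem~\ref{thm:3k-3:non-bipartite}.
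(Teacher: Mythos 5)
Your construction of the factor itself is the paper's own argument: split off an Eulerian factor $G_1$ via Corollary~\ref{cor:decomposition:bipartite-index} so that $G_2[X,Y]$ stays $(3k-3)$-edge-connected with $e_{G_2}(X)+e_{G_2}(Y)=k-1$, take the half-factor of $G_1$ from Corollary~\ref{cor:Eulerian:1/2}, shift $f$ by it, check compatibility of $f'$ with $G_2$ via Theorem~\ref{thm:compatible:sufficient}, and finish with Corollary~\ref{cor:bi:at-most:k-1} (the paper invokes Theorem~\ref{thm:non-bipartite:trail} directly, which is the same thing). The bookkeeping of how the ``$+\varepsilon$'' at $z$ interacts with the odd-degree reduction in $G_2$ is also as in the paper, as is the forward parity count in the ``furthermore''.

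There is, however, a genuine gap in your converse direction. You reduce it to the claim that, outside the exceptional case, ``one runs the decomposition above with $|E(G_1)|$ even.'' Nothing in Corollary~\ref{cor:decomposition:bipartite-index} lets you steer the parity of $|E(G_1)|$: there $G_1=T_0\cup M_0\cup F$, where $|E(T_0)|$ and $|E(M_0)|$ are fixed by the construction and the spanning forest $F$ of $T$ is \emph{uniquely determined} by the prescribed degree parities (an edge of a tree lies in $F$ iff one side of the cut it induces has odd total prescribed parity), so $|E(G_1)|\bmod 2$ is whatever it is. You would need a separate argument to flip it, and you give none. The paper avoids this entirely by a case analysis at the end: if $d_G(z)$ is odd then $\lfloor d_G(z)/2\rfloor+k=\lceil d_G(z)/2\rceil+(k-1)$ and the reduction is free; if $f(z)\not\stackrel{k}{\equiv}d_G(z)/2$ the congruence $d_H(z)\stackrel{k}{\equiv}f(z)$ already forbids the top value; and if $k$ is even (or $|E(G)|$ is even) a global parity count on $\sum_v f(v)$ settles it. You should replace your parity-steering claim by an argument of this type. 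Separately, for the last assertion (lower bound relaxed to $\lceil d_G(z)/2\rceil-k$) your direct appeal to Lemma~\ref{lem:Lovasz1972} is much heavier than needed and its cut inequality is only asserted; the paper gets this in one line by applying the theorem to the compatible mapping $d_G-f$ and taking the complement of the resulting factor, which converts the slack $+k$ above $\lfloor d_G(z)/2\rfloor$ into slack $-k$ below $\lceil d_G(z)/2\rceil$. I recommend adopting that complement trick.
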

\begin{proof}
{The special case $k=1$ can be proved by Corollary~\ref{cor:Eulerian:1/2}; see \cite[Problem 41, Page 61]{Lovasz-1979}. More precisely, 
when $G$ is not Eulerian, 
we need to add an artificial vertex $z'$ and join it to all vertices with odd degrees, and this new vertex should play the role of the vertex $z$ in Corollary~\ref{cor:Eulerian:1/2}. 
Moreover, Theorem~\ref{Factor:modulo2:thm:2:edge} confirms the case $k=2$.
So, suppose $k\ge 3$.
If $bi(G)\le k-1$, then the statement follows from 
Corollary~\ref{cor:bi:at-most:k-1} and Theorem~\ref{thm:bipartite:factor:edge-version}.
We may assume that $bi(G)\ge k$.
By Corollary~\ref{cor:decomposition:bipartite-index}, the graph $G$ can be decomposed into two factors $G_1$ and $G_2$ such that $G_1$ is Eulerian and $G_2$ is $(3k-3)$-edge-connected, and $e_{G_2}(X)+e_{G_2}(Y) =k-1$ for a bipartition $X,Y$ of $V(G)$.
According to Corollary~\ref{cor:Eulerian:1/2}, the graph $G_1$ has a factor $F_1$ such that 
for each $v\in V(G)\setminus \{z\}$, $d_{F_1}(v)=d_{G_1}(v)/2$, and $d_{F_1}(z)\in \{d_{G_1}(z)/2, d_{G_1}(z)/2+1\}$. 
For each vertex $v$, define $f'(v)=f(v)-d_{F_1}(v)$ (mod $k$).
Since $f$ is compatible with $G$, $(k-1)\sum_{v\in V(G)}f(v)$ is even and hence $(k-1)\sum_{v\in V(G)}f'(v)$ is even.
Thus by Theorem~\ref{thm:compatible:sufficient}, $f'$ must be compatible with $G_2$.
According to Corollary~\ref{cor:bi:at-most:k-1}, 
the graph $G_2$ has an $f'$-factor $F_2$ such that for each vertex $v$, 
 $\lfloor d_{G_2}(v)/2\rfloor-(k-1) \le d_{F_2}(v) \le \lceil d_{G_2}(v)/2\rceil+(k-1)$, and also
$d_{F_2}(z) \le \lfloor d_{G_2}(z)/2\rfloor+(k-1) $.
Hence $H=F_1\cup F_2$ is the desired factor we are looking for.

Assume that the upper bound cannot be reduced to $\lceil d_G(z)/2\rceil+(k-1)$.
If $d_G(z)$ is odd, then obviously the upper bound is the same number $\lceil d_G(z)/2\rceil+(k-1)$.
Thus we may assume that all vertices of $G$ have even degree (otherwise, we can select a vertex of odd degree for playing the role of $z$). If $f(z)\not \stackrel{k}{\equiv} d_G(z)/2$, then obviously the upper bound can also be reduced to $\lceil d_G(z)/2\rceil+(k-1)$. Thus we may assume that $f(v)\stackrel{k}{\equiv}d_G(v)/2$ for all vertices $v$. 
On the other hand, by applying Corollary~\ref{cor:Eulerian:1/2}, the graph $G$ has a factor $H$ such that 
for each $v\in V(G)\setminus \{z\}$, $d_{H}(v)=d_{G}(v)/2$, and $d_{H}(z)\in \{d_{G}(z)/2, d_{G}(z)/2+1\}$. 
If $|E(G)|$ is even, then $\sum_{v\in V(G)}d_{G}(v)/2$ is even which implies that $d_{H}(z)=d_{G}(z)/2$. Thus $|E(G)|$ must be odd.
If $k$ is even, then $\sum_{v\in V(G)}f(v)$ is even and hence $|E(G)|$ is even, which is a contradiction.
 Therefore, $k$ must be odd.

Conversely, assume that $G$ is an Eulerian graph of odd size and $G$ has an $f$-factor $H$ such that for each vertex $v$, $f(v)\stackrel{k}{\equiv}d_G(v)/2$ and
$\lfloor d_G(v)/2\rfloor -(k-1)\le d_H(v)\le \lceil d_G(v)/2\rceil+k-1$.
 This implies that $d_H(v)=d_G(v)/2$ for each vertex $v$.
Since $\sum_{v\in V(G)}d_H(v)$ is even, $|E(G)|$ must be even, which is a contradiction. Hence the proof is completed.
}\end{proof}
\begin{remark}
{Note that the needed tree-connectivity of Theorem~\ref{thm:high-enough-tree-connectivity} 
can be reduced by one and two for odd and even integers $k$ using a little extra effort.
}\end{remark}
The following corollary is a refined version of Corollary~\ref{cor:k-1/2} for highly tree-connected graphs.
\begin{cor}
{Let $G$ be a graph, let $k$ be a positive integer, and let $f$ be a positive integer-valued function on $V(G)$ satisfying
 $f(v) \le \frac{1}{2}d_G(v)\le f(v)+k$ for each vertex $v$. 
Assume that $f$ is compatible with $G$ (modulo $k$).
If $G$ is $(6k-2)$-tree-connected, then $G$ has a factor $H$ such that for each vertex $v$,
$$d_H(v)\in \{f(v),f(v)+k\}.$$
}\end{cor}
\begin{proof}
{Let $z\in V(G)$. By Theorem~\ref{thm:high-enough-tree-connectivity},
the graph $G$ has an $f$-factor $H$ (mod $k$) such that for each $v\in V(G)\setminus \{z\}$, 
$f(v) -(k-1)\le \lfloor d_G(v)/2\rfloor -(k-1)\le d_H(v) \le \lceil d_G(v)/2\rceil+(k-1)\le f(v)+2k-1$ which implies that 
$d_H(v)\in \{f(v),f(v)+k\}$.
In addition, we can have $f(z) -(k-1)\le \lfloor d_G(z)/2\rfloor -(k-1)\le d_H(z) \le \lceil d_G(z)/2\rceil+k$.
If the last upper bound can be improved by one or $d_G(z)/2\le f(z)+k-1$, then we must also
 have $d_H(z)\in \{f(z),f(z)+k\}$.
Otherwise, $G$ must be Eulerian and $d_G(v)/2=f(v)+k$ for all vertices $v$.
In this case, we first apply Theorem~\ref{thm:high-enough-tree-connectivity} to find a factor $H_0$ such that for each vertex $v$,
$d_{H_0}(v)\in \{d_G(v)-f(v)-k, d_G(v)-f(v)\}$. Next, we set $H=G\setminus E(H_0)$.
Note that the function $d_G(v)-f(v)$ is compatible with $G$ as well. 
More precisely, if there are two integers $x$ and $y$ satisfying $0\le x\le e_G(X)$, 
$0\le y\le e_G(Y)$, and
 $\sum_{v\in X}f(v)-2x \stackrel{k}{\equiv} \sum_{v\in Y}f(v)-2y$, then 
 $\sum_{v\in X}(d_G(v)-f(v))-2(e_G(X)-x) \stackrel{k}{\equiv} \sum_{v\in Y}(d_G(v)-f(v))-2(e_G(Y)-y)$, where $X, Y$ is an arbitrary bipartition of $V(G)$.
Hence the proof is completed.
}\end{proof}
%
%
%
%
%
%
%
%
%
%
%
%
%
%
\section{Modulo $k$-regular factors and subgraphs}
\label{sec:modulo regular}
\subsection{Bipartite modulo $k$-regular factors}

The following well-known theorem gives a sufficient condition for the existence of even factors.
In this subsection, we develop this result for the existence of bipartite modulo $k$-regular factors.
\begin{thm}\rm{(Lov{\'a}sz~\cite[Problem 42, Page 61]{Lovasz-1979}})\label{thm:evenfactor:Eulerian}
{Every $2$-edge-connected loopless graph $G$ with $\delta(G)\ge 3$ admits a modulo $2$-regular factor.
}\end{thm}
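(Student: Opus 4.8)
The statement to prove is Theorem~\ref{thm:evenfactor:Eulerian} (Lovász): every $2$-edge-connected loopless graph $G$ with $\delta(G)\ge 3$ admits a modulo $2$-regular factor, i.e.\ a spanning subgraph $H$ in which every degree is positive and even. The plan is to reduce this to the parity-factor criterion already available as Lemma~\ref{lem:Lovasz1972}, exactly as was done for Theorem~\ref{Factor:modulo2:thm:2:edge}. Concretely, for each vertex $v$ set $f(v)=0\in Z_2$, $g(v)=2$, and $f^{*}(v)=d_G(v)$ if $d_G(v)$ is even, $f^{*}(v)=d_G(v)-1$ if $d_G(v)$ is odd; then $g\le f^{*}$ (using $\delta(G)\ge 3$, so $d_G(v)\ge 3$ forces $f^{*}(v)\ge 2$), and $g(v)\equiv f^{*}(v)\equiv 0\pmod 2$. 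We also need $\sum_v f^{*}(v)$ even; if it is odd we adjust one coordinate by $1$ on a suitably chosen high-degree vertex (still keeping $g\le f^{*}$ and parities), or more cleanly we observe we may work component-by-component and pick $f^{*}$ so the sum is even. A parity $(g,f^{*})$-factor $H$ then has $d_H(v)\ge 2>0$ and $d_H(v)$ even for every $v$, which is precisely a modulo $2$-regular factor.

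The heart of the argument is verifying the Lovász inequality
$$\omega\bigl(G\setminus(A\cup B)\bigr)\le 1+\sum_{v\in A}f^{*}(v)+\sum_{v\in B}\bigl(d_G(v)-g(v)\bigr)-d_G(A,B)$$
for all disjoint $A,B\subseteq V(G)$ with $A\cup B\ne\emptyset$. First I would dispose of the trivial bound: every component $C$ of $G\setminus(A\cup B)$ sends at least one edge to $A\cup B$ (connectivity), and in fact since $G$ is $2$-edge-connected, $C$ sends at least $2$ edges into $A\cup B$ \emph{unless} $C$ together with a single cut structure forces otherwise — this is where $2$-edge-connectedness must be used rather than mere connectedness. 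The clean way: count edges leaving $A\cup B$. We have $\sum_{v\in A\cup B}d_G(v)=2e_G(A\cup B)+ (\text{edges from } A\cup B \text{ to the rest})\ge 2e_G(A\cup B)+2\,\omega(G\setminus(A\cup B))$, the last step because $G$ is $2$-edge-connected so each outside component absorbs $\ge 2$ edges. Hence $\omega(G\setminus(A\cup B))\le \tfrac12\sum_{v\in A\cup B}d_G(v)-e_G(A\cup B)\le \tfrac12\sum_{v\in A\cup B}d_G(v)-d_G(A,B)$, just as in the proof of Theorem~\ref{Factor:modulo2:thm:2:edge}.

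Now I would compare the right-hand side of Lemma~\ref{lem:Lovasz1972}. Since $g(v)=2$, we have $d_G(v)-g(v)=d_G(v)-2$; and $f^{*}(v)\ge d_G(v)-1$. So the target lower bound is at least $1+\sum_{v\in A}(d_G(v)-1)+\sum_{v\in B}(d_G(v)-2)-d_G(A,B)$. Using $\delta(G)\ge 3$ each term $d_G(v)-1\ge \tfrac12 d_G(v)+\tfrac12$ and $d_G(v)-2\ge\tfrac12 d_G(v)-\tfrac12$ (both valid once $d_G(v)\ge 3$), so $\sum_{v\in A}f^{*}(v)+\sum_{v\in B}(d_G(v)-2)\ge \tfrac12\sum_{v\in A\cup B}d_G(v)+\tfrac12(|A|-|B|)$. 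Combined with the bound on $\omega$ above, the inequality $\omega(G\setminus(A\cup B))\le 1+\sum_{v\in A}f^{*}(v)+\sum_{v\in B}(d_G(v)-g(v))-d_G(A,B)$ holds provided $\tfrac12(|A|-|B|)\ge -1$, i.e.\ whenever $|B|\le |A|+2$. The main obstacle is therefore the residual case $|B|$ large relative to $|A|$ (in particular $A=\emptyset$, $B\ne\emptyset$): here I would instead use the sharper $2$-edge-connected count — each component of $G\setminus B$ sends $\ge 2$ edges to $B$, giving $\omega(G\setminus B)\le\tfrac12 d_G(B)\le\tfrac12\sum_{v\in B}d_G(v)-e_G(B)$, and then absorb the deficit into the slack coming from $\delta(G)\ge 3$ (each $v\in B$ contributes $d_G(v)-2\ge d_G(v)/3\ge 1$), together with the ``$+1$'' on the right; a short case check on $|B|=1,2$ (using $2$-edge-connectedness to rule out $\omega=2$ there) finishes it. Finally, I would note the parity adjustment of $f^{*}$ to make $\sum f^{*}$ even changes the estimates by at most $1$ and is absorbed by the same slack.
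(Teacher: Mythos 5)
The paper does not actually prove this statement---it is quoted from Lov\'asz's book without proof---so your attempt can only be judged on its own terms, and there it has a genuine gap, located precisely in the residual case you flag and then defer. The inequality of Lemma~\ref{lem:Lovasz1972} that you must verify, with $g\equiv 2$ and $f^{*}(v)$ the largest even integer at most $d_G(v)$, is simply false for some $2$-edge-connected loopless graphs with $\delta\ge 3$. Take $B=\{b_1,b_2,b_3,b_4\}$ and, for each of the six pairs $\{i,j\}$, a disjoint copy $Q_{ij}$ of $K_4$; join $b_i$ and $b_j$ by single edges to two distinct vertices of $Q_{ij}$. The resulting graph is loopless, $2$-edge-connected, and has minimum degree $3$ (each $b_i$ is cubic and independent of the others). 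With $A=\emptyset$ the right-hand side of the lemma is $1+\sum_{v\in B}(d_G(v)-2)=1+4=5$, while $\omega(G\setminus B)=6$. So the sufficient condition fails, even though this graph does admit a modulo $2$-regular factor (select the attachment edges according to a $4$-cycle on $\{b_1,\dots,b_4\}$ and complete inside each $K_4$ appropriately). In general, for $A=\emptyset$ and $B$ an independent set of cubic vertices, $\omega(G\setminus B)$ can be as large as $\lfloor 3|B|/2\rfloor$ against a right-hand side of $1+|B|$; the deficit grows linearly in $|B|$ and cannot be ``absorbed by the slack from $\delta\ge 3$'' as you propose. (Note also that your estimate $d_G(v)-2\ge d_G(v)/3$ is weaker, not stronger, than the bound $d_G(v)/2-1/2$ you used earlier, so it buys nothing; and since every $f^{*}(v)$ is even, $\sum_v f^{*}(v)$ is automatically even---no adjustment is needed, and an adjustment ``by $1$'' would in any case violate $f^{*}(v)\stackrel{2}{\equiv}g(v)$.)

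The conclusion is that the weakened criterion of Lemma~\ref{lem:Lovasz1972}, which counts \emph{every} component of $G\setminus(A\cup B)$, is too crude for this theorem. A correct proof along these lines needs the full Lov\'asz parity-factor theorem, in which only the components $C$ with $e_G(V(C),B)+\sum_{v\in C}f^{*}(v)$ odd are counted; in the example above every component has even count, so the genuine criterion holds with room to spare. Alternatively, an elementary augmentation argument avoids the deficiency formula altogether: among all spanning subgraphs with every degree even, choose one minimizing the number of isolated vertices, and use two edge-disjoint paths leaving an isolated vertex (available because $G$ is $2$-edge-connected and $d_G(v)\ge 3$) to form a closed trail whose symmetric difference with the current subgraph decreases that number. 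Your setup of $g$ and $f^{*}$ and the bound $\omega(G\setminus(A\cup B))\le\frac12\sum_{v\in A\cup B}d_G(v)-d_G(A,B)$ are fine, but the heart of the proof is missing.
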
 
We begin with the following corollary which provides a bipartite version for Theorem~\ref{thm:evenfactor:Eulerian}.
Note that this result is sharp by considering that there exits a class of $4$-edge-connected graphs without bipartite modulo $2$-regular factors. (For example, consider a number of copies of the complete graph of order four and add a new vertex and join it to all other vertices). 
\begin{cor}
{Every $3$-edge-connected loopless graph $G$ with $\delta(G)\ge 5$ admits 
a bipartite modulo $2$-regular factor.
}\end{cor}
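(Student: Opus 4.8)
The plan is to reduce the statement to the non-bipartite version, Theorem~\ref{thm:evenfactor:Eulerian}, by first passing to a suitable bipartite factor. I would let $H=G[X,Y]$ be a bipartite factor of $G$ with $|E(H)|$ as large as possible, where $(X,Y)$ is the corresponding bipartition of $V(G)$; equivalently, $(X,Y)$ is a maximum cut of $G$. The point is that such an $H$ automatically inherits enough connectivity and minimum degree to be fed into Lov\'asz's theorem.

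Next I would verify that $d_H(A)\ge \lceil d_G(A)/2\rceil$ for every vertex set $A$. This is exactly the local-optimality argument already used in the proof of Theorem~\ref{thm:bipartite:factor}: if $d_H(A)<d_G(A)/2$ for some $A$, then flipping the side of every vertex of $A$, i.e. replacing $(X,Y)$ by $\big((X\setminus A)\cup(A\cap Y),\,(Y\setminus A)\cup(A\cap X)\big)$, strictly increases the cut, contradicting maximality. Applying this with $A=\{v\}$ and using $\delta(G)\ge 5$ gives $d_H(v)\ge \lceil d_G(v)/2\rceil\ge 3$, so $\delta(H)\ge 3$; applying it to an arbitrary nonempty proper subset $A$ of $V(G)$ and using that $G$ is $3$-edge-connected gives $d_H(A)\ge \lceil d_G(A)/2\rceil\ge 2$, so $H$ is connected and in fact $2$-edge-connected. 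Of course $H$ is loopless because $G$ is.

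Then I would simply invoke Theorem~\ref{thm:evenfactor:Eulerian} applied to $H$: being a $2$-edge-connected loopless graph with $\delta(H)\ge 3$, the graph $H$ has a modulo $2$-regular factor $H'$. Finally, $H'$ is spanning in $H$, hence in $G$; it is bipartite as a subgraph of the bipartite graph $H$; and all its degrees are positive and even. Thus $H'$ is the desired bipartite modulo $2$-regular factor of $G$.

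The only real content is the second step — the observation that a maximum-cut bipartite factor of a $3$-edge-connected graph of minimum degree at least $5$ is already $2$-edge-connected with minimum degree at least $3$ — and this is a short consequence of the max-cut flipping argument; everything else is bookkeeping. I do not expect a genuine obstacle here: the hypothesis $\delta(G)\ge 5$ is used precisely to force $\lceil \delta(G)/2\rceil\ge 3$, and $3$-edge-connectivity precisely to force $\lceil d_G(A)/2\rceil\ge 2$, which are exactly the hypotheses of Theorem~\ref{thm:evenfactor:Eulerian}.
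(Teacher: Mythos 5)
Your proposal is correct and follows essentially the same route as the paper: pass to a maximum-cut bipartite factor $H$ satisfying $d_H(A)\ge\lceil d_G(A)/2\rceil$ (this is exactly Theorem~\ref{thm:bipartite:factor}, whose flipping argument you reproduce inline), deduce that $H$ is $2$-edge-connected with $\delta(H)\ge 3$, and apply Theorem~\ref{thm:evenfactor:Eulerian}. The only cosmetic difference is that the paper cites Theorem~\ref{thm:bipartite:factor} rather than re-deriving the cut inequality.
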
 
\begin{proof}
{By Theorem~\ref{thm:bipartite:factor}, the graph $G$ has a bipartite factor $H$ such that for every vertex set $X$,
 $d_H(X)\ge \lceil d_G(X)/2 \rceil$.
Since $G$ is $3$-edge-connected and $\delta(G)\ge 5$, the graph $H$ must $2$-edge-connected and $\delta(H)\ge 3$. Hence by Theorem~\ref{thm:evenfactor:Eulerian} the graph $H$ admits a modulo $2$-regular factor.
}\end{proof}
The following theorem gives sufficient edge-connectivity conditions for the existence of bipartite modulo $k$-regular factors. 
\begin{thm}\label{thm:regular:essentially}
{Every $(4k-1)$-edge-connected essentially $(6k-7)$-edge-connected graph with $k\ge 3$ admits 
a bipartite modulo $k$-regular factor. In addition, this result is true 
for bipartite $2k$-edge-connected essentially $(3k-3)$-edge-connected graphs.
}\end{thm}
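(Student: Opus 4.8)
The plan is to reduce the general-graph statement to the bipartite case by first extracting a sufficiently highly edge-connected bipartite factor, and then applying the bipartite half of the theorem to that factor. For the first claim, suppose $G$ is $(4k-1)$-edge-connected and essentially $(6k-7)$-edge-connected. The idea is to use Theorem~\ref{thm:bipartite:factor}: since a $(4k-1)$-edge-connected graph is $\lfloor (4k-1)/2\rfloor$-tree-connected $=(2k-1)$-tree-connected, which is at least $2k$-tree-connected when... wait — one needs $2m$-tree-connectivity to get an $m$-tree-connected bipartite factor. So instead I would argue directly at the level of edge cuts. Let $H$ be a bipartite factor of $G$ with $|E(H)|$ maximum; as in the proof of Theorem~\ref{thm:bipartite:factor}, the maximality gives $d_H(A)\ge \lceil d_G(A)/2\rceil$ for every vertex set $A$ (otherwise switching $A$ to the other side of the bipartition increases $|E(H)|$). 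Consequently, for every edge cut of $G$ of size $s$ the corresponding cut in $H$ has size at least $\lceil s/2\rceil$. Since $G$ is essentially $(6k-7)$-edge-connected, every cut that is not a vertex-cut has size $\ge 6k-7$, so its image in $H$ has size $\ge \lceil (6k-7)/2\rceil = 3k-3$; and since $G$ is $(4k-1)$-edge-connected, the singleton cuts $E_G(v)$ have size $\ge 4k-1$, so $d_H(v)\ge \lceil(4k-1)/2\rceil = 2k$. Hence $H$ is essentially $(3k-3)$-edge-connected with $\delta(H)\ge 2k$.

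Now I would invoke the bipartite case stated in the theorem (the ``in addition'' clause), but it is cleaner to apply Theorem~\ref{thm:essentially:factor:k} directly: set $k'=2k$ and take $f(v) = k$ for every vertex $v$, so that $[f(v)]_{2k} = k$ and the degree hypothesis $d_H(v)\ge 2k'-1+[f(v)]_{k'} = (4k-1)+k = 5k-1$ is... too strong. So the honest route is to keep the modulus $k$ and pick $f$ so that an $f$-factor (mod $k$) with degrees in the short interval is forced to have all degrees positive multiples of $k$. Concretely, for each vertex $v$ of $H$ set $f(v)\equiv d_H(v)/2 \pmod k$ rounded appropriately — more precisely choose $f(v)\in Z_k$ with $f(v)\equiv$ (the nearest multiple of $k$ to $d_H(v)/2$). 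One checks $f$ is compatible with $H$ relative to its unique bipartition (using that the class-sums of degrees are equal, each $\equiv |E(H)|/2$, hence congruent mod $k$), hence compatible with $H$ by the remark following the definition of compatibility. The degree condition $d_H(v)\ge 2k-1+[f(v)]_k$ holds since $\delta(H)\ge 2k$ and $[f(v)]_k\le k-2<1$... this needs $\delta(H)$ slightly larger, which is why the hypothesis is $4k-1$ and not $2k$; when $d_H(v)\ge 2k$, $\lceil d_H(v)/2\rceil + (k-1) \ge k$ and $\lfloor d_H(v)/2\rfloor - (k-1)$, being within $k-1$ of a multiple of $k$ that is $\ge d_H(v)/2 - k/2 \ge k/2 >0$... so it is a positive multiple of $k$.

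Applying Theorem~\ref{thm:essentially:factor:k} to $H$ with this $f$ yields an $f$-factor $H'\subseteq H$ with $\lfloor d_H(v)/2\rfloor - (k-1)\le d_{H'}(v)\le \lceil d_H(v)/2\rceil + (k-1)$ for every $v$. Since $d_{H'}(v)\equiv f(v)\pmod k$ and $f(v)$ was chosen so that the unique multiple of $k$ in the open interval $(d_H(v)/2 - k, d_H(v)/2 + k)$ of length $2k$ is the residue class of $f(v)$, and since this interval contains exactly one multiple of $k$ which is moreover positive (as $d_H(v)/2 + k > k$ and $d_H(v)/2 - k$ can be negative but the next multiple up is still $\ge$ the one we want), we conclude $d_{H'}(v)$ is a positive multiple of $k$ for every $v$. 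Thus $H'$ is a bipartite modulo $k$-regular factor of $G$. For the ``in addition'' statement, $G$ is already bipartite and $2k$-edge-connected essentially $(3k-3)$-edge-connected, so one skips the extraction step, works directly with $H=G$, and runs the same argument; the edge-connectivity $2k$ gives $\delta(G)\ge 2k$ and the essential $(3k-3)$-edge-connectivity is exactly the hypothesis of Theorem~\ref{thm:essentially:factor:k}. The main obstacle I anticipate is the bookkeeping in choosing $f$ so that the forced residue class, combined with the width-$2k$ interval, pins $d_{H'}(v)$ to a \emph{single} value that is simultaneously a multiple of $k$ and strictly positive for \emph{every} vertex; the positivity at vertices of degree close to $2k$ is exactly what dictates the edge-connectivity bound $4k-1$, and getting that margin right (rather than an off-by-one) is the delicate point.
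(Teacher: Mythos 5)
Your proposal is correct and follows essentially the same route as the paper: extract a maximum bipartite factor $H$ with $d_H(A)\ge\lceil d_G(A)/2\rceil$ (so $H$ is $2k$-edge-connected and essentially $(3k-3)$-edge-connected), then apply Theorem~\ref{thm:essentially:factor:k} and use the lower bound $\lfloor d_H(v)/2\rfloor-(k-1)\ge 1$ for positivity. Note only that your choice of $f$ (``nearest multiple of $k$ to $d_H(v)/2$'') is just $f\equiv 0$ in $Z_k$, which is trivially compatible and satisfies $[f(v)]_k=0$, so the degree hypothesis reads $d_H(v)\ge 2k-1$ and all the bookkeeping you worry about at the end collapses.
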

\begin{proof}
{By Theorem~\ref{thm:bipartite:factor}, the graph $G$ has a bipartite factor $G_0$ such that for every vertex set $X$,
 $d_{G_0}(X)\ge \lceil d_G(X)/2 \rceil $. Since $G$ is $(4k-1)$-edge-connected and essentially $(6k-7)$-edge-connected, the graph $G_0$ must be $2k$-edge-connected and essentially $(3k-3)$-edge-connected.
For each vertex $v$, define $f(v)=0$. Obviously, $f$ is compatible with $G$.
Thus by Theorem~\ref{thm:essentially:factor:k}, 
the graph $G_0$ has an $f$-factor $F$ modulo $k$
such that for each vertex $v$, $d_F(v) \ge \lfloor d_{G_0}(v)/2\rfloor-(k-1)> 0$.
Thus $F$ is a bipartite modulo $k$-regular factor of $G$.
}\end{proof}
The following theorem provides a tree-connected version for Theorem~\ref{thm:regular:essentially}.
\begin{thm}
{Every $(4k-4)$-tree-connected graph with $k\ge 3$ admits a bipartite modulo $k$-regular factor.
In addition, this result is true for bipartite $(2k-2)$-tree-connected graphs.
}\end{thm}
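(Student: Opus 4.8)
The plan is to mirror the structure of the proof of Theorem~\ref{thm:regular:essentially}, replacing the appeal to edge-connectivity-based results by their tree-connected counterparts. First I would invoke Theorem~\ref{thm:bipartite:factor}: since $G$ is $(4k-4)$-tree-connected, it has a $(2k-2)$-tree-connected bipartite factor $G_0$ with $d_{G_0}(A)\ge \lceil d_G(A)/2\rceil$ for every vertex set $A$. This immediately reduces the statement to the ``in addition'' claim, so the real content is to show that every bipartite $(2k-2)$-tree-connected graph admits a bipartite modulo $k$-regular factor; note that such a $G_0$ is automatically non-trivial (a tree-connected graph with $m\ge 1$ has edges) and has $\delta(G_0)\ge 2k-2$, since every vertex lies in $2k-2$ edge-disjoint spanning trees.

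For the bipartite case I would take $f(v)=0$ for every vertex $v$, which is trivially compatible with $G_0$ (both sides of the bipartition sum to $0$ modulo $k$). Then I would apply Theorem~\ref{thm:two-factors:modulo}, which states that a bipartite non-trivial $(2k-2)$-tree-connected graph has an $f$-factor $H$ with $k/2-1\le d_H(v)\le d_G(v)-k/2+1$ for every $v$ (this requires $k\ge 3$; the cases $k=1,2$ are handled separately, $k=1$ being vacuous and $k=2$ following from Theorem~\ref{thm:evenfactor:Eulerian} applied to $G_0$, which is $2$-edge-connected with $\delta\ge 2$ — though one must be slightly careful and instead use the $f=0$, $k=2$ instance of an even-factor result, or simply note $(2k-2)$-tree-connected with $k=2$ means $2$-tree-connected hence $4$-edge-connected). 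The key point is that the lower bound $d_H(v)\ge k/2-1$ is not quite enough to force $d_H(v)>0$ when $k\le 2$, but for $k\ge 3$ we get $d_H(v)\ge k/2-1\ge 1/2>0$, hence $d_H(v)\ge 1$; combined with $d_H(v)\stackrel{k}{\equiv}0$ this gives $d_H(v)\ge k>0$, so $H$ is modulo $k$-regular and bipartite (being a factor of the bipartite graph $G_0$).

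The main obstacle I anticipate is ensuring that the degree lower bound from Theorem~\ref{thm:two-factors:modulo} genuinely rules out degree-zero vertices, i.e.\ that $\delta(H)\ge 1$: the bound $d_H(v)\ge k/2-1$ is strictly positive precisely when $k\ge 3$, and one needs the congruence $d_H(v)\equiv 0\pmod k$ together with positivity to upgrade this to $d_H(v)\ge k$. A secondary technical point is verifying that $G_0$ inherits non-triviality and the minimum-degree bound needed to even apply Theorem~\ref{thm:two-factors:modulo}; this is routine since $(2k-2)$-tree-connectivity with $k\ge 2$ forces at least two edge-disjoint spanning trees, so $G_0$ is connected with at least $|V(G_0)|-1\ge 1$ edges (and if $|V(G_0)|=1$ the statement is vacuous as there is nothing to make regular). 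Finally I would remark that the full first statement follows because the bipartite factor $G_0$ produced by Theorem~\ref{thm:bipartite:factor} is itself a factor of $G$, so a modulo $k$-regular factor of $G_0$ is one of $G$.
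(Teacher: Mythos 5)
Your proof is correct and follows essentially the same route as the paper's: extract a $(2k-2)$-tree-connected bipartite factor via Theorem~\ref{thm:bipartite:factor} and then apply Theorem~\ref{thm:two-factors:modulo} with $f\equiv 0$, using the lower bound $k/2-1>0$ together with $d_H(v)\stackrel{k}{\equiv}0$ to force $d_H(v)\ge k$. The only slip is the parenthetical claim that $2$-tree-connected implies $4$-edge-connected (it only implies $2$-edge-connected), but that occurs in the $k=2$ case, which the paper's own two-line proof also leaves implicit and which your even-factor fallback covers anyway.
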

\begin{proof}
{If $G$ is $(4k-4)$-tree-connected, then by Theorem~\ref{thm:bipartite:factor}, the graph $G$ contains
a $(2k-2)$-tree-connected bipartite factor $G_0$. By Theorem~\ref{thm:two-factors:modulo},
the bipartite graph $G_0$ contains
a modulo $k$-regular factor, which can complete the proof.
}\end{proof}
Finally, we formulate the following improved version of Theorem 3 in~\cite{Thomassen-2014}.
\begin{thm}\label{thm:moduloregular:edgeversion}
{Every $(10k-3)$-edge-connected essentially $(12k-7)$-edge-connected graph of even order admits a modulo $k$-regular factor whose degrees are not divisible by $2k$. In addition, this result is true for bipartite $(5k-1)$-edge-connected essentially $(6k-3)$-edge-connected graphs of even order.
}\end{thm}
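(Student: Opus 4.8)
The plan is to mimic the structure of the proof of Theorem~\ref{thm:regular:essentially}, but now starting from a modulo $2k$-regular-type factor rather than a plain modulo $k$-regular one, so that the resulting degrees can be forced to lie in a residue class that is nonzero modulo $2k$. First I would handle the non-bipartite case. By Theorem~\ref{thm:bipartite:factor}, $G$ has a bipartite factor $G_0$ with $d_{G_0}(A)\ge\lceil d_G(A)/2\rceil$ for every vertex set $A$; since $G$ is $(10k-3)$-edge-connected and essentially $(12k-7)$-edge-connected, $G_0$ is $(5k-1)$-edge-connected and essentially $(6k-3)$-edge-connected. Moreover $G$ has even order, hence $G_0$ has even order. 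This reduces the general statement to the bipartite statement, so the work is really in the ``In addition'' clause.

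For the bipartite case, let $G_0$ be $(5k-1)$-edge-connected, essentially $(6k-3)$-edge-connected, of even order, with bipartition $(X,Y)$. The idea is to choose the target mapping $f:V(G_0)\to Z_{2k}$ cleverly so that an $f$-factor (modulo $2k$) with degrees in the short interval around $d_{G_0}(v)/2$ automatically has all degrees positive and \emph{not} divisible by $2k$, yet divisible by $k$. Concretely, for each vertex $v$ I would set $f(v)\in\{0,k\}\pmod{2k}$ according to the parity of $\lfloor d_{G_0}(v)/2\rfloor$ or some similar parity bookkeeping, and then verify that $f$ is compatible with $G_0$; because $G_0$ is bipartite with a unique bipartition (by Lemma~\ref{lem:unique-bipartition}, using that $G_0$ is highly edge-connected), compatibility reduces to checking $\sum_{v\in X}f(v)\stackrel{2k}{\equiv}\sum_{v\in Y}f(v)$, which the even-order hypothesis together with the parity choice of $f$ should give — this is exactly the kind of parity-counting argument carried out in the proof of Corollary~\ref{cor:Eulerian:even-k} and the odd-$k$ corollary after Corollary~\ref{cor:bipartite:factor}. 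Then I would invoke Theorem~\ref{thm:essentially:factor:k} with $k$ replaced by $2k$: one needs $G_0$ essentially $(6k-3)$-edge-connected, which is $(3(2k)-3)$-edge-connected, and $d_{G_0}(v)\ge 2(2k)-1+[f(v)]_{2k}$, i.e. $d_{G_0}(v)\ge 4k-1$ or $\ge 5k-1$ depending on whether $f(v)$ corresponds to residue $0$ or $k$; the $(5k-1)$-edge-connectivity supplies exactly this minimum-degree bound. The theorem then yields a factor $F$ with $d_F(v)\stackrel{2k}{\equiv}f(v)$ and $\lfloor d_{G_0}(v)/2\rfloor-(2k-1)\le d_F(v)\le\lceil d_{G_0}(v)/2\rceil+(2k-1)$, so in particular $d_F(v)>0$, and by the residue choice $d_F(v)\equiv 0\pmod k$ but $d_F(v)\not\equiv 0\pmod{2k}$.

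The step I expect to be the main obstacle is the compatibility/parity bookkeeping: one must pick the residues $f(v)\in\{0,k\}$ so that simultaneously (a) $\sum_{X}f(v)\equiv\sum_{Y}f(v)\pmod{2k}$, which forces the number of vertices assigned residue $k$ in $X$ and in $Y$ to have equal parity, and (b) for \emph{every} vertex $v$ for which $d_{G_0}(v)$ could fail the ``not divisible by $2k$'' conclusion, the assigned residue genuinely excludes the bad value — this is automatic once $d_F(v)$ lands in an interval of length $<2k$ around $d_{G_0}(v)/2$ only if that interval contains at most one multiple of $2k$, which it does, but one still has to make sure it is $f(v)$ and not $f(v)$'s ``other'' representative that is hit. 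The cleanest route, parallel to Corollary~\ref{cor:Eulerian:even-k}, is to take $f(v)\equiv \lceil d_{G_0}(v)/2\rceil + k \pmod{2k}$ for a suitable half of the vertices and adjust on the other half so the global sum condition holds, using the even-order hypothesis to absorb the parity discrepancy; then $d_F(v)$ differs from $\lceil d_{G_0}(v)/2\rceil$ by less than $2k$ and is congruent to $\lceil d_{G_0}(v)/2\rceil+k$, so it equals $\lceil d_{G_0}(v)/2\rceil\pm k$ (or $\lfloor\cdot\rfloor\pm k$), all of which are positive and $\not\equiv 0\pmod{2k}$. Finally I would note, as in the remark-style observations elsewhere in the paper, that the tree-connected analogue follows verbatim by replacing Theorem~\ref{thm:essentially:factor:k} with Theorem~\ref{thm:two-factors:modulo} applied with parameter $2k$.
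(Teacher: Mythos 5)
Your overall skeleton matches the paper's proof exactly: extract a bipartite factor $G_0$ via Theorem~\ref{thm:bipartite:factor} (which is then $(5k-1)$-edge-connected, essentially $(6k-3)$-edge-connected, and of even order), and apply Theorem~\ref{thm:essentially:factor:k} with modulus $2k$, checking the degree condition $d_{G_0}(v)\ge 2(2k)-1+[f(v)]_{2k}$. The gap is in the choice of the target mapping $f$, and it is not cosmetic. A degree is divisible by $k$ but not by $2k$ if and only if it is congruent to $k$ modulo $2k$; that is the \emph{unique} admissible residue class. Both variants you offer violate this. If you assign $f(v)\equiv 0\pmod{2k}$ to any vertex, that vertex of the resulting factor has degree divisible by $2k$, contradicting the conclusion. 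If instead you take $f(v)\equiv\lceil d_{G_0}(v)/2\rceil+k\pmod{2k}$ (your ``cleanest route''), the resulting degree is congruent to $\lceil d_{G_0}(v)/2\rceil+k$, which in general is not divisible by $k$ at all, so the factor is not even modulo $k$-regular. The parity bookkeeping you anticipate as the main obstacle is a symptom of having left yourself a spurious degree of freedom in the residues.

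The fix is simpler than anything you propose and is what the paper does: set $f(v)=k\pmod{2k}$ for \emph{every} vertex. Then compatibility with respect to the bipartition $(X,Y)$ of $G_0$ reads $k|X|\stackrel{2k}{\equiv}k|Y|$, i.e.\ $|X|\stackrel{2}{\equiv}|Y|$, which is exactly the even-order hypothesis; $[f(v)]_{2k}=k$ gives the required minimum degree $5k-1$, supplied by the $(5k-1)$-edge-connectivity; and every degree of the resulting factor is congruent to $k$ modulo $2k$, hence automatically positive, divisible by $k$, and not divisible by $2k$, with no interval or parity analysis needed.
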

\begin{proof}
{By Theorem~\ref{thm:bipartite:factor}, the graph $G$ has a bipartite factor $G_0$ such that for every vertex set $X$,
 $d_{G_0}(X)\ge \lceil d_G(X)/2 \rceil $. Since $G$ is $(10k-3)$-edge-connected and essentially $(12k-7)$-edge-connected, the graph $G_0$ must be $(5k-1)$-edge-connected and essentially $(6k-3)$-edge-connected.
For each vertex $v$, define $f(v)=k$ (mod $2k$). Since $G_0$ has even order, $f$ must be compatible with $G_0$.
Thus by Theorem~\ref{thm:essentially:factor:k}, 
the graph $G_0$ has an $f$-factor $F$ modulo $2k$. Note that for each vertex $v$, we have $d_{G_0}(v)\ge 5k-1= 2(2k)-1+[f(v)]_{2k}$. Thus $F$ is the desired factor of $G$.
}\end{proof}
\subsection{Bipartite modulo $k$-regular subgraphs}
The following theorem completely confirms Conjecture~\ref{intro:conj} for prime powers.
We shall below replace another condition for the existence of bipartite modulo $k$-regular subgraphs.
\begin{thm}{\rm (\cite{Alon-Friedland-Kalai-1984})}\label{thm:AFK:k-subgraph}
{Let $G$ be a loopless graph of order $n$ and let $q$ be a prime power.
Then $G$ admits a modulo $q$-regular subgraph if 
$$|E(G)|> (q-1)n.$$
In addition, the lower bound can be improved to $(q-1)(n-1)$ when $G$ is bipartite.
}\end{thm}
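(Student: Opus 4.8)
The plan is to use the polynomial method, concretely the Chevalley--Warning theorem over the prime field $\mathbb{F}_p$, where $q=p^r$. Recall that if $g_1,\dots,g_s\in\mathbb{F}_p[x_1,\dots,x_\ell]$ satisfy $\sum_i\deg g_i<\ell$, then the number of common zeros of $g_1,\dots,g_s$ in $\mathbb{F}_p^{\,\ell}$ is divisible by $p$; in particular, if the all-zero vector is a common zero, a nonzero one also exists. I would attach a variable $x_e\in\mathbb{F}_p$ to each edge $e\in E(G)$ and, for a given assignment, set $y_e:=x_e^{\,p-1}\in\{0,1\}$ and $H:=\{e:x_e\neq 0\}$, so that $d_H(v)=\sum_{e\ni v}y_e$ counts the edges of $H$ at $v$ as an honest nonnegative integer. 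By Lucas' theorem, $q\mid d_H(v)$ if and only if the $r$ lowest base-$p$ digits of $d_H(v)$ all vanish, i.e. $\binom{d_H(v)}{p^i}\equiv 0\pmod p$ for $0\le i\le r-1$; and since the $y_e$ are genuinely $\{0,1\}$-valued, $\binom{d_H(v)}{p^i}$ is exactly the elementary symmetric polynomial $\sigma_{p^i}$ evaluated at $\{y_e:e\ni v\}$.

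The first step is then to introduce, for each vertex $v$ and each $i\in\{0,\dots,r-1\}$, the polynomial $f_{v,i}:=\sigma_{p^i}\!\big(\{x_e^{\,p-1}:e\ni v\}\big)\in\mathbb{F}_p[x]$, which has degree $p^i(p-1)$. These all vanish at $x=0$, and their degrees sum to $\sum_{i=0}^{r-1}p^i(p-1)=p^r-1=q-1$ per vertex, hence to $(q-1)n$ over all $n$ vertices. So when $|E(G)|>(q-1)n$, Chevalley--Warning produces a nonzero common zero, and the associated $H$ is a nonempty subgraph with $q\mid d_H(v)$ for every $v$, i.e. a modulo $q$-regular subgraph. (For $p=2$ one has $y_e=x_e$ and uses the $\sigma_{2^i}$ directly; for $r=1$ this reduces to the single polynomial $\sum_{e\ni v}x_e^{\,p-1}$ of degree $p-1$.)

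For the bipartite refinement I would fix the bipartition $(X,Y)$ and a vertex $y_0\in Y$ and impose the $f_{v,i}$ only for $v\neq y_0$, so that the total degree falls to $(q-1)(n-1)$. A nonzero solution gives a nonempty $H$ with $q\mid d_H(v)$ for all $v\neq y_0$, and the handshake identity $\sum_{v\in X}d_H(v)=|E(H)|=\sum_{v\in Y}d_H(v)$ then forces $d_H(y_0)=\sum_{v\in X}d_H(v)-\sum_{v\in Y\setminus\{y_0\}}d_H(v)\equiv 0\pmod q$ for free; thus $|E(G)|>(q-1)(n-1)$ suffices. For $q$ a power of two one squeezes out a further half-vertex: the polynomial identity $\sum_v\sigma_1(\{y_e:e\ni v\})=2\sum_e y_e$ vanishes over $\mathbb{F}_2$, so one degree-$1$ relation is redundant, and---using that $|E(H)|$, and hence $\sum_v d_H(v)$, is even---this redundancy can be leveraged to also discard the top-degree symmetric polynomial $\sigma_{q/2}$ at a single vertex, bringing the total degree down to $(q-1)(n-1/2)$.

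The degree bookkeeping and the Lucas translation are routine; the delicate step is the last one. After deleting a top-degree polynomial at a vertex $v_0$ one is only guaranteed $d_H(v_0)\in\{0,q/2\}\pmod q$, so the argument must show the residual ambiguity is genuinely killed by the parity constraints (as it plainly is when $q=2$, since then $q/2$ is odd) rather than merely halved; making this work uniformly for all powers of two is where I expect the real effort to be.
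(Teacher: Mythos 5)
The paper does not actually prove this statement: it is quoted from Alon, Friedland, and Kalai \cite{Alon-Friedland-Kalai-1984} and used as a black box, so there is no internal proof to compare against. Your reconstruction of the first two claims is correct and is essentially the original argument: encoding membership by $y_e=x_e^{p-1}$, using Lucas' theorem to translate $q\mid d_H(v)$ into the vanishing of $\binom{d_H(v)}{p^i}=\sigma_{p^i}\bigl(\{y_e:e\ni v\}\bigr)$ for $0\le i\le r-1$, and applying Chevalley--Warning with total degree $(q-1)n<|E(G)|$ yields the first bound; dropping all $r$ constraints at a single vertex of one side of the bipartition and recovering them from the handshake identity is exactly the right way to get $(q-1)(n-1)$ in the bipartite case.

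The third claim (the bound $(q-1)(n-1/2)$ when $q$ is a power of two --- the ``$k$'' in the statement is evidently a typo for ``$q$'') is not proved, and you say as much. The redundancy $\sum_v\sigma_1(\{x_e:e\ni v\})=2\sum_e x_e=0$ over $\mathbb{F}_2$ saves exactly one unit of degree, which matches $(q-1)(n-1/2)$ only when $q=2$. For $q=2^r$ with $r\ge 2$ your plan of additionally discarding $\sigma_{q/2}$ at one vertex $v_0$ leaves $d_H(v_0)$ controlled only modulo $q/2$, i.e.\ $d_H(v_0)\in\{0,q/2\}\pmod q$, and the parity of $\sum_v d_H(v)=2|E(H)|$ gives no leverage against the residue $q/2$, which is itself even for $r\ge 2$. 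Moreover the saving $1+q/2$ overshoots the $(q-1)/2$ that the claimed bound calls for, a sign that this particular relaxation discards more information than the statement can afford. So the first two parts of your argument stand on their own; the power-of-two refinement remains a genuine gap, and for the purposes of this paper only the bipartite bound $(q-1)(n-1)$ is ever invoked (in the final corollary), so that gap does not affect anything downstream.
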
 
\begin{lem}{\rm (\cite{Hoffman})}\label{lem:kf-factor}
{Let $k$ and $q$ be two positive integers with $k\le q$. 
Let $G$ be a bipartite graph and let $f$ be a nonnegative integer-valued function on $V(G)$.
If $G$ has a factor $H$ satisfying $d_H(v)=qf(v)$ for each vertex $v$, then $G$ has a factor $F$ satisfying $d_F(v)=kf(v)$ for each vertex $v$.
}\end{lem}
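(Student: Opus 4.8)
The plan is to exploit the bipartite structure together with an orientation argument. Write $(X,Y)$ for a bipartition of $G$ (we may assume $G$ is connected, treating each component separately), and let $H$ be the given factor with $d_H(v)=qf(v)$ for every vertex $v$. The key observation is that since $q\ge k$ we have $qf(v)=kf(v)+(q-k)f(v)$, so the target degree $kf(v)$ differs from $d_H(v)$ by a multiple of $q-k$; but more to the point, $kf(v)$ is obtained from $qf(v)$ by a uniform scaling, which strongly suggests building $F$ \emph{inside} $H$ by selecting, at each vertex, a $k/q$ fraction of the incident edges of $H$. To make the selection consistent across edges, I would orient a suitable auxiliary graph.

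First I would reduce to the case $f\equiv 1$ on the support: delete from $H$ all vertices $v$ with $f(v)=0$ (these have $d_H(v)=0$, so they are isolated in $H$ and irrelevant), and then look for a factor $F$ of the remaining graph $H'$ with $d_F(v)=k$ for every vertex of $H'$ whose $H$-degree is $q$. If $f$ is not identically $1$ one can instead split each vertex $v$ into $f(v)$ copies, distributing the $qf(v)$ incident edges of $H$ into $f(v)$ groups of size $q$ (possible since $H$ restricted to the support is, after this splitting, $q$-regular); a factor that is $k$-regular on the split graph pulls back to the desired $F$. So the heart of the matter is: \emph{a $q$-regular bipartite graph has a $k$-regular factor for every $k\le q$}. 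This is a classical consequence of König's edge-colouring theorem — a $q$-regular bipartite (multi)graph decomposes into $q$ perfect matchings, and the union of any $k$ of them is a $k$-regular factor.

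The steps in order, then, are: (1) discard the vertices where $f=0$; (2) on the remaining graph, perform the vertex-splitting that turns $H$ into a $q$-regular bipartite multigraph $H^\ast$ (keeping track of which edges of $H^\ast$ correspond to which edges of $H$, so that $H^\ast$ is genuinely a multigraph with the same edge set as $H$, just with vertices subdivided into clones); (3) apply König's theorem to decompose $E(H^\ast)$ into $q$ perfect matchings $M_1,\dots,M_q$; (4) set $F^\ast=M_1\cup\cdots\cup M_k$, a $k$-regular spanning subgraph of $H^\ast$; (5) identify the clones back together to obtain a factor $F\subseteq H\subseteq G$ with $d_F(v)=kf(v)$ for every $v$ in the support and $d_F(v)=0$ when $f(v)=0$, as required.

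I expect the only genuine obstacle to be the bookkeeping in step (2): one must check that the $qf(v)$ edges of $H$ at a vertex $v$ can indeed be partitioned into $f(v)$ blocks of $q$ edges in a way that is globally coherent (i.e.\ does not over-commit a shared edge), and that the resulting object $H^\ast$ is bipartite. Bipartiteness is automatic because each clone inherits the side ($X$ or $Y$) of its original vertex and $H\subseteq G$ keeps the bipartition. Coherence of the blocking is immediate: each edge $uv$ of $H$ simply gets assigned, independently at its two endpoints, to one clone of $u$ and one clone of $v$; any assignment works as long as each clone of $u$ receives exactly $q$ of the $qf(u)$ edges, which is a trivial partition of a set. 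Everything else is a direct invocation of König's theorem, so no delicate estimates arise; the statement is essentially a packaging lemma, and the proof is short once the reduction to $q$-regularity is in place.
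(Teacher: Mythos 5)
Your proposal is correct and follows essentially the same route as the paper: split each vertex $v$ into $f(v)$ clones to turn $H$ into a $q$-regular bipartite multigraph, apply K\H{o}nig's theorem to extract a $k$-regular factor, and contract the clones back. The paper states this in three lines; your version merely adds the (harmless) bookkeeping about $f(v)=0$ vertices and the explicit matching decomposition.
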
 
\begin{proof}
{We split every vertex $v$ of $H$ into $f(v)$ vertices such that the resulting graph $H_0$ would be a $q$-regular bipartite graph. Thus $H_0$ has a $k$-regular factor $F_0$ using K{\"o}nig's Theorem~\cite{Konig}. Obviously, this factor $F_0$ induces a factor $F$ of $H$ satisfying $d_F(v)=kf(v)$ for each vertex $v$. Hence the proof is completed.
}\end{proof} 
The following theorem is an improved version of Lemma 3 in \cite{Botler-Colucci-Kohayakawa}.
By a computer search, we observed that for small positive integers $k$ (at most nine digits), there are some prime powers less than $(1+1/10)k+1$. On the other hand, Baker, Harman, and Pintz (2001)~\cite{Baker-Harman-Pintz-2001} proved that for every sufficiently large integer $x$, there is a prime number $p\in [x-x^{0.525}, x]$. This shows that the following lower bound can be replaced by $(2+\varepsilon)(k -1)(n-1)$
for sufficiently large $k$.
\begin{thm}\label{thm:qk}
{A loopless graph $G$ of order $n$ has a bipartite modulo $k$-regular subgraph if $\chi(G)\le 2t$ and 
$$|E(G)|> (2-\frac{1}{t})(q(k)-1)(n-1),$$
where $q(k)$ denotes the smallest prime power with $ q(k) \ge k$. Consequently, 
if $|E(G)|> (2q(k)-\frac{5}{2})(n-1)$ or $|E(G)|> (4k-6)(n-1)$, then $G$ has a bipartite modulo $k$-regular subgraph.
}\end{thm}
\begin{proof}
{We may assume that $k\ge 2$. We first show that the graph $G$ has a bipartite factor $H$ such that $|E(H)|\ge \frac{t}{2t-1}|E(G)|$, see \cite{Andersen-Grant-Linial-1983}. 
Let $X_1,\ldots, X_{2t}$ be a partition of $V(G)$ such that every $G[X_i]$ has no edge.
Let $S$ be a subset of $1,\ldots, 2t$ with size $t$, and 
let $H_S$ be the bipartite factor of $G$ with one partite set $\cup_{i\in S} X_i$.
The number of such factors is obviously $\binom{2t}{t}$. On the other hand, every edge is contained in exactly
$2\binom{2t-2}{t-1}$ such factors. Among all such factors, we consider $H$ with the maximum size.
Thus $\binom{2t}{t} |E(H)|\ge 2\binom{2t-2}{t-1}|E(G)|$ which implies that
$|E(H)|\ge \frac{t}{2t-1}|E(G)|> (q(k)-1)(n-1)$.
Thus by Theorem~\ref{thm:AFK:k-subgraph}, the graph $H$ has a modulo $q(k)$-regular subgraph $F$.
According to Lemma~\ref{lem:kf-factor}, the bipartite graph $F$ has a modulo $k$-regular factor and so the graph $G$ has a bipartite modulo $k$-regular subgraph.
 
Let us prove the first conclusion. Suppose, to the contrary, $G$ has no bipartite modulo $k$-regular subgraph and $|E(G)|> (2q(k)-5/2)(n-1)$. Thus every subgraph $G'$ of $G$ contains at most $2(q(k)-1)|V(G')|-1$ edges, and so it has minimum degree at most $4q(k)-5$. Therefore, one can conclude that the graph $G$ its chromatic number is at most $4q(k)-4$ (using a greedy algorithm with the fact that $G$ is $(4q(k)-5)$-degenerate). Thus the graph $G$ contains at most $(2-\frac{1}{2q(k)-2})(q(k)-1)(n-1)$ edges, which is a contradiction. For proving the second conclusion, it is enough to check that $q(k) \le 2^{i+1}\le 2k-2$ 
when $k=2^i+r$ and $0< r<2^i$. Hence the proof is completed.
}\end{proof} 
 When $k=3$ and $t=2$, Theorem~\ref{thm:qk} becomes simpler as the following version. Note that the graph $G$ obtained from the Cartesian product of two cycles of order three by joining its vertices to a new vertex is a $4$-chromatic simple graph of size $3(|V(G)|-1)$ having no bipartite modulo $3$-regular subgraph.
\begin{cor}\label{cor:k=3}
{A loopless graph $G$ of order $n$ has a bipartite modulo $3$-regular subgraph if $|E(G)|> (3+\frac{1}{2})(n-1)$, or
$\chi(G)\le 4$ and $|E(G)|> 3(n-1)$.
}\end{cor}
Finally, we propose the following conjecture to introduce a sharp version of Corollary~\ref{cor:k=3}. Note also the graph $G$ obtained from the complete graph of order five by inserting a new copy of a triangle is a graph of size $(3+\frac{1}{4})(|V(G)|-1)$ having no bipartite modulo $3$-regular subgraph.
\begin{conj}
{A loopless graph $G$ of order $n$ has a bipartite modulo $3$-regular subgraph if $|E(G)|> (3+\frac{1}{4})(n-1)$, or $G$ is simple and $|E(G)|> 3(n-1)$. 
}\end{conj}
%
%
%
%
%
%
%
%
%

\end{document}